\newtheorem{theorem}{Theorem}[section]
\newtheorem{lemma}[theorem]{Lemma}
\newtheorem{proposition}[theorem]{Proposition}
\newtheorem{example}[theorem]{Example}
\newtheorem{corollary}[theorem]{Corollary}
\newtheorem{conjecture}[theorem]{Conjecture}
\theoremstyle{definition}
\newtheorem{remark}[theorem]{Remark}
\newtheorem*{acknowledgments}{Acknowledgments}
\def\ev{\mathrm{ev}}
\def\deg{\mathop{\mathrm{deg}}\nolimits}
\def\cl{\mathop{\mathrm{cl}}\nolimits}
\def\tr{\mathop{\mathrm{tr}}\nolimits}
\def\id{\mathop{\mathrm{id}}\nolimits}
\def\ev{\mathop{\mathrm{ev}}\nolimits}
\def\im{\mathop{\mathrm{Im}}\nolimits}
\def\Span{\mathop{\mathrm{Span}}\nolimits}
\def\Span{\mathop{\mathrm{Span}}\nolimits}
\def\co{\colon\thinspace}
\newcommand{\cyc}[2]{ \lfloor \frac{#1}{#2}\rfloor}
\newcommand{\uqenh}[1]{ (\bar U_q^{\ev})\,\hat  {}^{\;\hat  \otimes #1}}
\newcommand{\uqen}[1]{ (\bar U_q^{\ev})\;\tilde {}^{\;\tilde \otimes #1}}
\newcommand{\uqe}{\bar U_q^{\ev}}
\newcommand{\uq}{\bar U_q}
\newcommand{\muq}{\mathcal{ U}_q}
\newcommand{\tmuq}{\tilde{\mathcal{ U}}_q}
\newcommand{\muqe}{\mathcal{ U}_q^{\ev}}
\newcommand{\tmuqe}{\tilde{\mathcal{ U}}_q^{\ev}}
\newcommand{\tmuqen}[1]{(\tilde{\mathcal{ U}}_q^{\ev})^{\tilde \otimes {#1}}}
\newcommand{\uqzq}{U_{\mathbb{Z},q}}
\newcommand{\uqzqe}{U_{\mathbb{Z},q}^{\ev}}
\newcommand{\f}[1]{\tilde F^{({#1})}}
\newcommand{\e}[1]{\tilde E^{({#1})}}
\newcommand{\Z}{\mathbb{Z}[q,q^{-1}]}
\begin{document}
\title{On the universal $sl_2$ invariant of Brunnian bottom tangles}
\author{Sakie Suzuki\thanks{Research Institute for Mathematical Sciences, Kyoto
University, Kyoto, 606-8502, Japan. E-mail address: \texttt{sakie@kurims.kyoto-u.ac.jp}} }
\date{November 27, 2011}
\maketitle
\begin{center}
\textbf{Abstract}
\end{center}
A link $L$ is called Brunnian if every proper sublink of $L$ is trivial. 
Similarly, a bottom tangle $T$ is called Brunnian if every proper subtangle of $T$ is trivial.
In this paper, we give a small subalgebra of the $n$-fold completed tensor power of $U_h(sl_2)$ in which the universal $sl_2$ invariant of $n$-component Brunnian bottom tangles takes values.
As an application, we  
give a divisibility property of  the colored Jones polynomial of  Brunnian links.
\section{Introduction}
The universal invariant of tangles associated with a ribbon Hopf algebra  \cite{H2,He,Ka,KR,R2, R1,O,Re} has the universality property for the colored link invariants which are defined by Reshetikhin and Turaev \cite{Re}.

The  universal $sl_2$ invariant $J_T$ of an $n$-component bottom tangle $T$ takes values in the $n$-fold completed tensor powers $U_h(sl_2)^{\hat \otimes n}$ of $U_h(sl_2)$, and we can obtain the colored Jones polynomial of the closure link $\cl(T)$  from $J_T$ by taking the quantum traces.
Here, a  \textit{bottom tangle} is a tangle in a cube consisting of  only arc components such that each boundary point is on the bottom and the two boundary points of each arc are adjacent to each other,  see  Figure \ref{fig:closure} (a) for example.  The closure of a bottom tangle is defined as in Figure \ref{fig:closure} (b).
\begin{figure}
\centering
\includegraphics[width=9cm,clip]{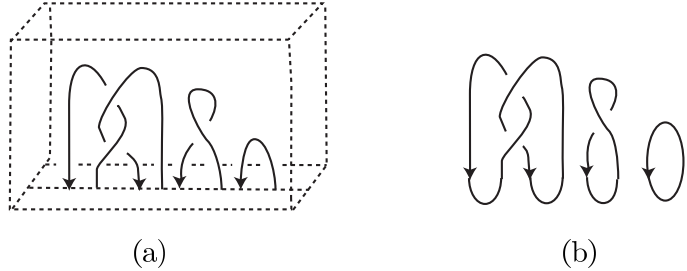}
\caption{ (a) A bottom tangle $T$, (b)  The closure link $\cl(T)$ of $T$}\label{fig:closure}
\end{figure}%
 
Our interest is in the relationship between \textit{topological properties} of tangles and links and
\textit{algebraic properties}  of the universal $sl_2$ invariant and the colored Jones polynomial. 
Habiro \cite{H2} proved that the universal $sl_2$ invariant of  $n$-component,
 algebraically-split, $0$-framed bottom tangles takes values in a  subalgebra $\tmuqen{n}$ of $U_h(sl_2)^{\hat \otimes n}$
(Theorem \ref{h0}).
The present author proved  improvements of  this result with a smaller subalgebra $\uqenh{n}\subset \tmuqen{n}$  in the special case of ribbon bottom tangles \cite{sakie1} and boundary bottom tangles \cite{sakie2} (Theorem \ref{s0}).
Here, the result for boundary bottom tangles had been conjectured by Habiro \cite{H2}.

A link $L$ is called \textit{Brunnian} if  every proper sublink of $L$ is  trivial.
Similarly, a bottom tangle $T$ is called \textit{Brunnian} if every proper subtangle of $T$ is \textit{trivial}, i.e.,  looks like $\cap \cdots \cap$.
Habiro \cite[Proposition 12]{H5} proved that for every Brunnian link $L$, there is a Brunnian bottom tangle whose closure is isotopic to $L$.

In the present paper, we give a subalgebra $U_{Br}^{(n)}$ of $U_h(sl_2)^{\hat \otimes n}$ such that $\uqenh{n}\subset U_{Br}^{(n)}\subset \tmuqen{n}$ in which the universal $sl_2$ invariant of $n$-component  Brunnian bottom tangles takes values (Theorem \ref{1}). 
As an application, we prove a divisibility property of  the colored Jones polynomial of Brunnian links (Theorem \ref{4}).

The rest of this paper is organized as follows.
 In Section \ref{sect1}, we recall basic facts of the quantized enveloping algebra $U_h(sl_2)$.
In Section \ref{sect2}, we define the universal $sl_2$ invariant of bottom tangles.
In Section \ref{sect3}, we give the main result for the universal $sl_2$ invariant of Brunnian bottom tangles.
In Section \ref{sect4}, we give an application for the colored Jones polynomial  of Brunnian links.
Section \ref{proof} is devoted to the proofs of the results.

\section{Quantized enveloping algebra $U_h(sl_2)$}\label{sect1}
In this section, we recall the definition of $U_h(sl_2)$ and its subalgebras.
We follow the notations in \cite{H2, sakie2}.

\subsection{Quantized enveloping algebra $U_h(sl_2)$}
We recall  the definition of the universal enveloping algebra $U_h(sl_2)$.

We denote by  $U_h=U_h(sl_2)$ the $h$-adically complete $\mathbb{Q}[[h]]$-algebra,
topologically generated by  $H, E,$ and $F$, defined by the relations
\begin{align*}
HE-EH=2E, \quad HF-FH=-2F, \quad EF-FE=\frac{K-K^{-1}}{q^{1/2}-q^{-1/2}},
\end{align*}
where we set 
\begin{align*}
q=\exp h,\quad K=q^{H/2}=\exp\frac{hH}{2}.
\end{align*}

We equip $U_h$  with the topological $\mathbb{Z}$-graded algebra structure such that  $\deg E=1$, $\deg F=-1$, and   $\deg H=0$.
For a homogeneous element $x$ of $U_h$, the degree of $x$ is denoted by $|x|$.

\subsection{$\Z$-subalgebras of $U_h(sl_2)$}

We recall $\Z$-subalgebras of $U_h$ from \cite{H2, sakie2}.

In what follows, we use the following $q$-integer notations.
\begin{align*}
&\{i\}_q = q^i-1,\quad  \{i\}_{q,n} = \{i\}_q\{i-1\}_q\cdots \{i-n+1\}_q,\quad  \{n\}_q! = \{n\}_{q,n},
\\
&[i]_q = \{i\}_q/\{1\}_q,\quad  [n]_q! = [n]_q[n-1]_q\cdots [1]_q, \quad \begin{bmatrix} i \\ n \end{bmatrix} _q  = \{i\}_{q,n}/\{n\}_q!,
\end{align*}
for $i\in \mathbb{Z}, n\geq 0$.

Set 
\begin{align}
&\tilde E^{(n)}=(q^{-1/2}E)^n/[n]_q!,\quad \tilde {F}^{(n)}=F^nK^n/[n]_q! \quad \in U_h,\label{deff}
\\
&e=(q^{1/2}-q^{-1/2})E, \quad f=(q-1)FK \quad  \in U_h, \label{defe}
\end{align}
for $n\geq 0$.

Let $U_{\mathbb{Z}, q}\subset U_h$ denote the $\mathbb{Z}[q,q^{-1}]$-subalgebra  generated by
$K,K^{-1}, \tilde E^{(n)}$, and  $\tilde F^{(n)}$ for $n\geq 1$, which is a $\Z$-version of the  Lusztig's  integral  form (cf. \cite{L, sakie1}).

Let $\mathcal U_{q}\subset \uqzq$ denote the $\mathbb{Z}[q,q^{-1}]$-subalgebra generated by
$K,K^{-1}, e$, and  $\tilde F^{(n)}$ for $n\geq 1$. 

Let $\bar {U}_q\subset \mathcal U_{q}$ denote the $\mathbb{Z}[q,q^{-1}]$-subalgebra   generated by
 $K,K^{-1},e$ and $f$, which is a $\Z$-version of the  integral  form defined by De Concini and Procesi (cf. \cite{De, sakie1}).

 For $X=U_{\mathbb{Z}, q}$,  $\muq $, $\bar {U}_q$, let $X^{\ev}$ denote the 
$\mathbb{Z}[q,q^{-1}]$-subalgebra of $U_h$ defined by the same generators as $X$ except that $K^{\pm 2}$ replaces
$K^{\pm 1}$,
i.e., $\uqzqe \subset \uqzq$ denotes the $\mathbb{Z}[q,q^{-1}]$-subalgebra  generated by
$K^2,K^{-2}, \tilde E^{(n)}$, $\tilde F^{(n)}$, $n\geq 1$;
$\muqe\subset \muq$ denotes the $\mathbb{Z}[q,q^{-1}]$-subalgebra  generated by
$K^2,K^{-2}, e$, $\tilde F^{(n)}$, $n\geq 1$; and
$\uqe\subset \uq$ denotes the $\mathbb{Z}[q,q^{-1}]$-subalgebra  generated by
 $K^2,K^{-2},e$, $f$.

To summarize, we have the following inclusions of the subalgebras of $U_h$. 
\begin{center}
\begin{tabular}{cccccccc}
     
    $\uqe $ & $\subset$ & $\muqe$ & $\subset$ & $\uqzqe$ & & \\
     $\cap$ & & $\cap$ & & $\cap$ & &\\
   $\uq$ & $\subset$ &$\muq$ & $\subset$ & $\uqzq$ & $\subset$ &$U_h$  
\end{tabular}
\end{center}

\subsection{Completion}
 In this section, we recall from \cite{H2}  the completion
 $\tmuqe$ of $\muqe$ in $U_h$ and its completed tensor powers $(\tilde {\mathcal{U}}_q^{\ev})^{\tilde {\otimes} n}$  for $n\geq 0$. 
 
First, we define $\tmuq^{\ev}$. For $p\geq 0$, let
$\mathcal{F}_p(\muqe) 
$ be  the two-sided ideal in $\mathcal{U}_q^{\ev}$ generated by $e^p$.
Let $\tilde {\mathcal{U}}_q^{\ev}$ be the completion  of $\mathcal{U}_q^{\ev}$ in $U_h$
with respect to the decreasing filtration $\{\mathcal{F}_p(\mathcal{U}_q^{\ev})\}_{p\geq 0}$, i.e.,  we define
$\tilde {\mathcal{U}}_q^{\ev}$ as the image of the homomorphism
 $$
\varprojlim_{p\geq 0}\mathcal{U}_q^{\ev}/\mathcal{F}_p(\mathcal{U}_q^{\ev}) \rightarrow  U_h
 $$
 induced by $\mathcal{U}_q^{\ev} \subset U_h$.

We define  $(\tilde {\mathcal{U}}_q^{\ev})^{\tilde {\otimes} n}$ for $n\geq 0$.
For $n=0$, we define
$(\tilde {\mathcal{U}}_q^{\ev})^{\tilde \otimes 0}=\mathbb{Z}[q,q^{-1}].$
For $n\geq 1$, we define $(\tilde {\mathcal{U}}_q^{\ev})^{\tilde {\otimes} n}$ as  the completion of $(\mathcal{U}_q^{\ev})^{\otimes n}$ in $U_h^{\hat \otimes n}$ with respect to the decreasing filtration $\{\mathcal{F}_p\big((\mathcal{U}_q^{\ev})^{\otimes n}\big)\}_{p\geq 0}$, where we set 
\begin{align*}
\mathcal{F}_p\big((\mathcal{U}_q^{\ev})^{\otimes n}\big)=\sum_{i=1}^n(\mathcal{U}_q^{\ev})^{\otimes (i-1)}\otimes 
\mathcal{F}_p(\mathcal{U}_q^{\ev})\otimes (\mathcal{U}_q^{\ev})^{\otimes (n-i)}, \quad p\geq 0,
\end{align*}
 i.e.,  we define 
\begin{align*}
(\tilde {\mathcal{U}}_q^{\ev})^{\tilde {\otimes} n}=\im \Big(\varprojlim_{p\geq 0}(\mathcal{U}_q^{\ev})^{\otimes n}/\mathcal{F}_p\big((\mathcal{U}_q^{\ev})^{\otimes n}\big) \rightarrow  U_h^{\hat \otimes n}\Big).
 \end{align*}

For  a $\Z$-subalgebra $A$ of $(\muqe)^{\otimes n}$, $n\geq 0$, we denote by $\{A\}\hat {}$ the \textit{closure} of $A$ in $(\tilde {\mathcal{U}}_q^{\ev})^{\tilde {\otimes} n}$, which is 
the completion  of $A$ in $U_h^{\hat \otimes n}$ with respect to the decreasing filtration $\mathcal{F}_p\big((\mathcal{U}_q^{\ev})^{\otimes n}\big)\cap A$, i.e., 
$$
\{A\}\hat {}=\im \Big(\varprojlim_{p\geq 0}(A/\big(\mathcal{F}_p\big((\mathcal{U}_q^{\ev})^{\otimes n}\big)\cap A \big)\rightarrow  U_h^{\hat \otimes n}\Big).
 $$
In particular, we denoted by $\uqen{n}$ the closure  $\{(\uqe)^{\otimes n}\}\hat {} $ of $(\uqe)^{\otimes n}$ in  $(\tilde {\mathcal{U}}_q^{\ev})^{\tilde {\otimes} n}$.

\section{Universal $sl_2$ invariant of bottom tangles}\label{sect2}
In this section, we recall the definition of the universal $sl_2$ invariant of bottom tangles.
\subsection{Bottom tangles}
A \textit{bottom tangle} (cf. \cite{H1,H2}) is an oriented, framed tangle in a cube  consisting of arc components such that
 each boundary point is on a line  on the bottom,
and  the two boundary points of each component  are adjacent to each other.
We give a preferred orientation of the tangle so that each component runs from its right boundary point to its left boundary point.
For example, see  Figure \ref{fig:closure1} (a), where the dotted lines represent the framing. We draw a diagram of a bottom tangle in a rectangle assuming the blackboard framing, see Figure \ref{fig:closure1} (b).

The \textit{closure link} $\cl(T)$ of a bottom tangle $T$ is defined as the link in $\mathbb{R}^3$ obtained from  $T$ by closing, see Figure \ref{fig:closure} again.
For each $n$-component link $L$, there is an $n$-component bottom tangle whose closure is  $L$.
For a bottom tangle, we can define its linking matrix as that of the closure link.

\begin{figure}
\centering
\includegraphics[width=8.5cm,clip]{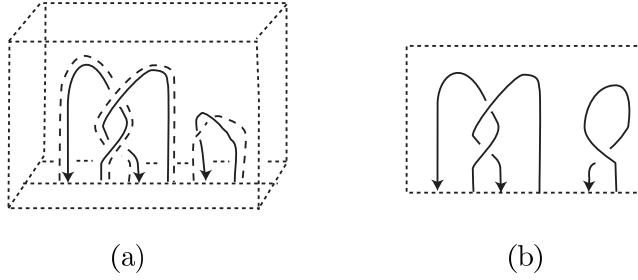}
\caption{ (a) A bottom tangle $T$, (b)  a diagram of $T$}\label{fig:closure1}
\end{figure}%

\subsection{Universal $R$-matrix of $U_h$}
Set 
\begin{align*}
&D=q^{\frac{1}{4}H\otimes H} =\exp \big(\frac{h}{4}H\otimes H\big)\in U_h^{\hat {\otimes }2}.
\end{align*}
We use the following \textit{universal $R$-matrix}  of $U_h$,  
\begin{align*}
R^{\pm 1}=\sum_{n\geq 0}\alpha^{\pm} _n \otimes \beta^{\pm}_n \in U_h^{\hat \otimes 2},
\end{align*}
where we set  formally
\begin{align*}
\alpha _n \otimes \beta _n(&=\alpha^+ _n \otimes \beta^+ _n)=D\Big(q^{\frac{1}{2}n(n-1)}\tilde {F}^{(n)}K^{-n}\otimes e^n\Big),
\\
\alpha _n^- \otimes \beta _n^-&=D^{-1}\Big((-1)^{n}\tilde {F}^{(n)}\otimes K^{-n}e^n\Big).
\end{align*}
(Note that the right hand sides are  sums of infinitely many tensors of the form  $x\otimes y$ with $x,y\in U_h$.
We denote them by $\alpha ^{\pm}_n \otimes \beta^{\pm} _n$ for simplicity.)
\subsection{Universal $sl_2$ invariant of bottom tangles}\label{univinv}

For  an $n$-component bottom tangle $T=T_1\cup \cdots \cup T_n$, we define the universal $sl_2 $ invariant $J_T\in U_h^{\hat {\otimes }n}$  in four steps as follows. We follow the notation in \cite{sakie2}.

\textbf{Step 1. Choose a diagram.}
We choose a diagram $\tilde T$ of $T$ obtained from the copies of the fundamental tangles  depicted in Figure \ref{fig:fundamental},
by pasting horizontally and vertically.  We denote by $C(\tilde T)$ the set of the crossings of $\tilde T$.
For example, for the bottom tangle $B$ depicted  in Figure \ref{fig:base} (a), we can take a diagram $\tilde B$ with $C(\tilde B)=\{c_1,c_2\}$ as depicted in Figure \ref{fig:base} (b).
We call a map 
\begin{align*}
s\co C(\tilde T) \ \ \rightarrow \ \ \{0,1,2,\ldots\}
\end{align*}
a \textit{state}. We denote by  $\mathcal{S}(\tilde T)$ the set of states of the diagram.
\begin{figure}
\centering
\includegraphics[width=9cm,clip]{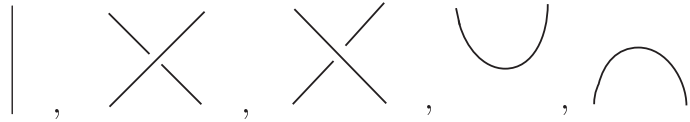}
\caption{Fundamental tangles, where the orientations of the strands are arbitrary
 }\label{fig:fundamental}
\end{figure}%
\begin{figure}
\centering
\includegraphics[width=12cm,clip]{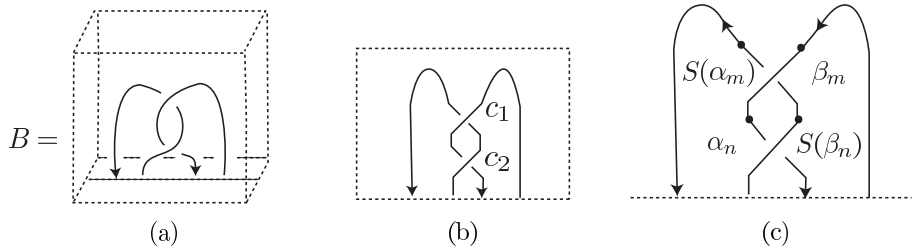}
\caption{(a) A bottom tangle $B$, (b) A  diagram  $\tilde B$ of $B$, (c) The labels associated to a state $t\in \mathcal{S}(B)$
} \label{fig:base}
\end{figure}

\textbf{Step 2.  Attach labels.}
Given a state $s\in \mathcal{S}(\tilde T)$, we attach labels  on the copies of the fundamental tangles in the diagram
following the rule described in Figure \ref{fig:cross}, where  ``$S'$'' should be replaced with $\id$ if 
the string is oriented downward, and with $S$ otherwise.
For example, for a state $t\in \mathcal{S}(\tilde B)$, we put  labels on  $\tilde B$ as in Figure  \ref{fig:base} (c),
where we set $m=t(c_1)$ and $n=t(c_2)$.
\begin{figure}
\centering
\includegraphics[width=12cm,clip]{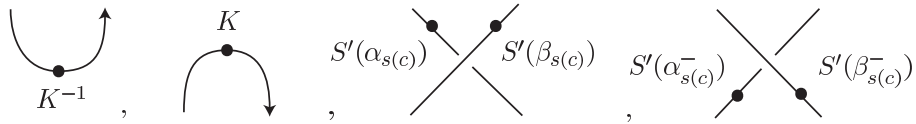}
\caption{How to place labels  on the fundamental tangles}\label{fig:cross}
\end{figure}

\textbf{Step 3. Read the labels.}
We read the labels we have just put on $\tilde T$ and define an element $J_{\tilde T,s}\in U_h^{\hat \otimes n}$ as follows. 
Let $\tilde T=\tilde T_1\cup\cdots  \cup \tilde T_1$, where $\tilde T_i$ corresponds to $T_i$.
We define the $i$th tensorand of $J_{\tilde T,s}$ as the product 
of the labels on $\tilde T_i$, where the labels are read off along $T_i$
reversing the orientation, and   written from left to  right.
For example, for the bottom tangle $B$ and the state $t\in \mathcal{S}(\tilde B)$ in Figure \ref{fig:base}, we have
\begin{align*}
J_{\tilde B,t}&=S(\alpha  _m)S(\beta  _n)\otimes \alpha  _n\beta _m.
\end{align*} 
Here, we identify the labels $S'(\alpha^{\pm} _{i})$ and  $S'(\beta^{\pm} _{i})$
with the first and the second tensorands, respectively, of the element $S'(\alpha^{\pm} _{i})\otimes S'(\beta^{\pm} _{i})\in U_h^{\hat\otimes  2}$.
Also we identify the label $K^{\pm 1}$ with the element  $K^{\pm 1}\in U_h$.
Thus $J_{\tilde T,s}$ is a well-defined element in $U_h^{\hat \otimes n}$. 
For example,  we have
\begin{align*}
J_{\tilde B,t}&=S(\alpha  _m)S(\beta  _n)\otimes \alpha  _n\beta _m
\\
&=\sum q^{\frac{1}{2}m(m-1)} q^{\frac{1}{2}n(n-1)}S(D'_1\tilde {F}^{(m)}K^{-m})S(D''_2e^n)\otimes D'_2\tilde {F}^{(n)}K^{-n}D_1''e^m
\\
&=(-1)^{m+n}q^{-n+2mn}D^{-2}(\tilde F^{(m)}K^{-2n}e^n\otimes \tilde F^{(n)}K^{-2m}e^m)\in U_h^{\hat \otimes 2},
\end{align*}
where $D=\sum D'_1\otimes D''_1=\sum D'_2\otimes D''_2.$
Note that $J_{\tilde T,s}$  \textit{depends} on the choice of the diagram.

\textbf{Step 4.  Take the state sum.}
Set 
\begin{align*}
J_T=\sum_{s\in \mathcal{S}(\tilde T)}J_{\tilde T, s}.
\end{align*}
For example, we have
\begin{align*}
J_B&=\sum_{t\in \mathcal{S}(\tilde B)}J_{\tilde B,t}=\sum_{m,n\geq 0} (-1)^{m+n}q^{-n+2mn}D^{-2}(\tilde F^{(m)}K^{-2n}e^n\otimes \tilde F^{(n)}K^{-2m}e^m).
\end{align*}
As is well known \cite{O},  $J_T$ does not depend on the choice of the diagram, and defines an isotopy invariant of bottom tangles.

\section{Results for the universal $sl_2$ invariant of bottom tangles}\label{sect3}
In this section, we give the main result for the universal $sl_2$ invariant of Brunnian bottom tangles.
\subsection{Universal $sl_2$ invariant of algebraically-split bottom tangles, ribbon bottom tangles and  boundary bottom tangles}
We recall several results for the value of  the universal $sl_2$ invariant of bottom tangles.
Recall the sequence of the subalgebras $\uqe \subset\muqe \subset \uqzqe \subset U_h$.

For an $n$-component bottom tangle $T$, let $\mathrm{Lk}(T)$ denote the linking matrix of $T$.
Set
\begin{align*}
\tilde {D}^{\mathrm{Lk}(T)}&=\prod _{1\leq i\leq n}K_i^{m_{ii}}\prod _{1\leq i\leq j\leq n}D_{ij}^{2m_{ij}}\in U_h^{\hat \otimes n},
\end{align*}
where, $K_i=1^{\otimes i-1} \otimes K \otimes 1^{\otimes n-i}$ for $1\leq i\leq n$ and
\begin{align*}
D_{ij}&=\sum 1^{\otimes i-1} \otimes D' \otimes 1^{\otimes j-i-1} \otimes D'' \otimes 1^{\otimes n-j},
\\
D_{kk}&=\sum 1^{\otimes k-1}\otimes D'D'' \otimes 1^{\otimes n-k},
\end{align*}
for  $1\leq i <j\leq n$, $1\leq k\leq n$,  where  $D=\sum D'\otimes D''$.

\begin{theorem}[{\cite[Proposition 4.2, Remark 4.7]{sakie1}}]\label{WW}
Let  $T$ be  an $n$-component  bottom tangle.
For every diagram $\tilde T$ of $T$ and every  state $s\in \mathcal{S}(\tilde T)$, we have
\begin{align*}
J_{\tilde T,s}\in \tilde D^{\mathrm{Lk}(T)}(\muqe)^{\otimes n}.
\end{align*}
\end{theorem}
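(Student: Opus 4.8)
The plan is to analyze $J_{\tilde T,s}$ label by label, separating at each crossing the Cartan (diagonal) part of the $R$-matrix from its nilpotent part. First I would write each copy of the $R$-matrix in the factored form $R=D\,R_0$ and $R^{-1}=D^{-1}R_0^-$, where $R_0=\sum_n q^{\frac12 n(n-1)}\tilde{F}^{(n)}K^{-n}\otimes e^n$ and $R_0^-=\sum_n (-1)^n \tilde{F}^{(n)}\otimes K^{-n}e^n$ collect the nilpotent parts. The entries $\tilde{F}^{(n)}$ and $e^n$ are products of generators of $\muqe$, so the only factors not manifestly in $\muqe$ are the isolated powers of $K$, and controlling their parity is the whole point. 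The essential computational tool is the set of conjugation identities $D(E\otimes 1)D^{-1}=E\otimes K$, $D(1\otimes E)D^{-1}=K\otimes E$, $D(F\otimes 1)D^{-1}=F\otimes K^{-1}$, and $D(1\otimes F)D^{-1}=K^{-1}\otimes F$, which describe how a Cartan factor moves past a degree-homogeneous label, shifting the neighbouring tensorand by a power of $K$ equal to its degree.

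Next I would push every Cartan factor $D^{\pm1}$ coming from the crossings of $\tilde T$ to the far left of the expression, using the identities above to commute them past the nilpotent labels and recording the resulting $K$-corrections on each strand. For a crossing between two distinct components $i$ and $j$ the displaced factor is a power of the commuting element $D_{ij}$, and since the signed number of crossings between $T_i$ and $T_j$ equals $2m_{ij}$ independently of the diagram, these combine to give exactly $D_{ij}^{2m_{ij}}$. The self-crossings of a single component, together with the normalization forced by the antipode $S$ on the upward-oriented strands, assemble into the remaining factors $K_i^{m_{ii}}$ and $D_{ii}^{2m_{ii}}$. In total the accumulated Cartan part is precisely $\tilde{D}^{\mathrm{Lk}(T)}$.

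It then remains to check that the leftover factor lies in $(\muqe)^{\otimes n}$. On each strand the non-Cartan labels form a product of $e$'s and $\tilde{F}^{(n)}$'s together with a single power of $K$, coming from the explicit $K^{-n}$ in $R_0$ and $R_0^-$, from the $K$-corrections produced when the $D$'s were commuted through, and from the local $K^{\pm1}$ attached by the labelling rules. Since $\muq$ is generated by $K^{\pm1}, e, \tilde{F}^{(n)}$ and is preserved by the antipode $S$, each strand a priori contributes an element of $\muq$; the work is to promote this to $\muqe$.

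The main obstacle is exactly this promotion, which is a parity bookkeeping: one must show that the total $K$-exponent accumulated on strand $i$ is congruent modulo $2$ to the framing $m_{ii}$, so that after removing the factor $K_i^{m_{ii}}$ of $\tilde{D}^{\mathrm{Lk}(T)}$ the residual power of $K$ on that strand is even and hence lies in the subalgebra $\muqe$ generated by $K^{\pm2}, e, \tilde{F}^{(n)}$. This congruence, which the definition of $\tilde{D}^{\mathrm{Lk}(T)}$ is tailored to absorb, has to be derived by relating the state values and the degrees of the labels along strand $i$ to its self-linking number; I expect this to be the step demanding the most care.
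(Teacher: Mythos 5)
Your overall strategy is the same as the one behind the paper's proof (the paper defers this statement to \cite[Proposition 4.2, Remark 4.7]{sakie1}, but the identical technique appears in Lemma \ref{cl2} and the proof of Proposition \ref{p1}): write $R^{\pm 1}=D^{\pm 1}\cdot(\text{nilpotent part})$ whose legs are built from $\tilde F^{(n)}$, $e$ and powers of $K$, push every $D^{\pm 1}$ to the far left using $(x\otimes 1)D^{\pm 1}=D^{\pm 1}(x\otimes K^{\mp|x|})$, assemble the displaced Cartan factors into $\tilde D^{\mathrm{Lk}(T)}$, and check that the leftover lies in the integral form. Within this plan there is one minor omission: your conjugation identities only govern how $D$ moves past $E$ and $F$; to get diagram-independent exponents $D_{ij}^{2m_{ij}}$ you also need to know how the antipodes coming from upward-oriented segments act on the Cartan factors themselves, namely $(S^k\otimes S^l)(D^{\pm 1})=D^{\pm(-1)^{k+l}}$, which is part of Lemma \ref{cl2}. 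It is precisely this flip that makes each crossing contribute $D_{ij}^{\epsilon}$ with $\epsilon$ the \emph{oriented} sign of the crossing, so that the exponents sum to $2m_{ij}$ regardless of the diagram; your listed identities alone do not give this.

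The essential gap is your final paragraph. You correctly isolate the claim that the total $K$-exponent accumulated on strand $i$ is congruent to $m_{ii}$ modulo $2$, but you do not prove it; you only announce that it ``has to be derived.'' That congruence \emph{is} the theorem: everything before it yields only $J_{\tilde T,s}\in \tilde D^{\mathrm{Lk}(T)}(\muq)^{\otimes n}$, a much weaker and essentially formal statement, since $\muq$ contains odd powers of $K$. The parity count is genuinely delicate and state-dependent: the leg $\tilde F^{(n)}K^{-n}$ of $R$ and the leg $K^{-n}e^n$ of $R^{-1}$ each have parity $n$ (the state value at that crossing), each antipode shifts the parity of a leg by that leg's degree, each $D$-move produces a correction $K^{\mp|x|}$ of parity $|x|$, and each cap or cup contributes a $K^{\pm 1}$ of parity $1$. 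One must show that all of these contributions cancel to the framing $m_{ii}$ modulo $2$ for \emph{every} diagram and \emph{every} state $s\in\mathcal{S}(\tilde T)$, which requires the case analysis over crossing types and orientations carried out in \cite[Proposition 4.2]{sakie1}. As it stands, your proposal is a correct setup, faithful to the paper's method, with the decisive step left unexecuted.
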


More precisely, the proof of  \cite[Proposition 4.2]{sakie1} implies the following.
\begin{proposition}\label{p}
Let  $T$ be an $n$-component  bottom tangle. 
For any diagram $\tilde T$ and  any state $s\in \mathcal {S}(\tilde T)$,
we have
\begin{align*}
J_{\tilde T,s} \in \tilde D^{\mathrm{Lk}(T)} \mathcal{F}_{|s|} ((\muqe)^{\otimes n}),
\end{align*}
where we set $|s|=\max \{ s(c) \ | \ c\in C(\tilde T)\}$.
\end{proposition}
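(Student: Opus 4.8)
The plan is to run the proof of Theorem \ref{WW} while additionally keeping track of the power of $e$ that each crossing contributes. Since $\mathcal{F}_{|s|}(\muqe)$ is by definition the two-sided ideal of $\muqe$ generated by $e^{|s|}$, and $\mathcal{F}_{|s|}((\muqe)^{\otimes n})$ is the sum of the slots in which one tensorand lies in $\mathcal{F}_{|s|}(\muqe)$, it suffices to exhibit, after the prefactor $\tilde D^{\mathrm{Lk}(T)}$ has been pulled out, a single tensorand of $J_{\tilde T,s}$ containing $e^{|s|}$ as a factor. The guiding observation is that the $R$-matrix labels attached at a crossing $c$ form the pair $\alpha^{\pm}_{s(c)}\otimes\beta^{\pm}_{s(c)}$, and by the defining formulas for $\alpha^{\pm}_n\otimes\beta^{\pm}_n$ the $\beta$-tensorand is $e^{s(c)}$ up to a power of $K$ and the shared factor $D$; thus each crossing deposits the factor $e^{s(c)}$ (or its image under $S$) onto exactly one strand, namely the one reading off the $\beta$-label.

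First I would fix a crossing $c_0$ with $s(c_0)=|s|$ and locate the strand, say $\tilde T_{i_0}$, onto which its $\beta$-label is read. According to the orientation of that strand, the label enters the $i_0$-th tensorand of $J_{\tilde T,s}$ either as $\beta^{\pm}_{|s|}$, contributing the factor $e^{|s|}$, or as $S(\beta^{\pm}_{|s|})$, contributing $S(e^{|s|})$. Using the antipode relations together with $Ke=qeK$, one checks that $S(e^{|s|})$ equals $e^{|s|}$ multiplied, on one side, by an invertible element of $U_h$, namely a scalar times a power of $K$. Hence in either case the distinguished label equals $e^{|s|}$ up to an invertible factor, and $e^{|s|}$ appears as a genuine subfactor of the $i_0$-th tensorand.

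Next I would invoke Theorem \ref{WW} in the refined form in which it is proved in \cite{sakie1}: one rewrites $J_{\tilde T,s}=\tilde D^{\mathrm{Lk}(T)}\,x$ with $x\in(\muqe)^{\otimes n}$ by collecting all the $D$-factors and all the accompanying powers of $K$ into the prefactor $\tilde D^{\mathrm{Lk}(T)}$. Since $D$ and every such $K$-correction is built from $H$ alone, moving them past an occurrence of $e$ costs only a power of $q$ and never lowers the $e$-degree; consequently the factor $e^{|s|}$ isolated above survives intact in the $i_0$-th tensorand $x_{i_0}$ of $x$. As $x_{i_0}\in\muqe$ by Theorem \ref{WW}, it is a product of elements of $\muqe$ having $e^{|s|}$ as a subfactor, so $x_{i_0}\in\mathcal{F}_{|s|}(\muqe)$ and therefore $x\in\mathcal{F}_{|s|}((\muqe)^{\otimes n})$, which is exactly the assertion.

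The step I expect to be the main obstacle is the $K$-power bookkeeping in this last passage: the antipode and the factors $D$ produce powers of $K$ that may a priori be odd, whereas $\muqe$ contains only the even powers $K^{\pm2}$. I expect this to be dealt with precisely as in the proof of Theorem \ref{WW}, where these odd corrections are exactly what is absorbed into $\tilde D^{\mathrm{Lk}(T)}$; the one genuinely new point is that this absorption can be carried out without disturbing the distinguished factor $e^{|s|}$, which is legitimate because $\tilde D^{\mathrm{Lk}(T)}$ and all the corrections involve only $H$ and hence commute with $e$ up to powers of $q$. Everything else is the routine reordering already performed in \cite{sakie1}.
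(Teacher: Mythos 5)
Your proposal is correct and is essentially the paper's own argument: the paper justifies Proposition \ref{p} only by remarking that the proof of Theorem \ref{WW} in \cite{sakie1} already tracks this information, namely that a crossing $c_0$ with $s(c_0)=|s|$ deposits a $\beta$-label equal to $e^{|s|}$ up to a unit and a power of $K$ (the antipode contributing only such factors), and that this factor survives the extraction of $\tilde D^{\mathrm{Lk}(T)}$ because all the corrections are built from $H$ alone. The one point to be careful about is your penultimate inference---``$x_{i_0}\in\muqe$ and $x_{i_0}=u\,e^{|s|}\,v$, hence $x_{i_0}\in\mathcal{F}_{|s|}(\muqe)$''---which does not follow from the \emph{statement} of Theorem \ref{WW}, since the flanking factors $u,v$ (raw labels such as $\tilde F^{(m)}K^{-m}$, $S(\beta^{\pm}_m)$, and the cap/cup labels $K^{\pm1}$) involve odd powers of $K$ and lie only in $\muq$; one needs either the factorization into $\muqe$-elements exhibited by the proof in \cite{sakie1}, or the parity identity $\mathcal{F}_{|s|}(\muq)\cap\muqe=\mathcal{F}_{|s|}(\muqe)$ coming from the decomposition $\muq=\muqe\oplus K\muqe$, and this is exactly the bookkeeping you (legitimately, as the paper itself does) defer to \cite{sakie1}.
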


Theorem \ref{WW} and Proposition \ref{p} imply the following.
\begin{theorem}[{\cite[Proposition 4.2, Remark 4.7]{sakie1}}]\label{s3}
For an $n$-component  bottom tangle $T$, we have 
\begin{align*}
J_T\in \tilde D^{\mathrm{Lk}(T)} (\tilde {\mathcal{U}}_q^{ev})^{\tilde \otimes n}.
\end{align*}
\end{theorem}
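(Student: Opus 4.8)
The plan is to fix a diagram $\tilde T$ of $T$ and to extract the common left factor $\tilde D^{\mathrm{Lk}(T)}$ from the state sum, reducing the claim to a convergence statement inside the completion $\tmuqen{n}$. Since $\tilde D^{\mathrm{Lk}(T)}$ is built from the $K_i$ and $D_{ij}$, which are invertible in $U_h^{\hat \otimes n}$, and since it depends only on the linking matrix $\mathrm{Lk}(T)$ and hence not on the chosen state $s$, it suffices to prove that
\begin{align*}
(\tilde D^{\mathrm{Lk}(T)})^{-1}J_T=\sum_{s\in \mathcal{S}(\tilde T)}(\tilde D^{\mathrm{Lk}(T)})^{-1}J_{\tilde T,s}
\end{align*}
lies in $\tmuqen{n}$. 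Writing $y_s=(\tilde D^{\mathrm{Lk}(T)})^{-1}J_{\tilde T,s}$, Theorem \ref{WW} gives $y_s\in (\muqe)^{\otimes n}$, so each summand already lives in the algebra being completed.

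First I would quantify the filtration degree of each summand using Proposition \ref{p}, which sharpens Theorem \ref{WW} to $y_s\in \mathcal{F}_{|s|}\big((\muqe)^{\otimes n}\big)$, where $|s|=\max\{s(c)\mid c\in C(\tilde T)\}$. The decisive combinatorial observation is that $C(\tilde T)$ is a finite set, so for each fixed $p\geq 0$ there are at most $p^{|C(\tilde T)|}$ states $s$ with $|s|<p$; equivalently, all but finitely many $y_s$ lie in $\mathcal{F}_p\big((\muqe)^{\otimes n}\big)$. Next I would form the partial sums $S_N=\sum_{|s|<N}y_s\in (\muqe)^{\otimes n}$ and check that they are Cauchy for the filtration: for $M>N$ the difference $S_M-S_N=\sum_{N\leq |s|<M}y_s$ is a finite sum of elements of $\mathcal{F}_{|s|}\subseteq \mathcal{F}_N$, hence lies in $\mathcal{F}_N\big((\muqe)^{\otimes n}\big)$. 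The compatible family $\{S_p \bmod \mathcal{F}_p\}_{p\geq 0}$ then determines an element of $\varprojlim_p (\muqe)^{\otimes n}/\mathcal{F}_p\big((\muqe)^{\otimes n}\big)$, whose image under the structure map to $U_h^{\hat \otimes n}$ is, by definition, an element of $\tmuqen{n}$.

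The remaining and most delicate step is to identify that image with $(\tilde D^{\mathrm{Lk}(T)})^{-1}J_T$, i.e., to reconcile the two topologies at play: the $\mathcal{F}_p$-adic filtration used to define $\tmuqen{n}$, and the $h$-adic topology of $U_h^{\hat \otimes n}$ in which the state sum $J_T$ is a priori defined. Here I would use that $e=(q^{1/2}-q^{-1/2})E$ with $q^{1/2}-q^{-1/2}\in h\mathbb{Q}[[h]]$, so that $e^p\in h^pU_h$ and therefore $\mathcal{F}_p\big((\muqe)^{\otimes n}\big)\subseteq h^pU_h^{\hat \otimes n}$. Consequently the $\mathcal{F}_p$-adic Cauchy sequence $(S_N)$ is also $h$-adically Cauchy, the structure map sends the inverse-limit element to the $h$-adic limit $\lim_N S_N=(\tilde D^{\mathrm{Lk}(T)})^{-1}J_T$, and multiplying back by $\tilde D^{\mathrm{Lk}(T)}$ yields $J_T\in \tilde D^{\mathrm{Lk}(T)}\,\tmuqen{n}$. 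I expect this topological comparison---ensuring that convergence in the finer filtration topology actually recovers the element $J_T$ already defined in $U_h^{\hat \otimes n}$---to be the main obstacle, together with the bookkeeping that lets $\tilde D^{\mathrm{Lk}(T)}$ be pulled out uniformly across all states.
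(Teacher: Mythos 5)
Your proposal is correct and follows exactly the route the paper takes: the paper derives Theorem \ref{s3} simply by asserting that Theorem \ref{WW} and Proposition \ref{p} imply it, and your argument---factoring out the state-independent invertible element $\tilde D^{\mathrm{Lk}(T)}$, using Proposition \ref{p} for the filtration degrees, the finiteness of states with $|s|<p$, and the compatibility of the $\mathcal{F}_p$-adic and $h$-adic topologies via $\mathcal{F}_p\big((\muqe)^{\otimes n}\big)\subseteq h^pU_h^{\hat\otimes n}$---is precisely the content of that implication, spelled out in full.
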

The following is the special case of Theorem \ref{s3} for algebraically-split  bottom tangle with $0$-framing (i.e., a bottom tangle with $0$-linking matrix), which was proved  first by Habiro \cite{H2}.
\begin{theorem}[{Habiro \cite{H2}}]\label{h0}
Let  $T$ be an $n$-component algebraically-split bottom tangle with $0$-framing.
Then we have
\begin{align*}
J_T\in  (\tilde {\mathcal{U}}_q^{ev})^{\tilde \otimes n}.
\end{align*}
\end{theorem}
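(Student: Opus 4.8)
The plan is to deduce Theorem \ref{h0} as the special case of Theorem \ref{s3} where the linking matrix vanishes. First I would observe that for an algebraically-split bottom tangle with $0$-framing, the linking matrix $\mathrm{Lk}(T)$ is the zero matrix, so every entry $m_{ij}=0$. Substituting $m_{ij}=0$ into the definition of $\tilde D^{\mathrm{Lk}(T)}=\prod_{1\leq i\leq n}K_i^{m_{ii}}\prod_{1\leq i\leq j\leq n}D_{ij}^{2m_{ij}}$, every factor becomes an empty product, so $\tilde D^{\mathrm{Lk}(T)}=1$, the unit of $U_h^{\hat\otimes n}$.

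With that substitution in hand, Theorem \ref{s3} immediately gives
\begin{align*}
J_T\in \tilde D^{\mathrm{Lk}(T)}(\tilde{\mathcal{U}}_q^{\ev})^{\tilde\otimes n}=1\cdot(\tilde{\mathcal{U}}_q^{\ev})^{\tilde\otimes n}=(\tilde{\mathcal{U}}_q^{\ev})^{\tilde\otimes n},
\end{align*}
which is exactly the claimed containment. So the entire argument reduces to verifying that the normalizing prefactor $\tilde D^{\mathrm{Lk}(T)}$ collapses to the identity precisely under the algebraically-split $0$-framing hypothesis, and then invoking the already-established Theorem \ref{s3}.

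The only point requiring a little care is the interpretation of the diagonal factors $D_{kk}^{2m_{kk}}$ and $K_k^{m_{kk}}$ when $m_{kk}=0$: I would note that $D_{kk}^{0}=1^{\otimes n}$ and $K_k^{0}=1^{\otimes n}$ by convention, so no residual $K$- or $D$-factor survives even on the diagonal. Since being algebraically-split forces all off-diagonal linking numbers to vanish and the $0$-framing condition forces all self-linking (framing) numbers $m_{kk}$ to vanish, both the off-diagonal product over $1\leq i<j\leq n$ and the diagonal product over $1\leq k\leq n$ are empty, giving $\tilde D^{\mathrm{Lk}(T)}=1$.

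I do not expect any genuine obstacle here, since the statement is explicitly flagged in the excerpt as the special case of Theorem \ref{s3} with $0$-linking matrix; the proof is essentially a one-line specialization. The only thing worth emphasizing in writing is the bookkeeping that \emph{both} the algebraically-split condition (off-diagonal entries) and the $0$-framing condition (diagonal entries) are needed to kill \emph{all} of $\tilde D^{\mathrm{Lk}(T)}$, and that Theorem \ref{s3} is the substantive input doing all the real work.
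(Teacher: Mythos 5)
Your proposal is correct and matches the paper's treatment exactly: the paper states Theorem \ref{h0} precisely as the special case of Theorem \ref{s3} for a bottom tangle with $0$-linking matrix, where $\tilde D^{\mathrm{Lk}(T)}=1$. Your only addition is spelling out the (correct) bookkeeping that the algebraically-split condition kills the off-diagonal entries and the $0$-framing condition kills the diagonal ones, which the paper leaves implicit.
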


In \cite{sakie1} and  \cite{sakie2}, we defined a refined completion $\uqenh{n}\subset \uqen{n}$, and proved the following theorem, which is an improvement of Theorem \ref {h0} in the case of
ribbon bottom tangles and  boundary bottom tangles.

\begin{theorem}[\cite{sakie1, sakie2}]\label{s0}
Let  $T$ be an $n$-component ribbon or boundary bottom tangle with $0$-framing.
Then we have 
\begin{align*}
J_T\in  \uqenh{n}.
\end{align*}
\end{theorem}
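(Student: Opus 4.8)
The plan is to prove Theorem~\ref{s0} by comparing the refined completion $\uqenh{n}$ with the coarser completion $\uqen{n}$ from Theorem~\ref{s3} (specialized to the $0$-framing case), and to exploit the special topological structure of ribbon and boundary bottom tangles to gain the extra divisibility that the refinement demands. Recall that for a $0$-framed tangle Theorem~\ref{s3} already gives $J_T\in(\tmuqe)^{\tilde\otimes n}$, with no $\tilde D^{\mathrm{Lk}(T)}$ prefactor since the linking matrix vanishes. So the task is \emph{not} to recover membership in the big completion, but to show that the state-sum $J_T$ actually lands in the smaller closure $\{(\uqe)^{\otimes n}\}\hat{}$, i.e.\ that after completion the contributions assemble into the De Concini--Procesi form $\uqe$ (generated by $K^{\pm2}, e, f$) rather than the larger $\muqe$ (which also uses the divided powers $\f{n}$). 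The key structural fact I would lean on is that $\f{n}=F^nK^n/[n]_q!$ carries a denominator $[n]_q!$, whereas $\uqe$ only sees $f=(q-1)FK$; so passing from $\muqe$ to $\uqe$ is exactly a statement that the potentially non-integral $[n]_q!$-denominators cancel when $T$ is ribbon or boundary.

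First I would set up the diagrammatic framework from Section~\ref{univinv}: fix a diagram $\tilde T$ and, for each state $s$, use Proposition~\ref{p} to record that $J_{\tilde T,s}\in\mathcal F_{|s|}\big((\muqe)^{\otimes n}\big)$ (with the $0$-framing killing the $\tilde D$ prefactor). Next I would invoke the defining topological properties. For a ribbon bottom tangle each component bounds an immersed disk with only ribbon singularities, and for a boundary bottom tangle the components bound disjoint embedded Seifert surfaces; in either case there is a surjection from a free group / a handle decomposition that lets one express $T$ (up to the relevant equivalence) as obtained from the trivial tangle by a sequence of moves whose universal invariant contributions are already known to lie in the $\uqe$-layer. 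The heart of the argument is then a local/algebraic lemma: the building blocks contributed by these surfaces produce, after the state sum over $s$, only products of $e$, $f$, and $K^{\pm2}$ together with \emph{convergent} series whose coefficients are genuine Laurent polynomials, so the apparent $[n]_q!$-denominators in the $\f{n}$ appearing in $R^{\pm1}$ are absorbed.

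The decisive step, and the one I expect to be the main obstacle, is controlling how the divided powers $\f{n}$ recombine across the completion. Individually $J_{\tilde T,s}$ only sits in $(\muqe)^{\otimes n}$, so no single state lies in $(\uqe)^{\otimes n}$; the improvement must come from summing over all $s$ and completing with respect to the $e$-adic filtration $\mathcal F_p$. Concretely I would need to show that the image of $J_T$ in each quotient $(\muqe)^{\otimes n}/\mathcal F_p\big((\muqe)^{\otimes n}\big)$ already lies in the image of $(\uqe)^{\otimes n}$, uniformly in $p$, which is what membership in $\{(\uqe)^{\otimes n}\}\hat{}$ means. This is where the ribbon/boundary hypothesis is genuinely used: for a general $0$-framed tangle this need not hold, and the obstruction is precisely measured by the failure of the $\f{n}$-denominators to cancel. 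I anticipate that the cleanest route is to reduce to a normal form for ribbon and boundary tangles (for instance via the clasper / surgery presentations used in \cite{H2}), to compute the universal invariant of each elementary piece by hand once, and then to verify the denominator cancellation by an induction on the number of pieces, with the $e$-adic filtration guaranteeing convergence of the resulting infinite sum. The bookkeeping of the antipode $S$ and the $K$-conjugations appearing in Step~3 of the state-sum construction is routine but must be tracked carefully so as not to reintroduce spurious denominators.
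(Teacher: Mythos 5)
You should first know that this paper contains no proof of Theorem \ref{s0}: it is quoted as a result of \cite{sakie1} (ribbon case) and \cite{sakie2} (boundary case), and even the refined completion $\uqenh{n}$ in its statement is only cited here, never defined. So your attempt must stand on its own, and it has a decisive gap: you are proving the wrong statement. Your target throughout is the closure $\{(\uqe)^{\otimes n}\}\hat {}$, which by the paper's definitions is exactly $\uqen{n}$; but the theorem asserts $J_T\in \uqenh{n}$, a different, refined completion satisfying $\uqenh{n}\subset \uqen{n}$, and the Remark immediately after the theorem says explicitly that it is not known whether this inclusion is proper. Consequently, membership in $\uqen{n}$ does not yield membership in $\uqenh{n}$: what you outline is at best the coarser $\uqen{n}$-version of the statement (the variant tied to Habiro's conjecture \cite[Conjecture 8.9]{H2} in that Remark), not Theorem \ref{s0}. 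Any genuine proof has to engage with the actual definition of $\uqenh{n}$ from \cite{sakie1, sakie2}, which your proposal never does --- unsurprisingly, since that definition does not appear in the present paper.

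Second, even for the weaker $\uqen{n}$-statement, the mathematical content is deferred rather than supplied. The entire difficulty is concentrated in your ``local/algebraic lemma'' --- that the pieces coming from ribbon disks or disjoint Seifert surfaces contribute only $e$, $f$, $K^{\pm 2}$ with Laurent-polynomial coefficients, so that the $\{n\}_q!$-denominators hidden in the divided powers $\f{n}$ of the $R$-matrix cancel --- and you assert it (``I anticipate'', ``I expect'') instead of proving it. The known arguments occupy the bulk of \cite{sakie1} and \cite{sakie2}: they rest on concrete presentations of ribbon and boundary bottom tangles (e.g.\ Habiro's description of boundary bottom tangles via commutator-type operations on trivial tangles), explicit computation of the universal invariant of those building blocks, and a proof that the resulting state sums converge in the refined topology defining $\uqenh{n}$. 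Naming ``normal form plus induction on the number of pieces'' is a plan, not a proof; as written, there is no step in your proposal that could actually be checked, and the step that matters is precisely the one left open.
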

\begin{remark}
 Theorem \ref{s0} with $\uqen{n}$ replaced with $\uqenh{n}$ for boundary bottom tangles had been conjectured by Habiro \cite[Conjecture 8.9]{H2}.
Here, we do not know whether the inclusion  $\uqenh{n}\subset \uqen{n}$ is proper or not, but the definition of $\uqenh{n}$ is more natural than that of $\uqen{n}$ in the settings in \cite{sakie1, sakie2}.
\end{remark}

\subsection{Result  for the universal $sl_2$ invariant of Brunnian bottom tangles}
The main result of this paper is the following,  which is an improvement of Theorem \ref {h0} in the case of Brunnian bottom tangles.
\begin{theorem}\label{1}
Let $T$ be an $n$-component Brunnian bottom tangle with $n\geq3$.  We have
\begin{align*}
J_T\in U_{Br}^{(n)},
\end{align*}
where we set 
\begin{align*}
U_{Br}^{(n)}= \bigcap _{i=1}^n \Big\{ \Big((\uqe) ^{\otimes i-1} \otimes  \uqzq ^{\ev}\otimes(\uqe) ^{\otimes n-i}\Big) \cap (\muqe)^{ \otimes n}\Big\}\hat {}.
\end{align*}
\end{theorem}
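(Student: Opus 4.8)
The plan is to reduce the statement to a collection of coordinate-wise membership conditions and then establish each one by combining a Brunnian-triviality argument with the general estimate of Proposition \ref{p}. By Theorem \ref{s3}, for any diagram $\tilde T$ and any state $s$ we have $J_{\tilde T,s}\in \tilde D^{\mathrm{Lk}(T)}(\muqe)^{\otimes n}$, and since a Brunnian tangle has trivial proper sublinks it is in particular algebraically-split with $0$-framing, so $\mathrm{Lk}(T)=0$ and $\tilde D^{\mathrm{Lk}(T)}=1$. Thus Theorem \ref{h0} already gives $J_T\in (\tmuqe)^{\tilde\otimes n}$, which supplies the outer membership in $(\muqe)^{\otimes n}$ needed inside each brace of $U_{Br}^{(n)}$. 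It therefore remains to prove, for each fixed $i$, that $J_T$ lies in the closure of $(\uqe)^{\otimes i-1}\otimes \uqzq^{\ev}\otimes(\uqe)^{\otimes n-i}$; the intersection over $i$ then gives the result, and the inclusion of closures into the stated intersection of closures should follow because the filtration $\mathcal F_p((\muqe)^{\otimes n})$ is common to all tensor factors.

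The key topological input is the Brunnian hypothesis applied one component at a time. For a fixed index $i$, consider the proper subtangle $T'=T_1\cup\cdots\cup \widehat{T_i}\cup\cdots\cup T_n$ obtained by deleting the $i$th strand; by Brunnianness $T'$ is the trivial tangle. First I would choose a good diagram $\tilde T$ in which the $i$th component is drawn so that all crossings involving the other $n-1$ components, with $T_i$ removed, present $T'$ as the trivial $\cap\cdots\cap$ diagram. The strategy is to show that in every tensorand other than the $i$th the labels read off assemble into an element of $\uqe$ rather than merely $\muqe$: intuitively, because removing $T_i$ trivializes the remaining picture, the factors $e$ and $f$ appearing on each other strand must occur in the balanced combinations that define $\uqe$ (recall $\uqe$ is generated by $K^{\pm2},e,f$, whereas $\muqe$ additionally allows the divided powers $\tilde F^{(n)}$). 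The $i$th tensorand is allowed to remain in the full $\uqzq^{\ev}$, which is why only that one slot carries the larger algebra.

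The technical heart is the passage from "the sublink is trivial" to "the off-$i$ tensorands land in $\uqe$". I would run this through the state-sum: for each state $s$, the element $J_{\tilde T,s}$ is an explicit product of the labels $\alpha^{\pm}_n,\beta^{\pm}_n$ built from $D^{\pm1}$, $\tilde F^{(n)}$, $e^n$ and powers of $K$, as in the worked example for $B$. The divided-power factors $\tilde F^{(n)}/[n]_q!$ are exactly the obstruction to membership in $\uqe$. The plan is to argue that the crossings on a given non-$i$ strand pair up: each $\tilde F^{(n)}$ contributed by a crossing of $T_i$ over that strand is matched, after sliding and using the trivializing isotopy of $T'$, against a compensating factor so that only integer combinations of $e$ and $f$ survive in that tensorand; the divided denominators are then absorbed into the $i$th slot. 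This is the same bookkeeping underlying Proposition \ref{p} and Theorem \ref{WW}, now performed with one distinguished strand. Finally I would take the state sum and pass to the limit, using Proposition \ref{p} to control convergence: since $J_{\tilde T,s}\in \mathcal F_{|s|}((\muqe)^{\otimes n})$, the tail sums lie in arbitrarily deep filtration, so the limit of the partial sums lies in the closure $\{\,\cdots\,\}\hat{}$ as required.

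I expect the main obstacle to be making the phrase "removing $T_i$ trivializes the diagram, hence the remaining labels balance into $\uqe$" into a rigorous, isotopy-invariant statement at the level of the state sum. The difficulty is that $J_{\tilde T,s}$ is diagram-dependent, so one cannot simply delete a strand from a fixed diagram and read off the answer; I would need to choose the diagram compatibly with the Brunnian trivialization and track how the labels of the $i$th strand's crossings redistribute into the $i$th tensorand. Controlling the $D^{\pm1}$ factors and the $K$-powers so that the even-grading condition ($K^{\pm2}$ rather than $K^{\pm1}$) is preserved in each non-$i$ slot, while allowing $\uqzq^{\ev}$ in slot $i$, is the most delicate bookkeeping, and is where the $0$-framing and algebraic-splitness consequences of Brunnianness must be used carefully.
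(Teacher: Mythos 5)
Your skeleton coincides with the paper's: for each fixed $i$, Brunnianness lets you draw $T$ so that the subdiagram obtained by deleting $\tilde T_i$ has no crossings at all, hence every crossing involves $\tilde T_i$; one then wants each state-sum term to lie in $(\uqe)^{\otimes i-1}\otimes\uqzqe\otimes(\uqe)^{\otimes n-i}$, and combining this with Proposition \ref{p} (with $\mathrm{Lk}(T)=0$, so $\tilde D^{\mathrm{Lk}(T)}=1$) pushes the state sum into the closure occurring in $U_{Br}^{(n)}$; intersecting over $i$ finishes. This is exactly the paper's Proposition \ref{p1} and its use in deducing Theorem \ref{1}, so your reduction and your convergence argument via $\mathcal{F}_{|s|}$ are sound.

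The gap is the mechanism you propose for keeping divided powers out of the non-$i$ tensorands. There is no ``pairing up'' of crossings and no compensating cancellation: in $J_{\tilde T,s}$ every crossing $c$ carries its own independent index $s(c)$, the factors $\f{m}$ produced at distinct crossings sit at different positions of a noncommutative product, and $J_{\tilde T,s}$ for a fixed state is not an isotopy invariant, so ``sliding along the trivializing isotopy'' is not a legal move at this level. Nor can this be ``the same bookkeeping'' as in Theorem \ref{WW} and Proposition \ref{p}: those results place every tensorand in $\muqe$, which contains all divided powers, and can never produce the smaller algebra $\uq$ in any slot. The missing idea is a purely local, single-crossing fact about the $R$-matrix (the paper's Lemma \ref{cl2}): since $e^m=\{m\}_q!\,\e{m}$ and $f^m=q^{-\frac{1}{2}m(m-1)}\{m\}_q!\,\f{m}$, the same summand admits the two expressions
\begin{align*}
\alpha_m\otimes\beta_m
=D\big(q^{\frac{1}{2}m(m-1)}\f{m}K^{-m}\otimes e^m\big)
=D\big(q^{m(m-1)}f^{m}K^{-m}\otimes \e{m}\big),
\end{align*}
and similarly for $R^{-1}$ and for all antipode images, whence every crossing label lies in $D^{\pm1}\big((\uqzq\otimes\uq)\cap(\uq\otimes\uqzq)\big)$. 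So at each crossing between $\tilde T_i$ and another strand you simply \emph{choose} the expression that puts the divided-power leg on $\tilde T_i$ and the leg $e^m$ or $f^m\in\uq$ on the other strand; self-crossings of $\tilde T_i$ put both legs into slot $i$, where $\uqzq$ absorbs them. No denominator is ever absorbed across slots. Your final worry about $K$-parity also dissolves: after gathering all $D^{\pm1}$'s to the left by $(x\otimes1)D^{\pm1}=D^{\pm1}(x\otimes K^{\mp|x|})$ and cancelling them (the linking matrix is $0$), one gets, say for $i=1$, $J_{\tilde T,s}\in\uqzq\otimes\uq^{\otimes n-1}$, and intersecting with $(\muqe)^{\otimes n}$ --- which holds by Theorem \ref{WW} --- yields $J_{\tilde T,s}\in\uqzqe\otimes(\uqe)^{\otimes n-1}$ with no tracking of individual $K$-powers.
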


Here, since a trivial bottom tangle has $0$-framing, a Brunnian bottom tangle also has $0$-framing by the definition.
To compare Theorem \ref{1} with Theorems \ref{h0} and \ref{s0}, for $n\geq 3$, we have the following.
\begin{center}
\begin{tabular}{ccc}
     \{$n$-comp. alg. split bottom tangles with $0$-framing\} &$\stackrel {J}{\rightarrow}$  & $\tmuqen{n}$  \\
    $\cup$ &  & $\cup$ \\
     \{$n$-comp. Brunnian bottom tangles\} & $\stackrel {J}{\rightarrow}$ & $U_{Br}^{(n)}$\\
     &  & $\cup$ \\
   \{$n$-comp. ribbon or boundary bottom tangles with $0$-framing\}  & $\stackrel {J}{\rightarrow}$ & $\uqenh{n}$
\end{tabular}
\end{center}

We can define the Milnor $\mu$ invariants \cite{M1,M2} of a bottom tangle as that of the corresponding string link  described in \cite[Section 13]{H1}.
It is known that the Milnor $\mu$ invariants of ribbon bottom tangles and boundary bottom tangles vanish.
It is also known that the Milnor $ \mu $ invariants of length $\leq n-1$ of $n$-component  Brunnian bottom tangles vanish.
Thus we have the following conjecture.

\begin{conjecture}
\begin{itemize}
\item[(i)] Let $T$ be an $n$-component bottom tangle with $0$-framing.
If the Milnor $\mu$ invariants  of $T$ vanish, then
 we have $J_T\in \uqenh{n}$.

 \item[(ii)]   For $n\geq 3$, let $T$ be an $n$-component bottom tangle with $0$-framing.
If the  Milnor $ \mu $ invariants of $T$ of length $\leq n-1$  vanish,
 then we have $J_T\in U_{Br}^{(n)}.$ 
\end{itemize}
\end{conjecture}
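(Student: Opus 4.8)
The plan is to attack both parts of the conjecture through Habiro's clasper calculus, using the already–established geometric cases (Theorems \ref{s0} and \ref{1}) as anchors and upgrading them along the topological filtration measured by the Milnor $\mu$ invariants. The starting point is Theorem \ref{h0}: since vanishing of the length--two Milnor invariants is exactly the condition that $T$ be algebraically split, and the $0$-framing hypothesis makes $\tilde D^{\mathrm{Lk}(T)}=1$, in both parts we already have $J_T\in\tmuqen{n}$. The task is therefore purely to sharpen the target subalgebra, and the guiding principle is that the descent from $\tmuqen{n}$ down to $\uqenh{n}$ or $U_{Br}^{(n)}$ should be governed precisely by the Milnor $\mu$ invariants, since these are the finite-type invariants carried by the tree part of the universal invariant.

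For part (ii) I would first translate the hypothesis into surgery language: vanishing of the Milnor invariants of length $\leq n-1$ should be expressed, via the clasper description of the Milnor filtration, as a presentation of $T$ from the trivial bottom tangle by surgery on tree claspers of degree at least $n-1$, together with claspers ``localized'' on a proper subset of the components whose contributions cancel in the relevant graded quotient. The definition of $U_{Br}^{(n)}$ as the intersection over $i$ of the closures of $(\uqe)^{\otimes i-1}\otimes\uqzqe\otimes(\uqe)^{\otimes n-i}$, cut down by $(\muqe)^{\otimes n}$, is tailored to exactly this geometry: the $i$-th term confines every tensor factor except the $i$-th to the small algebra $\uqe$, reflecting that, after deleting component $i$, the complementary $(n-1)$-component subtangle should behave as a trivial tangle. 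The plan is therefore to fix $i$, delete component $i$, argue that the hypothesis forces the complementary subtangle to contribute only through $\uqe$ on its slots, and then reassemble the $n$ membership statements so that they hold simultaneously inside the intersected closure. Making this reassembly compatible with the inverse-limit completions of Section~\ref{sect1}, so that the $\mathcal{F}_p\big((\muqe)^{\otimes n}\big)$-adic approximations match across all $i$, is the delicate bookkeeping step.

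For part (i) the hypothesis is that all Milnor invariants vanish, placing $T$ in the intersection of every term of the Milnor filtration; this is the ``homology boundary'' regime. The natural route is to approximate $T$, up to clasper surgeries of arbitrarily high degree, by boundary bottom tangles, apply Theorem \ref{s0} to each approximation to land in $\uqenh{n}$, and then pass to the limit using the $\mathcal{F}_p$-adic topology: a high-degree clasper surgery should change $J_T$ only modulo $\mathcal{F}_p\big((\muqe)^{\otimes n}\big)$ for large $p$, and $\uqenh{n}$ is closed in this topology, so the limit remains inside $\uqenh{n}$. The crux of this route is a quantitative comparison lemma asserting that the degree of a clasper controls the $\mathcal{F}_p$-level at which it perturbs the universal invariant, compatibly with the refined completion defining $\uqenh{n}$ rather than merely $\uqen{n}$.

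I expect the main obstacle to be the reverse implication inherent in both parts. The geometric hypotheses handled by Theorems \ref{s0} and \ref{1} supply explicit clasper presentations of a prescribed shape, whereas vanishing Milnor invariants is only a filtration-level statement and does not, a priori, hand us a presentation of the required form; concretely, the complementary subtangle appearing in the part (ii) reduction need not be boundary, ribbon, or trivial, but only has vanishing Milnor invariants, so one cannot literally invoke Theorem \ref{s0} or the triviality supplied by the Brunnian hypothesis of Theorem \ref{1}. Bridging this gap demands a dictionary showing that the algebraic filtration on $J_T$ is controlled \emph{exactly}, and not merely bounded, by the Milnor $\mu$ invariants, so as to rule out configurations with vanishing Milnor invariants whose higher clasper contributions nonetheless push $J_T$ out of $U_{Br}^{(n)}$ or $\uqenh{n}$. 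Securing such a dictionary uniformly across all filtration degrees $p$, rather than only at leading order where the Habegger--Masbaum identification of Milnor invariants with the tree part of the Kontsevich integral applies, is exactly the point at which the statement remains conjectural.
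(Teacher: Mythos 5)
This statement is a \emph{conjecture} in the paper: the author offers no proof, only the motivating observations that Milnor invariants vanish for ribbon and boundary bottom tangles and that Milnor invariants of length $\leq n-1$ vanish for Brunnian bottom tangles. So there is nothing in the paper to match your argument against, and the real question is whether your sketch closes the gap the author left open. It does not, and you say as much yourself in your final paragraph; what you have written is a research program whose every load-bearing step is precisely the open part. Concretely: (a) the passage from the filtration-level hypothesis (vanishing Milnor invariants) to a geometric presentation of the required shape is the unproven direction. In part (i) you propose to approximate $T$ by boundary bottom tangles up to clasper surgeries of arbitrarily high degree, but vanishing of all Milnor invariants is not known to imply that $T$ is boundary up to $C_k$-equivalence for every $k$ --- links with vanishing Milnor invariants need not be boundary links, and no result in the clasper literature supplies this approximation. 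In part (ii), deleting component $i$ leaves a subtangle that is only Milnor-trivial up to the relevant length, not trivial, so the mechanism behind the paper's proof of Theorem \ref{1} (Proposition \ref{p1}: a diagram in which the complementary subtangle has \emph{no crossings at all}, feeding Lemma \ref{cl2}) is simply unavailable, as you note; you offer no replacement for it.

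(b) Even granting such presentations, your limit argument needs two unproven analytic facts: a quantitative lemma that surgery on a degree-$d$ clasper perturbs $J_T$ only within $\mathcal{F}_p\big((\muqe)^{\otimes n}\big)$ for $p$ growing with $d$, \emph{and} that this perturbation stays compatible with the refined completion $\uqenh{n}$ rather than the coarser $\uqen{n}$; additionally you need $\uqenh{n}$ (respectively $U_{Br}^{(n)}$) to be closed under the resulting limits. None of this is established, and the paper's own remark that it is unknown whether the inclusion $\uqenh{n}\subset \uqen{n}$ is proper indicates how little control one currently has over these completions. Your opening reduction --- that $0$-framing plus algebraic splitness gives $J_T\in\tmuqen{n}$ via Theorem \ref{h0} --- is correct but is exactly the already-known starting point. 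In short, the proposal correctly identifies the anchors (Theorems \ref{h0}, \ref{s0}, \ref{1}) and correctly locates the obstruction (the missing dictionary between the Milnor filtration and the $\mathcal{F}_p$-adic algebraic filtration, uniformly in $p$ rather than only at the leading, tree-level order), but it proves neither (i) nor (ii); the statement remains a conjecture after your argument just as before it.
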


Theorem \ref {1} is derived from the following proposition, which we prove in Section \ref{proof1}. 
\begin{proposition}\label{p1}
Let $T$ be an $n$-component Brunnian bottom tangle with $n\geq3$. For each $i=1,\ldots,  n$,
there is a diagram $\tilde T^{(i)}$ of $T$  such that  
\begin{align*}
J_{\tilde T^{(i)},s}\in (\uqe) ^{\otimes i-1} \otimes  \uqzq ^{\ev}\otimes(\uqe) ^{\otimes n-i}
\end{align*}
for any state  $s\in \mathcal {S}(\tilde T^{(i)})$.
\end{proposition}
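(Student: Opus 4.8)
The plan is to split the statement into a part that is essentially free and a part that is the real work, then to produce the required diagram from the Brunnian hypothesis, and finally to carry out a divided-power analysis on the labels. First I would record what costs nothing. Since $T$ is Brunnian with $n\geq 3$, every two-component subtangle $T_i\cup T_j$ is a \emph{proper} subtangle and hence trivial, so all pairwise linking numbers vanish; likewise every one-component subtangle is trivial, so the framing is $0$. Thus $\mathrm{Lk}(T)=0$ and $\tilde D^{\mathrm{Lk}(T)}=1\otimes\cdots\otimes1$, so Proposition \ref{p} already gives $J_{\tilde T,s}\in(\muqe)^{\otimes n}$ for \emph{every} diagram $\tilde T$ and state $s$. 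Because $\muqe\subset\uqzqe$, the $i$-th tensorand automatically lies in $\uqzqe$ and requires no argument. Hence the entire content of Proposition \ref{p1} is to exhibit a diagram in which, for each $j\neq i$, the $j$-th tensorand lies in the \emph{smaller} algebra $\uqe$. Since $\uqe$ is generated by $K^{\pm2},e,f$ with $f=(q-1)FK$, whereas the tensorands a priori lie only in $\muqe=\langle K^{\pm2},e,\f{n}\rangle$, this is exactly a divided-power statement: every factor $\f{n}=F^nK^n/[n]_q!$ occurring on $T_j$ must be accompanied by $n$ factors of $(q-1)$ so that it can be rewritten through $f$.

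Second I would build the diagram $\tilde T^{(i)}$ from topology. Deleting the $i$-th component, $T\setminus T_i$ is a proper subtangle, hence trivial; by the isotopy extension theorem choose an ambient isotopy of the cube, fixing the boundary, carrying $T\setminus T_i$ to the standard trivial tangle, in which the components $T_j$ $(j\neq i)$ are mutually disjoint unknotted caps bounding pairwise disjoint embedded disks $D_j$, none of which meets any $T_k$ with $k\neq i$. Applying this isotopy to all of $T$ and taking a blackboard diagram of the image yields $\tilde T^{(i)}$; by construction every crossing of $\tilde T^{(i)}$ lies on $T_i$, and each $T_j$ $(j\neq i)$ is an unknotted cap all of whose crossings are with $T_i$, which merely punctures the disks $D_j$. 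This is precisely where the hypotheses Brunnian and $n\geq3$ are used together: they are exactly what allows \emph{all} of the $T_j$ with $j\neq i$ to be trivialized simultaneously in one and the same diagram.

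Finally, and this is the heart of the matter, I would show that in $\tilde T^{(i)}$ the $j$-th tensorand lies in $\uqe$ for every $j\neq i$ and every state, by adapting the label-counting argument behind Proposition \ref{p} and the boundary-tangle case of Theorem \ref{s0} to the present local situation. Reading along $T_j$, its labels come only from its crossings with $T_i$; grouping these by the (algebraically cancelling, since $\mathrm{Lk}(T)=0$) punctures of the clean disk $D_j$ by $T_i$, one rewrites the product of labels so that each $\alpha$-type factor $\f{n}$ is matched with the $(q-1)$-powers carried by the $e$-type factors $e^n$ created along the same clasp, producing a product of $e$'s, $f$'s and $K^{\pm2}$'s, i.e.\ an element of $\uqe$. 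I expect the main obstacle to be exactly this $(q-1)$-integrality bookkeeping: one must prove that no divided power $\f{n}$ with $n\geq1$ ever survives \emph{bare} on $T_j$, uniformly over all states $s$, and it is here that the triviality of $T_j$ furnished by the disk $D_j$ is indispensable. The clasp computed for $J_B$ in Subsection \ref{univinv}, whose tensorands such as $\f{m}K^{-2n}e^n$ genuinely involve divided powers of $F$, illustrates that bare divided powers do persist once such a disk is absent, so the Brunnian hypothesis with $n\geq3$ cannot be dropped.
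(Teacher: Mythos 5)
Your first two steps coincide with the paper's proof: Brunnian-ness with $n\geq3$ forces $\mathrm{Lk}(T)=0$, and triviality of the proper subtangle $T\setminus T_i$ yields a diagram $\tilde T^{(i)}$ in which the components $T_j$, $j\neq i$, have no crossings among themselves, so every crossing involves $\tilde T_i$ (the extra system of disks $D_j$ you construct is not needed). The gap is in your third step, which you yourself flag as the heart of the matter: the mechanism you propose there does not work. First, the numerology is insufficient. From $f=(q-1)FK$ one gets $(q-1)^n\,\f{n}=q^{\frac{1}{2}n(n-1)}f^n/[n]_q!$, which still carries the denominator $[n]_q!$ and does \emph{not} lie in $\uq$; to push $\f{n}$ into $\uq$ you must multiply by the full $q$-factorial $\{n\}_q!=(q-1)^n[n]_q!$, not by $n$ factors of $(q-1)$. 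Second, the donors you designate --- $e$-type labels read along $T_j$ at the other crossings of ``the same clasp'' --- carry exponents given by \emph{independent} state values, and Proposition \ref{p1} must hold for every state; for a state assigning $m$ to the crossing that produces $\f{m}$ and $n\neq m$ to its clasp partner, there is nothing to match. Moreover the ``algebraic cancellation of punctures'' coming from $\mathrm{Lk}(T)=0$ is irrelevant to this integrality question: in the paper's argument that hypothesis enters to cancel the Cartan legs $D^{\pm1}$ after they are commuted to the left using $(x\otimes1)D^{\pm1}=D^{\pm1}(x\otimes K^{\mp|x|})$, and, via Theorem \ref{WW}, to remove the prefactor $\tilde D^{\mathrm{Lk}(T)}$; it never rescues a bare divided power.

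What actually closes the gap (the paper's Lemma \ref{cl2}) is a \emph{per-crossing, cross-strand} identity rather than a within-strand, per-clasp matching. Since $e^m=\{m\}_q!\,\e{m}$ and $\{m\}_q!\,\f{m}=q^{\frac{1}{2}m(m-1)}f^m$, the $R$-matrix admits two integral presentations,
\begin{equation*}
\alpha_m\otimes\beta_m=D\bigl(q^{\frac{1}{2}m(m-1)}\f{m}K^{-m}\otimes e^m\bigr)=D\bigl(q^{m(m-1)}f^mK^{-m}\otimes\e{m}\bigr),
\end{equation*}
and similarly for $R^{-1}$ and for images under powers of the antipode. So at each crossing between $\tilde T_i$ and $\tilde T_j$ one simply places the $\uq$-half ($f^m$ or $e^m$) on $\tilde T_j$ and the $\uqzq$-half (a divided power) on $\tilde T_i$: the factorial needed on $T_j$ is harvested from the \emph{opposite leg of the same crossing}, so the exponents match automatically, and the divided power $\e{m}$ left behind is harmless because the $i$-th tensorand is only required to land in the Lusztig-type form. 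Your scheme, which tries to repair the labels of $T_j$ using other labels of $T_j$ itself, cannot be completed; the correction must come from the other strand. Finally, even granting labels in $\uqzq\otimes\uq^{\otimes(n-1)}$ (say for $i=1$), you still need the paper's last step --- intersecting with $(\muqe)^{\otimes n}$, available by Theorem \ref{WW} since $\mathrm{Lk}(T)=0$, to descend to the even subalgebras $\uqzqe\otimes(\uqe)^{\otimes(n-1)}$ --- a step that your ``the $i$-th tensorand requires no argument'' framing quietly skips, since the required membership is in a tensor product of distinct subalgebras and is not decided tensorand by tensorand.
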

\begin{proof}[Proof of Theorem \ref{1} by assuming Proposition \ref{p1}]
For  each $i=1,\ldots ,n$,  by Theorem \ref{1} and Proposition \ref{p}, there is a diagram $\tilde T^{(i)}$ of $T$ such that 
\begin{align*}
J_{\tilde T^{(i)}, s}&\in \Big( (\uqe) ^{\otimes i-1}\otimes \uqzq ^{\ev}\otimes  (\uqe) ^{\otimes n-i}\Big)\cap \mathcal{F}_{|s|} ((\muqe)^{\otimes n})
\end{align*}
for any state  $s\in \mathcal {S}(\tilde T^{(i)})$.
Hence  we have
\begin{align*}
J_{T}&\in \Big \{ \Big( (\uqe) ^{\otimes i-1}\otimes \uqzq ^{\ev}\otimes  (\uqe) ^{\otimes n-i}\Big)\cap (\muqe)^{\otimes n} \Big\}\hat{}
\end{align*}
for all $i=1,\ldots ,n$.
\end{proof}

\begin{example}
For the Borromean bottom tangle $T_B$ depicted in Figure \ref{fig:borromean} (a), we have 
\begin{align*}
J_T &\in   \Big\{ \Big(\uqzqe \otimes  (\uqe) ^{\otimes 2}\Big) \cap (\muqe)^{ \otimes 3}\Big\}\hat {}
\\
&\cap \Big\{ \Big(\uqe \otimes \uqzqe \otimes  \uqe\Big) \cap (\muqe)^{ \otimes 3}\Big\}\hat {}
\\
&\cap \Big\{ \Big((\uqe) ^{\otimes 2} \otimes \uqzqe \Big) \cap (\muqe)^{ \otimes 3}\Big\}\hat {}.
\end{align*}
See Example \ref{ex} for explicit expressions of $J_T$.
\begin{figure}
\centering
\includegraphics[width=8cm,clip]{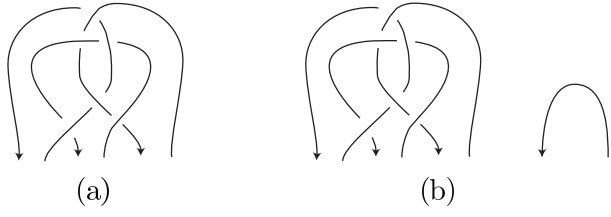}
\caption{(a) The Borromean bottom tangle $T_B$, (b) A bottom tangle $T'_B$}\label{fig:borromean}
\end{figure}%
\end{example}
\begin{example}
Let us add a trivial arc to the Borromean bottom tangle as in Figure \ref{fig:borromean} (b), and denote it by $T_B'$.
Note that  the bottom tangle $T_B'$ is not Brunnian but algebraically-split.
 We have 
\begin{align*}
J_{T_B'}=J_{T_B}\otimes 1\not \in \Big\{ \Big ((\uqe)^{\otimes 3}\otimes \uqzq^{\ev}\Big)\cap  (\muqe)^{ \otimes 4}\Big\}\hat{}.
\end{align*}
\end{example}

\section{Application to the colored Jones polynomial}\label{sect4}
In this section, we give an application  of Theorem  \ref{1} to the colored Jones polynomial of Brunnian links (Theorem \ref{4}).

\subsection{Colored Jones polynomials of algebraically-split links, ribbon links and  boundary links}\label{sect41}
We recall results for the colored Jones polynomials of algebraically-split links, ribbon links, and boundary links.

For $m\geq 1$, let  $V_m$ denote the $m$-dimensional irreducible representation of $U_h$.
Let $\mathcal{R}$  denote the representation ring  of $U_h$ over  $\mathbb{Q}(q^{\frac{1}{2}})$, i.e.,
$\mathcal{R}$ is the $\mathbb{Q}(q^{\frac{1}{2}})$-algebra 
\begin{align*}
\mathcal{R}= \Span _{\mathbb{Q}(q^{\frac{1}{2}})}\{V_m \  | \ m\geq 1\}
\end{align*}
with the multiplication induced by the tensor product.
It is well known that $\mathcal{R}=\mathbb{Q}(q^{\frac{1}{2}})[V_2].$ 

For an $n$-component link $L$ with $0$-framing, take a bottom tangle $T$ whose closure is $L$.
For $X_1,\ldots, X_n\in \mathcal{R}$, the colored Jones polynomial $J_{L; X_1,\ldots , X_n}$ of $L$
with the $i$th component $L_i$ colored by $X_i$ is given by
\begin{align*}
J_{L; X_1,\ldots , X_n}=(\tr_q^{X_1}\otimes \cdots\otimes  \tr_q^{X_n})(J_T)\in \mathbb{Q}(q^{\frac{1}{2}}),
\end{align*}
where, for $Y=\sum_{j}y_jV_j\in \mathcal {R}$ and $u\in U_h$,  
we set 
\begin{align*}
\tr_q^{Y}(u)=\tr^{Y}(K^{-1}u)=\sum_{j}y_j\tr^{V_j}(K^{-1}u).
\end{align*}

Habiro \cite{H2}  studied  the following elements in $\mathcal{R}$
\begin{align}
P_l&=\prod _{i=0}^{l-1}(V_2-q^{i+\frac{1}{2}}-q^{-i-\frac{1}{2}}) \in \mathcal{R},\label{ppr1}
\\
\tilde P'_l&=\frac{q^{\frac{1}{2}l}}{\{l\}_q!}P_l \in \mathcal{R}, \label{ppr2}
\end{align}
for $l\geq 0,$ which are used in an important technical step in his  construction of the unified Witten-Reshetikhin-Turaev invariants for integral homology spheres.

Recall the notation $\{l\}_{q,i} = \{l\}_q\{l-1\}_q\cdots \{l-i+1\}_q$ for $l\in \mathbb{Z}$, $i\geq 0$.
Theorem \ref{h0} implies the following.
\begin{theorem}[Habiro \cite{H2}]\label{h01}
Let $L$ be an $n$-component algebraically-split link with $0$-framing.
For $l_1,\ldots , l_n\geq 0$, we have
\begin{align}\label{z1}
J_{L; \tilde P'_{l_1},\ldots , \tilde P'_{l_n}}\in Z^{(l_1,\ldots, l_n)}_a.
\end{align} 
Here we set
\begin{align*}
Z^{(l_1,\ldots, l_n)}_a&=\frac{\{ 2l_{\max}+1\}_{q, l_{\max}+1}}{\{1\} _q} \Z,
\end{align*}
where $l_{\max}=\max (l_1,\ldots,l_n)$.
\end{theorem}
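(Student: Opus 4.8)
The plan is to derive the theorem from Theorem \ref{h0} by pushing the invariant through the quantum traces one tensorand at a time. First I would choose an algebraically-split, $0$-framed bottom tangle $T$ with $\cl(T)=L$, so that $\mathrm{Lk}(T)=0$ and Theorem \ref{h0} applies, giving $J_T\in\tmuqen{n}$. Since
\begin{align*}
J_{L;\tilde P'_{l_1},\ldots,\tilde P'_{l_n}}=(\tr_q^{\tilde P'_{l_1}}\otimes\cdots\otimes\tr_q^{\tilde P'_{l_n}})(J_T),
\end{align*}
everything reduces to the single-variable divisibility statement
\begin{align*}
\tr_q^{\tilde P'_l}(\tmuqe)\subseteq\frac{\{2l+1\}_{q,l+1}}{\{1\}_q}\,\Z\qquad(l\geq0),\quad(\ast)
\end{align*}
which in particular lands in $\Z$ because $\{1\}_q=q-1$ divides each factor $q^{j}-1$ of the numerator. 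Granting $(\ast)$, I would pick an index $i_0$ with $l_{i_0}=l_{\max}$ and apply $\tr_q^{\tilde P'_{l_{\max}}}$ to the $i_0$th tensorand first; by $(\ast)$ this yields an element of $\frac{\{2l_{\max}+1\}_{q,l_{\max}+1}}{\{1\}_q}\,\tmuqen{n-1}$, and applying the remaining traces, each of which sends $\tmuqe$ into $\Z$ again by $(\ast)$, preserves the divisibility. Note that this step compares no two colors, so only $l_{\max}$ survives.

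To prove $(\ast)$ I would first reduce to a PBW-type spanning set $\f{a}K^{2b}e^{c}$ ($a,c\geq0$, $b\in\mathbb{Z}$) of $\muqe$ and observe that, with respect to the grading $|e|=1$, $|F|=-1$, the functional $\tr^{V_m}(K^{-1}\,\cdot\,)$ annihilates every homogeneous element of nonzero degree; hence only the weight-zero elements $\f{c}K^{2b}e^{c}$ contribute. On $V_m$ the operator $\f{c}e^{c}$ is diagonal, so $\tr_q^{V_m}(\f{c}K^{2b}e^{c})$ is an explicit $q$-hypergeometric sum over the weights of $V_m$ whose summands are products of $q$-binomial coefficients and powers of $q$. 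The crucial feature is the special choice of $\tilde P'_l$: the roots $q^{i+\frac12}+q^{-i-\frac12}$ ($0\leq i\leq l-1$) of $P_l$ are precisely the eigenvalues of multiplication by $V_2$ on $V_1,\ldots,V_l$, so $\tr_q^{P_l}$ is supported on the $V_m$ with $m>l$. Expanding $\tilde P'_l=\frac{q^{l/2}}{\{l\}_q!}P_l$ and resumming, the resulting weighted trace collapses, by a quantum-Vandermonde/telescoping identity, to an integral combination of $q$-binomials times $\frac{\{2l+1\}_{q,l+1}}{\{1\}_q}$, which is $(\ast)$ on the weight-zero basis.

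Finally I would check that $\tr_q^{\tilde P'_l}$ is continuous for the filtration $\{\mathcal{F}_p(\muqe)\}$: raising the power of $e$ forces the $q$-hypergeometric sum to begin later and contributes extra quantum-integer factors, so $\tr_q^{\tilde P'_l}(\mathcal{F}_p(\muqe))$ becomes increasingly divisible as $p$ grows; the map therefore extends to $\tmuqe$ and the bound $(\ast)$ passes to the limit, and likewise tensorand-by-tensorand on $\tmuqen{n}$. The main obstacle is the second step: showing that the sharp factor $\frac{\{2l+1\}_{q,l+1}}{\{1\}_q}$---rather than merely some power of $q-1$---divides the trace of every weight-zero basis element. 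This is exactly where Habiro's design of $P_l$ and $\tilde P'_l$ is indispensable, and extracting the exact divisibility from the $q$-hypergeometric trace sum is the technical heart of the argument; by comparison the continuity/completion bookkeeping is routine.
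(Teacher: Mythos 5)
Your reduction of the theorem to the single\hbox{-}variable statement $(\ast)$ is exactly where the argument collapses: $(\ast)$ is false. By Lemma \ref{ls0} (3) with $l=i=1$, for any $j\in\mathbb{Z}$ the element $\f{1}K^{2j}e\in\muqe\subset\tmuqe$ satisfies
\begin{align*}
\tr_q^{\tilde P'_1}(\f{1}K^{2j}e)=\frac{q^{1/2}}{\{1\}_q}\,q^{j+\frac12}\{1\}_q=q^{j+1},
\end{align*}
a unit of $\Z$, which certainly does not lie in $\frac{\{3\}_{q,2}}{\{1\}_q}\Z=(q^2+q+1)(q^2-1)\Z$. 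What is true elementwise is only $\tr_q^{\tilde P'_l}(\muqe)\subset\Z$ (and $\tr_q^{\tilde P'_l}(\uqe)\subset I_l$, Corollary \ref{ls2}); no $q$\hbox{-}hypergeometric resummation can manufacture the factor $\frac{\{2l+1\}_{q,l+1}}{\{1\}_q}$ for arbitrary elements, so the step you defer to as ``the technical heart'' cannot be carried out. In particular your plan of applying $\tr_q^{\tilde P'_{l_{\max}}}$ to the $i_0$th tensorand first, and extracting the factor from that single tensorand, fails for any ordering of the traces.

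The missing idea is that the divisibility is a property of \emph{central} elements, and centrality is only available after the other $n-1$ traces have been taken. In Habiro's argument (whose mechanism is reproduced in this paper's proof of Theorem \ref{4}), one first applies $\id\otimes\tr_q^{\tilde P'_{l_2}}\otimes\cdots\otimes\tr_q^{\tilde P'_{l_n}}$ with $l_1=l_{\max}$ saved for last: each of these traces lands in $\Z$, so the result lies in $\tmuqe$, and it is moreover central because $J_T$ lies in the invariant part of $U_h^{\hat\otimes n}$ (this is the key use of ad\hbox{-}invariance; cf.\ Proposition \ref{hc}). The center $Z(\tmuqe)$ is topologically spanned by the elements $\sigma_p$, and the duality $\tr_q^{P''_l}(\sigma_m)=\delta_{l,m}$ of Proposition \ref{h3} yields $\tr_q^{\tilde P'_{l}}\big(Z(\tmuqe)\big)\subset\frac{\{2l+1\}_{q,l+1}}{\{1\}_q}\Z$ (Corollary \ref{ls3}); applying this with $l=l_{\max}$ to the central element produced in the first step gives the theorem. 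Since your proposal never invokes the invariance of $J_T$ — it would prove the same divisibility for $(\tr_q^{\tilde P'_{l_1}}\otimes\cdots\otimes\tr_q^{\tilde P'_{l_n}})(x)$ with $x\in\tmuqen{n}$ arbitrary, which the counterexample above (tensored with $1$'s) refutes — the gap is structural rather than a matter of completing a computation.
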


For $l\geq0,$ let $I_{l}$ denote the ideal in $\mathbb{Z}[q,q^{-1}]$ generated by  $\{l-k\}_q!\{k\}_q!$ for 
$k=0,\ldots, l$.
Theorem \ref{s0}  implies the following improvement of Theorem \ref{h01}. 
\begin{theorem}[\cite{sakie1, sakie2}]\label{s2}
Let $L$ be an $n$-component ribbon or boundary link with $0$-framing.
For $l_1,\ldots , l_n\geq 0$, we have
\begin{align}\label{z2}
J_{L; \tilde P'_{l_1},\ldots , \tilde P'_{l_n}}\in Z^{(l_1,\ldots, l_n)}_{r,b}.
\end{align} 
Here we set
\begin{align*}
Z^{(l_1,\ldots, l_n)}_{r,b}&=  \big(\prod _{1\leq i\leq n, i\neq i_M} I_{l_i} \big)\cdot Z^{(l_1,\ldots, l_n)}_{a}
\\
&=\frac{\{ 2l_{\max}+1\}_{q, l_{\max}+1}}{\{1\} _q}\prod _{1\leq i\leq n, i\neq i_M} I_{l_i},
\end{align*}
 where $l_{\max}=\max (l_1,\ldots,l_n)$ and  $i_M$  is an integer such that   $l_{i_M}=l_{\max}$.

\end{theorem}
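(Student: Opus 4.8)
The plan is to deduce this from Theorem \ref{s0} in exactly the way Theorem \ref{h01} is deduced from Theorem \ref{h0}, while keeping track of the extra divisibility carried by the smaller De Concini--Procesi form $\uqe$. First I would choose an $n$-component ribbon (resp.\ boundary) bottom tangle $T$ with $0$-framing and $\cl(T)=L$; such a $T$ exists by the standard presentation of ribbon/boundary links by ribbon/boundary bottom tangles. By Theorem \ref{s0} we then have $J_T\in\uqenh{n}$, the closure of $(\uqe)^{\otimes n}$, so that
\begin{align*}
J_{L;\tilde P'_{l_1},\ldots,\tilde P'_{l_n}}=(\tr_q^{\tilde P'_{l_1}}\otimes\cdots\otimes\tr_q^{\tilde P'_{l_n}})(J_T)
\end{align*}
is obtained by applying the single-variable quantum traces $\tr_q^{\tilde P'_{l_i}}$ to elements lying, factor by factor, in $\uqe$ rather than merely in $\muqe$.

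The key new input is a refined single-variable estimate. Writing $f=(q-1)FK$, a direct reordering gives $f^{k}=q^{-k(k-1)/2}\{k\}_q!\,\f{k}$, so every degree-$(-k)$ monomial of $\uqe$ carries an extra factor $\{k\}_q!$ compared with the corresponding monomial of $\muqe$ built from the divided power $\f{k}$. Since $\tr_q^{\tilde P'_{l}}$ annihilates homogeneous elements of nonzero degree, only monomials of the form $f^{k}K^{2c}e^{k}$ contribute; and since $\tilde P'_{l}$ is a polynomial of degree $l$ in $V_2$, the representations involved have dimension at most $l+1$, whence $e^{k}$ acts as $0$ and the trace vanishes for $k>l$. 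Thus only finitely many monomials survive in each tensorand, so the completion in $\uqenh{n}$ causes no analytic difficulty and the trace of the a priori infinite sum reduces to a finite sum. I would then compute $\tr_q^{\tilde P'_{l}}(f^{k}K^{2c}e^{k})$ on the weight basis of the relevant $V_{m}$ and show it is divisible by $\{k\}_q!\{l-k\}_q!$: the factor $\{k\}_q!$ comes from $f^{k}$ as above, while a complementary factor $\{l-k\}_q!$ arises from the weight-space trace over the $(l+1)$-dimensional representation. As $\{k\}_q!\{l-k\}_q!$ is by definition a generator of $I_{l}$, this gives an extra divisibility by $I_{l}$ for each component beyond Habiro's $\muqe$-estimate.

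Finally I would assemble the per-component estimates and combine them with the base bound $Z_a^{(l_1,\ldots,l_n)}$ of Theorem \ref{h01}, which depends only on $l_{\max}$. The finiteness noted above makes the multi-variable trace a finite $\Z$-linear combination of products of single-variable values, so the ideal containments multiply. The subtle point is that Habiro's collapse to the single factor $\frac{\{2l_{\max}+1\}_{q,l_{\max}+1}}{\{1\}_q}$ already absorbs the factorial contribution of the maximal component; consequently the fresh divisibility by $I_{l_i}$ can be used independently only for the $n-1$ indices $i\neq i_M$, producing precisely $Z^{(l_1,\ldots,l_n)}_{r,b}=\big(\prod_{i\neq i_M}I_{l_i}\big)\cdot Z^{(l_1,\ldots,l_n)}_a$.

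I expect the main obstacle to be exactly this last reconciliation: showing that the extra per-component divisibility by $I_{l_i}$ coming from $\uqe$ combines with Habiro's $l_{\max}$-governed base bound to give membership in $Z_{r,b}$ rather than some larger or incomparable ideal, and in particular pinning down why the maximal index $i_M$ must be excluded. This requires careful bookkeeping of the $q$-integer factors $\{k_i\}_q!$, $\{l_i-k_i\}_q!$ and $\frac{\{2l_{\max}+1\}_{q,l_{\max}+1}}{\{1\}_q}$ across the degree distributions $(k_1,\ldots,k_n)$ allowed by $J_T$, verifying the ideal containment term by term rather than relying on a naive product of the single-variable bounds.
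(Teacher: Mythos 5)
Your first half is sound and matches the standard route: presenting $L$ as the closure of a ribbon or boundary bottom tangle $T$ with $0$-framing, invoking Theorem \ref{s0} to get $J_T\in\uqenh{n}$, reducing to finite sums via the vanishing of $\tr_q^{\tilde P'_l}$ on terms with high powers of $e$, and the single-variable estimate $\tr_q^{\tilde P'_l}(\uqe)\subset I_l$ --- this is exactly Corollary \ref{ls2}, proved from Lemma \ref{ls0} together with $f^k=q^{-k(k-1)/2}\{k\}_q!\,\f{k}$, just as you describe. The gap is in the final assembly. Multiplying the per-component bounds over a finite state sum gives $J_{L;\tilde P'_{l_1},\ldots,\tilde P'_{l_n}}\in\prod_{i=1}^n I_{l_i}$, and combining this with Theorem \ref{h01} gives membership in the \emph{intersection} $\big(\prod_{i}I_{l_i}\big)\cap Z^{(l_1,\ldots,l_n)}_{a}$, not in the product $Z^{(l_1,\ldots,l_n)}_{r,b}=\big(\prod_{i\neq i_M}I_{l_i}\big)\cdot Z^{(l_1,\ldots,l_n)}_{a}$. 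These genuinely differ: for $n=3$ and $l_1=l_2=l_3=3$, Proposition \ref{cy} gives $I_3=(\Phi_1^3\Phi_2)$, while $Z_a=(\Phi_1^3\Phi_2^2\Phi_3\Phi_4\Phi_5\Phi_6\Phi_7)$, so the intersection is $(\Phi_1^9\Phi_2^3\Phi_3\Phi_4\Phi_5\Phi_6\Phi_7)$ whereas $Z_{r,b}=(\Phi_1^9\Phi_2^4\Phi_3\Phi_4\Phi_5\Phi_6\Phi_7)$; the intersection is strictly larger, so no bookkeeping that only uses these two kinds of bounds can close the argument.

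What is missing is the centrality mechanism, which is how the present paper proves the analogous Theorem \ref{4} (Theorem \ref{s2} itself is only cited here, but the proofs in the cited papers have the same architecture). One first applies the traces only to the $n-1$ components $i\neq i_M$; by Corollary \ref{ls2} the resulting element lies in $\big(\prod_{i\neq i_M}I_{l_i}\big)\cdot\muqe$, and because $J_T$ lies in the adjoint-invariant part of $U_h^{\hat\otimes n}$, this partial trace is also central, i.e.\ lies in $Z(\muqe)$ (the analogue of Proposition \ref{hc}). Since $\muqe$ has no nontrivial zero divisors and $I_l$ is principal (Proposition \ref{cy}), the scalar factors can be pulled out of the center: the partial trace lies in $\big(\prod_{i\neq i_M}I_{l_i}\big)\cdot Z(\muqe)$. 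Only then is the last trace $\tr_q^{\tilde P'_{l_{i_M}}}$ applied, and the factor $\{2l_{\max}+1\}_{q,l_{\max}+1}/\{1\}_q$ comes not from Theorem \ref{h01} as a black box but from Habiro's duality $\tr_q^{P''_l}(\sigma_m)=\delta_{l,m}$ between the basis $\{\sigma_m\}$ of $Z(\muqe)$ and the colors $P''_l$ (Proposition \ref{h3}, Corollary \ref{ls3}). This produces the product $\frac{\{2l_{\max}+1\}_{q,l_{\max}+1}}{\{1\}_q}\prod_{i\neq i_M}I_{l_i}$ directly, and it also answers your question of why $i_M$ is excluded: that component's trace is evaluated on central elements via Corollary \ref{ls3} rather than through $I_{l_{i_M}}$. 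Note finally that centrality of the partial trace is a global property of $J_T$ (adjoint invariance), invisible state by state in the expansion of the universal invariant, so the term-by-term verification you fall back on at the end cannot recover it.
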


For $m\geq 1$, let $\Phi _m=\prod _{d|m}(q^d-1)^{\mu (\frac{m}{d})}\in \mathbb{Z}[q]$ denote the $m$th cyclotomic polynomial, where $\prod _{d|m}$ denotes the
product  over all  positive divisors $d$ of $m$, and  $\mu$ is the M\"obius function.
 For $r\in \mathbb{Q}$, we denote by $\lfloor r \rfloor$  the largest integer smaller than  or  equal to $r$. 

In \cite{sakie3}, we study the ideal $I_l$ and prove the following result, which we use later.
\begin{proposition}[\cite{sakie3}]\label{cy}
For $l \geq 0$, the ideal $I_l$ is  the principal ideal generated by 
\begin{align}\label{lla}
g_l&=\prod_{m\geq 1}\Phi _{m}^{t_{l,m}},
\end{align}
where
\begin{align*}
t_{l,m}&=\begin{cases} 
\cyc{l+1}{m}-1\quad \quad \text{for } 1\leq m\leq l,
\\
0\quad \quad\quad \quad \quad \ \ \text{for } l<m.
\end{cases}
\end{align*}

\end{proposition}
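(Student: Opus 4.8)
The plan is to prove Proposition~\ref{cy} by computing the prime factorization of the generators $\{l-k\}_q!\{k\}_q!$ of $I_l$ in terms of cyclotomic polynomials, and then determining, for each cyclotomic factor $\Phi_m$, the minimum exponent appearing over all $k=0,\ldots,l$. Since $\mathbb{Z}[q,q^{-1}]$ is not a principal ideal domain in general, the first nontrivial point is to reduce to the polynomial ring $\mathbb{Z}[q]$ (equivalently, to observe that the $q$-factorial generators are, up to a unit $\pm q^j$, honest polynomials in $q$) so that the factorization into cyclotomics makes sense; I would record that each $\{i\}_q=q^i-1$ factors as $\prod_{m\mid i}\Phi_m$, so that the exponent of $\Phi_m$ in $\{i\}_q$ is $1$ if $m\mid i$ and $0$ otherwise.

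The key combinatorial computation is then to express the exponent of $\Phi_m$ in $\{l-k\}_q!\{k\}_q!$. Writing the $q$-factorial as a product of terms $\{j\}_q$, the exponent of $\Phi_m$ in $\{k\}_q!=\prod_{j=1}^{k}\{j\}_q$ is exactly the number of multiples of $m$ in $\{1,\ldots,k\}$, namely $\lfloor k/m\rfloor$, using the macro \cyc{k}{m}. Hence the exponent of $\Phi_m$ in the generator indexed by $k$ equals $\cyc{l-k}{m}+\cyc{k}{m}$. By Proposition~\ref{cy}'s claim, the generator of the ideal $I_l$ should have $\Phi_m$-exponent equal to the minimum of this quantity over $k$, so the heart of the argument is the elementary but delicate inequality
\begin{align*}
\min_{0\leq k\leq l}\Big(\cyc{l-k}{m}+\cyc{k}{m}\Big)=\cyc{l+1}{m}-1
\end{align*}
for $1\leq m\leq l$, together with the observation that the minimum is $0$ when $m>l$.

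To establish this minimization I would use the standard floor identity $\lfloor a/m\rfloor+\lfloor b/m\rfloor=\lfloor (a+b)/m\rfloor$ or $\lfloor (a+b)/m\rfloor-1$, according to whether the residues of $a$ and $b$ modulo $m$ add without or with a carry. Applied to $a=k$, $b=l-k$ with $a+b=l$, this shows the sum $\cyc{l-k}{m}+\cyc{k}{m}$ equals either $\cyc{l}{m}$ or $\cyc{l}{m}-1$, the smaller value being attained precisely when a carry occurs, i.e.\ when the residue of $k$ modulo $m$ is nonzero and small enough; for $1\leq m\leq l$ such a $k$ always exists, and a short check identifies $\cyc{l}{m}$ or $\cyc{l}{m}-1$ with $\cyc{l+1}{m}-1$ depending on whether $m\mid(l+1)$.

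Once the per-prime exponents are pinned down, the conclusion is that $I_l$ is generated by $\prod_{m}\Phi_m^{t_{l,m}}$ with $t_{l,m}=\cyc{l+1}{m}-1$: on one hand this element divides every generator (since its exponents are the pointwise minima), so it lies in the radical-closure direction, and on the other hand, because for each $m$ the minimum is actually attained by some generator, no higher power of any $\Phi_m$ divides all generators, so $g_l$ is the gcd and thus generates the ideal in $\mathbb{Q}[q]$. The remaining obstacle is purely arithmetic rather than conceptual: I must confirm that passing from the gcd in $\mathbb{Q}[q]$ back to the ideal in $\mathbb{Z}[q,q^{-1}]$ introduces no extra integer content, i.e.\ that the integer contents of the generators $\{l-k\}_q!\{k\}_q!$ already have gcd a unit (this uses that the constant term / leading behaviour of products of $q^i-1$ are $\pm1$ up to powers of $q$), which is where I would expect to spend the most care to get a clean statement over $\mathbb{Z}[q,q^{-1}]$ rather than merely over $\mathbb{Q}[q]$.
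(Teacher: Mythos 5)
First, note that the paper never actually proves Proposition \ref{cy}: it is imported wholesale from the reference \cite{sakie3} (listed as ``in preparation''), so there is no internal proof to compare against and your proposal must stand on its own. The valuation half of your argument is correct: each generator is, up to a unit, a primitive polynomial in $\mathbb{Z}[q]$; the exponent of $\Phi_m$ in $\{l-k\}_q!\{k\}_q!$ is $\lfloor (l-k)/m\rfloor+\lfloor k/m\rfloor$; and the minimum of this over $0\leq k\leq l$ is $\lfloor (l+1)/m\rfloor -1$ for $1\leq m\leq l$ (no carry can occur when $m\mid l+1$, while a carry is forced at $k=(l\bmod m)+1$ otherwise, and in both cases the result agrees with $\lfloor (l+1)/m\rfloor-1$) and is $0$ for $m>l$. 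Since $\mathbb{Z}[q,q^{-1}]$ is a UFD, this correctly identifies $g_l$ as the gcd of the generators and yields the inclusion $I_l\subseteq (g_l)$.

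The genuine gap is the reverse inclusion $g_l\in I_l$, which is the real content of the proposition, and the route you sketch for it would fail. $\mathbb{Z}[q,q^{-1}]$ is not a B\'ezout domain: the ideal generated by a family of elements is in general strictly smaller than the principal ideal generated by their gcd, and primitivity (trivial integer content) of the generators does not repair this. Concretely, $q-1$ and $q+1$ are both primitive and coprime in $\mathbb{Q}[q]$, yet
\begin{align*}
(q-1,\,q+1)=(q-1,\,2)\subsetneq \mathbb{Z}[q,q^{-1}],
\end{align*}
since the quotient is $\mathbb{F}_2$; so ``gcd of the integer contents is a unit'' is true here and still does not give the unit ideal. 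What must actually be shown is that the cofactors $\{l-k\}_q!\{k\}_q!/g_l$, $k=0,\ldots,l$, generate the \emph{unit ideal} of $\mathbb{Z}[q,q^{-1}]$, and this is a nontrivial arithmetic fact about cyclotomic polynomials: two distinct $\Phi_a,\Phi_b$ generate the unit ideal over $\mathbb{Z}$ only when $a/b$ is not a prime power (e.g.\ $(\Phi_1,\Phi_3)=(q-1,3)$ is proper), so prime obstructions of exactly the kind in the displayed example can in principle occur and must be ruled out by explicit combinations or an induction. For instance, already for $l=4$ one has $g_4=(q-1)^4(q+1)$ and needs an identity such as $\{3\}_q!\{1\}_q!-q\{2\}_q!\{2\}_q!=g_4(\Phi_3-q\Phi_2)=g_4$; nothing in your proposal produces such combinations in general. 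This missing half is precisely what the cited reference \cite{sakie3} has to supply.
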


\subsection{Result for the colored Jones polynomial of  Brunnian links}
The following is  an application of Theorem \ref{1} to the colored Jones polynomial of Brunnian links, which we prove in Section \ref{pr2}.
\begin{theorem}\label{4}
Let $L$ be an $n$-component Brunnian link  with $n\geq3$.
For $l_1,\ldots , l_n\geq 0$, we have
\begin{align}\label{z3}
J_{L; \tilde P'_{l_1},\ldots , \tilde P'_{l_n}}&\in Z^{(l_1,\ldots, l_n)}_{Br}.
\end{align}
Here we set
\begin{align*}
Z^{(l_1,\ldots, l_n)}_{Br}&=\frac{\{ 2l_{\max}+1\}_{q, l_{\max}+1}}{\{1\} _q\{l_{\min}\}_q!} \prod _{1\leq i\leq n, i\neq i_M,i_m} I_{l_i},
\end{align*}
where $l_{\max}=\max (l_1,\ldots,l_n)$, $l_{\min}=\min(l_1,\ldots,l_n)$ and $i_M,i_m$, $i_M\neq i_m$, are two integers such that   $l_{i_M}=l_{\max}$, $l_{i_m}=l_{\min}$, respectively.
\end{theorem}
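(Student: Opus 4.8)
The plan is to pass from the link $L$ to a Brunnian bottom tangle and then transport the algebraic refinement of Theorem \ref{1} through the quantum trace. First I would use Habiro's \cite[Proposition 12]{H5} to choose a Brunnian bottom tangle $T$ with $\cl(T)=L$, so that
\begin{align*}
J_{L;\tilde P'_{l_1},\ldots,\tilde P'_{l_n}}=(\tr_q^{\tilde P'_{l_1}}\otimes\cdots\otimes\tr_q^{\tilde P'_{l_n}})(J_T).
\end{align*}
Since $T$ is Brunnian it has $0$-framing and trivial linking matrix, so no $\tilde D^{\mathrm{Lk}(T)}$ correction appears. By Theorem \ref{1} we have $J_T\in U_{Br}^{(n)}$, and I would single out the factor of the intersection indexed by $i=i_M$: via the diagram $\tilde T^{(i_M)}$ furnished by Proposition \ref{p1}, the $i_M$th tensorand of each $J_{\tilde T^{(i_M)},s}$ lies in $\uqzq^{\ev}$ while every other tensorand lies in $\uqe$. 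This is the natural choice, because the maximal slot is the one that absorbs the unbounded degree and hence receives no $I$-improvement even in the ribbon/boundary case (Theorem \ref{s2}); placing the unrefined $\uqzq^{\ev}$ there therefore costs nothing that was not already lost.

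The core of the argument is a set of per-slot estimates for $\tr_q^{\tilde P'_l}$, which factors over the tensorands. For the distinguished slot $i_M$, the element sits only in $\uqzq^{\ev}$, and I would invoke exactly Habiro's computation underlying Theorem \ref{h01}: the vanishing of $\tr_q^{\tilde P'_l}$ on tensorands of sufficiently high degree truncates the state sum and produces the universal factor $\tfrac{\{2l_{\max}+1\}_{q,l_{\max}+1}}{\{1\}_q}$ governed by the maximal color. For every slot lying in $\uqe$ I would instead prove the sharp divisibility $\tr_q^{\tilde P'_l}(u)\in I_l$, exactly as in the proof of Theorem \ref{s2}; here Proposition \ref{cy} is decisive, realizing $I_l$ as the principal ideal generated by the cyclotomic product $g_l$ and thereby turning the trace values into explicit products of cyclotomic polynomials. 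Alongside this sharp bound I would record the coarse bound $\tr_q^{\tilde P'_l}(u)\in\tfrac{1}{\{l\}_q!}\Z$, valid for every $u\in\uqe$ from the factorization $\tilde P'_l=\tfrac{q^{l/2}}{\{l\}_q!}P_l$ together with the integrality of $\tr_q^{P_l}$ on $\uqe$.

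I would then assemble the state sum $J_T=\sum_s J_{\tilde T^{(i_M)},s}$, using Proposition \ref{p} to place $J_{\tilde T^{(i_M)},s}$ in $\mathcal F_{|s|}\big((\muqe)^{\otimes n}\big)$, which after truncation leaves only finitely many contributing states and secures convergence in each tensorand. The delicate point is that controlling this multivariable summation in the \emph{ordinary} completion $\{\cdot\}\hat{}$, rather than the refined completion $\uqenh{n}$ available for ribbon and boundary tangles, forces me to relax exactly one of the $\uqe$ slots from the sharp bound $I_l$ to the coarse bound $\tfrac{1}{\{l\}_q!}$. I would relax the minimal slot $i_m$, which is optimal: for colors $a\le b$ the two candidate fractional ideals arising from sacrificing one slot are $\tfrac{g_b}{\{a\}_q!}\Z$ and $\tfrac{g_a}{\{b\}_q!}\Z$, and comparing the cyclotomic exponents supplied by Proposition \ref{cy} shows $\tfrac{g_b}{\{a\}_q!}\Z\subseteq\tfrac{g_a}{\{b\}_q!}\Z$, so sacrificing the smaller color yields the smaller (stronger) ideal. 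Multiplying the three kinds of contributions — the Habiro factor from $i_M$, the coarse factor $\tfrac{1}{\{l_{\min}\}_q!}$ from $i_m$, and the sharp factors $I_{l_i}$ from the remaining $n-2$ slots — produces exactly $Z^{(l_1,\ldots,l_n)}_{Br}$.

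The main obstacle I anticipate is this last step: reconciling the per-slot trace estimates with the coarser convergence control of the ordinary completion, and proving rigorously that precisely one $\uqe$-slot, and no more, must be surrendered to the coarse bound. Establishing the sharp estimate $\tr_q^{\tilde P'_l}(u)\in I_l$ and tracking its interaction with the coupling $D$-matrices inside $J_{\tilde T^{(i_M)},s}$, and then verifying that the minimal-color slot can always be the relaxed one, is where the real work lies; the cyclotomic bookkeeping of Proposition \ref{cy} is what makes the optimal allocation to $i_m$ both meaningful and provable.
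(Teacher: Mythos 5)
Your proposal assigns the wrong mechanisms to the wrong tensor slots, and both halves of the assignment break down. First, the factor $\frac{\{2l_{\max}+1\}_{q,l_{\max}+1}}{\{1\}_q}$ is \emph{not} a per-slot estimate for $\tr_q^{\tilde P'_{l}}$ on $\uqzqe$, and no such estimate exists: the only bound available on the $\uqzqe$ slot is the coarse one, $\tr_q^{\tilde P'_{l}}(\uqzqe)\subset \frac{1}{\{l\}_q!}\Z$ (Proposition \ref{ls}), whose denominators genuinely occur because of the elements $\begin{bmatrix} H \\ s\end{bmatrix}_q$. In the paper the maximal-color factor comes from \emph{centrality}, which you never invoke: since $J_T$ is invariant under the adjoint action, tracing out the other $n-1$ slots leaves an element of $Z(\muqe)$ (Proposition \ref{hc}); one then intersects $g_{l_3}\cdots g_{l_n}\muqe$ with $Z(\muqe)$ using that $\muqe$ has no nontrivial zero divisors, and applies Habiro's orthogonality $\tr_q^{P''_l}(\sigma_m)=\delta_{l,m}$ (Proposition \ref{h3}, Corollary \ref{ls3}) to the basis $\{\sigma_p\}$ of the center. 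Because you place the $\uqzqe$ slot at $i_M$ and trace it with a (nonexistent) per-slot bound, the best you could actually collect there is $\frac{1}{\{l_{\max}\}_q!}$, which does not give the theorem. In the paper the roles are exactly reversed: the $\uqzqe$ slot is the \emph{minimal} one, and $i_M$ is kept untraced until the final, central step.

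Second, your order of operations has a convergence gap that your proposed fix does not address. With the diagram $\tilde T^{(i_M)}$, tracing a slot $j\neq i_M$ only kills states that are large on crossings between strands $i_M$ and $j$; states that are large on the \emph{self-crossings} of strand $i_M$ survive all of these traces, so after tracing the $\uqe$ slots you still face an infinite state sum, and divisibility of each summand by the principal ideal $\big(\prod_{j\neq i_M} g_{l_j}\big)\Z$ does not pass to such a limit in the completion. The paper avoids this precisely by tracing \emph{first} at the slot matched to the diagram: every crossing of $\tilde T^{(i)}$ lies on strand $i$, so $(\id^{\otimes i-1}\otimes\tr_q^{\tilde P'_{l_i}}\otimes\id^{\otimes n-i})(J_{\tilde T^{(i)},s})=0$ whenever $|s|>l_i$, and the state sum collapses to a finite sum (proof of Proposition \ref{iin}); all subsequent steps then take place in uncompleted algebras. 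This is the real reason exactly one slot must be surrendered to the coarse factor $\frac{1}{\{l\}_q!}$: it is the price of the first, truncating trace, which is forced to be the $\uqzqe$ slot, and the paper puts that slot at $i_m$ to minimize the loss. Your optimality comparison between sacrificing $i_m$ versus another slot is therefore a red herring: in your scheme all slots other than $i_M$ are $\uqe$ slots carrying the sharp bound $I_{l_j}$ of Corollary \ref{ls2}, so if your assembly converged you would have ``proved'' a statement strictly stronger than Theorem \ref{4}; what blocks it is the missing truncation-and-centrality mechanism, not the cyclotomic bookkeeping of Proposition \ref{cy}.
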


Since a  Brunnian link  $L$ with $n\geq 3$ components is algebraically-split with $0$-framing, 
$L$ satisfies both (\ref{z1}) and (\ref{z3}).
Note that there is no inclusion which satisfies for all $l_1,\ldots , l_n\geq 0$ between  $Z^{(l_1,\ldots, l_n)}_{a}$ and $Z^{(l_1,\ldots, l_n)}_{Br}$.
For example, we have  $Z^{(2,2,2,2)}_{a}\not \subset Z^{(2,2,2,2)}_{Br}$ and  $Z^{(2,2,2,2)}_{Br}\not \subset Z^{(2,2,2,2)}_{a}$
since
\begin{align*}
Z^{(2,2,2,2)}_{a}&=\frac{\{ 5\}_{q, 3}}{\{1\} _q} \Z
\\&=(q-1)^2(q+1)(q^2+q+1)(q^2+1)(q^4+q^3+q^2+q^1+1)\Z,
\\
 Z^{(2,2,2,2)}_{Br}&=\frac{\{ 5\}_{q, 3}}{\{1\} _q\{2\}_q!} \{1\}_q^4 \Z
 \\
 &=(q-1)^4(q^2+q+1)(q^2+1)(q^4+q^3+q^2+q^1+1)\Z.
\end{align*}

For $l_1,\ldots , l_n\geq 0$, set
\begin{align*}
\tilde Z^{(l_1,\ldots, l_n)}_{Br}= Z^{(l_1,\ldots, l_n)}_{a}\cap Z^{(l_1,\ldots, l_n)}_{Br}.
\end{align*}

The above argument implies  the following  refinement of  Theorem \ref{4}.

\begin{theorem}\label{41}
Let $L$ be an $n$-component Brunnian link with $n\geq3$.
For $l_1,\ldots , l_n\geq 0$,
we have
\begin{align*}
J_{L; \tilde P'_{l_1},\ldots , \tilde P'_{l_n}}&\in \tilde Z^{(l_1,\ldots, l_n)}_{Br}.
\end{align*}
\end{theorem}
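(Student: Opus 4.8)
The plan is to obtain Theorem \ref{41} for free by combining the two divisibility statements that the link $L$ already satisfies and then intersecting the two resulting ideals. The entire mathematical content is carried by Theorem \ref{h01} and Theorem \ref{4}; the refinement is the purely formal observation that an element lying in two ideals lies in their intersection.

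First I would check that the hypotheses of Theorem \ref{h01} are met. An $n$-component Brunnian link with $n\geq 3$ is algebraically-split, since every $2$-component sublink is a proper sublink, hence trivial, so all pairwise linking numbers vanish; and it has $0$-framing, since each single component is a $1$-component proper sublink, hence a $0$-framed unknot. (This is exactly the observation recorded in the remark preceding Theorem \ref{4}.) Therefore Theorem \ref{h01} applies and gives
\[
J_{L; \tilde P'_{l_1},\ldots , \tilde P'_{l_n}}\in Z^{(l_1,\ldots, l_n)}_a .
\]
On the other hand, $L$ is Brunnian with $n\geq 3$, so Theorem \ref{4} applies directly and gives
\[
J_{L; \tilde P'_{l_1},\ldots , \tilde P'_{l_n}}\in Z^{(l_1,\ldots, l_n)}_{Br} .
\]

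Since the element lies in both ideals $Z^{(l_1,\ldots, l_n)}_a$ and $Z^{(l_1,\ldots, l_n)}_{Br}$ of $\Z$, it lies in their intersection, which is by definition $\tilde Z^{(l_1,\ldots, l_n)}_{Br}=Z^{(l_1,\ldots, l_n)}_a\cap Z^{(l_1,\ldots, l_n)}_{Br}$, completing the proof. There is no genuine obstacle here: the only point deserving a word of care is the verification that Brunnian-ness (for $n\geq 3$) indeed supplies the algebraically-split, $0$-framed hypothesis needed to invoke Theorem \ref{h01}, and this is immediate from the definition. I would expect the substantive work to be entirely upstream, in Theorem \ref{4}, with Theorem \ref{41} serving only as a clean packaging of the sharpened divisibility.
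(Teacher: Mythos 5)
Your proposal is correct and is essentially identical to the paper's own argument: the paper notes that a Brunnian link with $n\geq 3$ components is algebraically-split with $0$-framing, hence satisfies both the conclusion of Theorem \ref{h01} and that of Theorem \ref{4}, and then defines $\tilde Z^{(l_1,\ldots, l_n)}_{Br}$ as the intersection of the two ideals so that Theorem \ref{41} follows immediately. Your added verification that Brunnian-ness forces vanishing linking numbers and $0$-framing is the same observation the paper records just before Theorem \ref{4}.
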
 

For $n\geq3$, we have
\begin{align*}
Z^{(l_1,\ldots, l_n)}_{r,b}= &\big(\prod _{1\leq i\leq n, i\neq i_M} I_{l_i} \big)\cdot Z^{(l_1,\ldots, l_n)}_{a}
\\
=&\big(\{l_{\min}\}_q!I_{l_{\min}}\big)\cdot Z^{(l_1,\ldots, l_n)}_{Br}.
\end{align*}
Thus, comparing Theorem \ref{41} with Theorems \ref{h01} and  \ref{s2}, we have the following for $n\geq 3$.
\begin{center}
\begin{tabular}{ccc}
     \{$n$-comp. alg. split links with $0$-framing\} &$\stackrel {J_{*; \tilde P'_{l_1},\ldots , \tilde P'_{l_n}}}{\longrightarrow}$  & $Z^{(l_1,\ldots, l_n)}_{a}$  \\
    $\cup$ &  & $\cup$ \\
     \{$n$-comp. Brunnian links\} & $\stackrel {J_{*; \tilde P'_{l_1},\ldots , \tilde P'_{l_n}}}{\longrightarrow}$ & $ \tilde Z^{(l_1,\ldots, l_n)}_{Br}$\\
     &  & $\cup$ \\
   \{$n$-comp. ribbon or boundary links with $0$-framing\}  & $\stackrel {J_{*; \tilde P'_{l_1},\ldots , \tilde P'_{l_n}}}{\longrightarrow}$ & $Z^{(l_1,\ldots, l_n)}_{r,b}$
\end{tabular}
\end{center}
\begin{remark}
By Proposition \ref{cy}, the ideals $Z^{(l_1,\ldots, l_n)}_{a}, Z^{(l_1,\ldots, l_n)}_{r,b}$, $Z^{(l_1,\ldots, l_n)}_{Br}$
and  $\tilde Z^{(l_1,\ldots, l_n)}_{Br}$ are principal,  each generated by a product of cyclotomic polynomials. See \cite{sakie3} for details and examples.
\end{remark}
\section{Proofs }\label{proof}
In this section, we prove Proposition \ref{p1} and Theorem \ref{4}.
\subsection{Proof of  Proposition  \ref{p1}}\label{proof1}
We use the following lemma.
\begin{lemma}\label{cl2}
For $m\geq 0$ and $k,l\in \mathbb{Z}$, we have 
\begin{align*}
S^k(\alpha^{\pm } _m)\otimes S^l(\beta^{\pm } _m)\in D^{\pm (-1)^{k+l}}\big((\uqzq\otimes \uq)\cap(\uq\otimes \uqzq)\big),
\end{align*}
\end{lemma}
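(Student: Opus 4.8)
The plan is to prove the case $k=l=0$ by an explicit computation and then to propagate it through the powers of the antipode. In the formal-sum convention $\alpha^{\pm}_m\otimes\beta^{\pm}_m$ is a single element of $U_h^{\hat\otimes2}$, so $S^k(\alpha^{\pm}_m)\otimes S^l(\beta^{\pm}_m)=(S^k\otimes S^l)(\alpha^{\pm}_m\otimes\beta^{\pm}_m)$ and it suffices to track how $S^k\otimes S^l$ acts on the explicit expressions for $R^{\pm1}$. The key algebraic input, obtained directly from (\ref{deff})--(\ref{defe}), is the pair of conversion identities
\begin{align*}
e^m=\{m\}_q!\,\e{m},\qquad f^m=q^{-\binom{m}{2}}\{m\}_q!\,\f{m},
\end{align*}
so that $\e{m}=e^m/\{m\}_q!$ and $\f{m}=q^{\binom{m}{2}}f^m/\{m\}_q!$: the same factor $\{m\}_q!$ relates the generator-power $e^m$ of $\uq$ to the generator $\e{m}$ of $\uqzq$, and symmetrically on the $F$-side.

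For $k=l=0$, removing the factor $D^{\pm1}$ from $\alpha^{\pm}_m\otimes\beta^{\pm}_m$ leaves, up to an integer power of $q$, the core $\f{m}K^{-m}\otimes e^m$ in the $+$ case and $\f{m}\otimes K^{-m}e^m$ in the $-$ case. In each core the first tensorand lies in $\uqzq$ and the second in $\uq$, giving membership in $\uqzq\otimes\uq$. Rewriting the same core by the identities above --- replacing $\f{m}$ by $q^{\binom{m}{2}}f^m/\{m\}_q!$ in the first slot and $e^m$ by $\{m\}_q!\,\e{m}$ in the second --- the two occurrences of $\{m\}_q!$ cancel across the tensor product, leaving a power of $f$ times $K^{-m}$ in $\uq$ and $\e{m}$ (possibly times $K^{-m}$) in $\uqzq$, so the core also lies in $\uq\otimes\uqzq$. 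This cancellation of $\{m\}_q!$ across the two tensorands is the heart of the lemma and settles both signs for $k=l=0$.

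For general $k,l$ I would use two facts. First, $S^{\pm1}$ preserves both $\uq$ and $\uqzq$, as one checks on the generators (for instance $S(K)=K^{-1}$, and $S$ sends $e,f$ into $\uq$ and $\e{n},\f{n}$ into $\uqzq$), while $S^2$ is conjugation by a power of $K$ and hence preserves these subalgebras as well. Second, since $D=\exp(\frac{h}{4}H\otimes H)$ and $S(H)=-H$, one has $(S^k\otimes S^l)(D)=D^{(-1)^{k+l}}$. Applying $S^k\otimes S^l$ to $D^{\pm1}$ times the core converts the $D$-factor to $D^{\pm(-1)^{k+l}}$ and sends the core to $(S^k\otimes S^l)(\text{core})$, which by the first fact remains in $(\uqzq\otimes\uq)\cap(\uq\otimes\uqzq)$; any residual powers of $K$ created when commuting $D$ back into position are absorbed using the conjugation rules $D(E\otimes1)D^{-1}=E\otimes K$ and its variants, and are harmless because $K^{\pm1}$ lies in every subalgebra involved.

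The main obstacle is exactly this bookkeeping of the $D$-factor for mixed parities of $k$ and $l$: when one of $k,l$ is even and the other odd, $S^k\otimes S^l$ is neither an algebra homomorphism nor an anti-homomorphism on $U_h^{\hat\otimes2}$, so it cannot simply be distributed over the product $D\cdot(\text{core})$. I would handle this by expanding $D$ as the power series $\sum_t\frac{1}{t!}(\frac{h}{4})^t H^t\otimes H^t$, applying $S^k\otimes S^l$ termwise (using $S^a(H^t)=(-1)^{at}H^t$ and the (anti)homomorphism property on each single tensorand), and carefully tracking the reordering of each $H^t$ past the core; this reordering is precisely where the compensating powers of $K$ appear, and the computation must confirm that they recombine into the asserted $D^{\pm(-1)^{k+l}}$.
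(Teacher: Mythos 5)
Your proposal is correct and follows essentially the same route as the paper's proof: the same rewriting of the cores via $e^m=\{m\}_q!\,\e{m}$ and $\f{m}=q^{\frac{1}{2}m(m-1)}f^m/\{m\}_q!$ (the paper's (\ref{al1})--(\ref{al2}), whose whole point is exactly the cancellation of $\{m\}_q!$ across the tensor product), the same facts $(S^k\otimes S^l)(D^{\pm1})=D^{\pm(-1)^{k+l}}$ and stability of $(\uqzq\otimes \uq)\cap(\uq\otimes \uqzq)$ under $S^k\otimes S^l$, and the same commutation rule (\ref{ls4}) to absorb the resulting powers of $K$. If anything, you are more explicit than the paper about the mixed-parity case ($k+l$ odd, where $S^k\otimes S^l$ is neither a homomorphism nor an anti-homomorphism of $U_h^{\hat\otimes 2}$): the paper only works out $k=l=1$ and leaves the rest implicit, while your termwise expansion of $D$ does carry through and produces $D^{\pm(-1)^{k+l}}$ times an element of the intersection, up to harmless powers of $K$.
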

\begin{proof}
For $m\geq 0$, we have
\begin{align}
\begin{split}
\alpha _m\otimes \beta _m&=D\big(q^{\frac{1}{2}m(m-1)}\tilde {F}^{(m)}K^{-m}\otimes e^m\big)
\\
&=D\big(q^{m(m-1)} f^mK^{-m}\otimes \tilde {E}^{(m)}\big) 
\\
&\in D\big((\uqzq\otimes \uq)\cap(\uq\otimes \uqzq)\big),
\end{split}\label{al1}
\\
\begin{split}
\alpha ^-_m\otimes \beta^- _m&=D^{-1}\big((-1)^{m}\tilde {F}^{(m)}\otimes K^{-m}e^m\big)
\\
&=D^{-1}\big((-1)^{n}q^{\frac{1}{2}m(m-1)}f^m\otimes K^{-m}\tilde {E}^{(m)}\big) 
\\
&\in D^{-1}\big((\uqzq\otimes \uq)\cap(\uq\otimes \uqzq)\big).
\end{split}\label{al2}
\end{align}

For $k,l\in \mathbb{Z}$, we have
\begin{align}
&(S^k\otimes S^l)(D^{\pm 1})=D^{\pm (-1)^{k+l}},
\\
&(S^k\otimes S^l)\big((\uqzq\otimes \uq)\cap(\uq\otimes \uqzq)\big)=(\uqzq\otimes \uq)\cap(\uq\otimes \uqzq).
\end{align}

For $x\in U_h$ homogeneous, we have
\begin{align}\label{ls4}
(x\otimes 1)D^{\pm 1}=D^{\pm 1}(x\otimes K^{\mp |x|}).
\end{align}

Now, (\ref{al1})--(\ref{ls4}) imply the assertion. For example, we have
\begin{align*}
S(\alpha _m)\otimes S(\beta _m)&=(S\otimes S)(\alpha_m \otimes \beta_m)
\\
&\in  (S\otimes S)\Big(D\big((\uqzq\otimes \uq)\cap(\uq\otimes \uqzq)\big)\Big)
\\
&\subset  \big((\uqzq\otimes \uq)\cap(\uq\otimes \uqzq)\big) D
\\
&=D \big((\uqzq\otimes \uq)\cap(\uq\otimes \uqzq)\big).
\end{align*}
\end{proof}

\begin{proof}[Proof of Proposition \ref{p1}]
Let $T=T_1\cup \cdots \cup T_n$ be an $n$-component Brunnian bottom tangle with $n\geq 3$.
We prove the assertion for $i=1$, i.e., we prove that there is a diagram $\tilde T$ of $T$ such that
\begin{align}\label{des}
J_{\tilde T, s}\in  &\uqzqe \otimes \uqe  \otimes \uqe \otimes \cdots\otimes \uqe
\end{align} 
for any state $s\in \mathcal{S}(\tilde T)$.
The other cases $2\leq i\leq n$ are similar.

Since $T$ is Brunnian, the subtangle $T_2\cup \cdots \cup T_n$  is trivial.
Thus $T$ has a diagram $\tilde T=\tilde T_1\cup \tilde T_2\cup  \cdots \cup \tilde T_n$ whose subdiagram $\tilde T_2\cup \cdots \cup \tilde T_n$ has no  crossing.  See Figure \ref{fig:borromeanb} for an example of such a diagram for the Borromean rings $T_B$.
\begin{figure}
\centering
\includegraphics[width=7cm,clip]{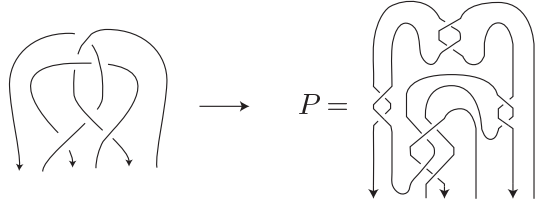}
\caption{Borromean rings $T_B$ and its diagram  $P=P_1\cup P_2 \cup P_3$ such that $P_2 \cup P_3$ has no crossing}\label{fig:borromeanb}
\end{figure}%

We prove that $\tilde T$ satisfies (\ref{des}). 
Note that $\tilde T$ has only  two types of crossings as follows.
\begin{itemize}
\item[] Type A: Crossings between $\tilde T_1$ and $\tilde T_2\cup \cdots \cup \tilde T_n$ 
\item[] Type B: Self crossings of $\tilde T_1$ 
\end{itemize}
Recall from the definition of $J_{\tilde T,s}$ in Section \ref{univinv} the labels which are put on the diagram. 
For the crossings of type A, by Lemma \ref{cl2}, we can assume that the labels on $\tilde T_1$
are legs of copies of $D^{\pm 1}$ and elements of  $\uqzq$, and the labels on $\tilde T_2\cup \cdots \cup \tilde T_n$ are legs of copies of $D^{\pm 1}$ and elements of $\uq$.
For the crossings of type B, we assume that the labels on $\tilde T_1$ are legs of copies of $D^{\pm 1}$ and elements of 
 $\uqzq$.
 See Figure \ref{fig:P} for example, where $\diamond$s denote elements in $\uqzq$ and $\circ$s denote elements in $\uq$.

\begin{figure}
\centering
\includegraphics[width=13cm,clip]{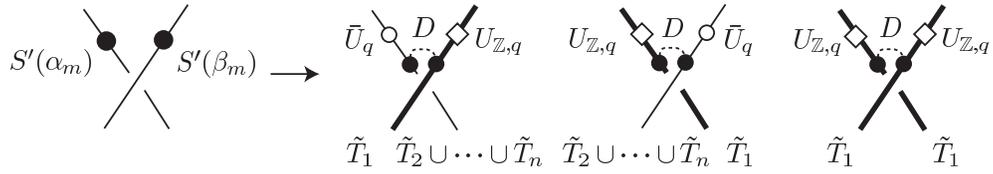}
\caption{The labels on a crossing}\label{fig:P}
\end{figure}%
\begin{figure}
\centering
\includegraphics[width=4cm,clip]{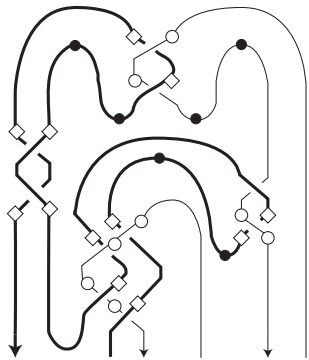}
\caption{Labels except $D^{\pm 1}$s, where the black dots are $K^{\pm1 }$}\label{fig:P2}
\end{figure}%

Now, except copies of  $D^{\pm 1}$, all labels on $\tilde T_1$  are elements of $\uqzq$, and all labels on $\tilde T_2\cup \cdots \cup \tilde T_n$ 
are elements of $\uq$, see Figure \ref{fig:P2}  for example.
We gather every copy of $D^{\pm 1}$ to the leftmost of the expression of $J_{\tilde T,s}$ by using (\ref{ls4}),
and cancel these as $\tilde D^{\mathrm{Lk}(T)}=1$
since the matrix $\mathrm{Lk}(T)$ is $0$. Then we have 
\begin{align*}
J_{\tilde{T}, s}\in \uqzq \otimes \uq \otimes \uq \otimes \cdots\otimes \uq.
\end{align*}
By Theorem \ref{h0}, we have 
\begin{align*}
J_{\tilde{T}, s}\in &\big(\uqzq \otimes \uq \otimes \uq \otimes \cdots\otimes \uq \big)\cap (\muqe)^{\otimes n}
\\
\subset &\uqzqe \otimes \uqe  \otimes \uqe \otimes \cdots\otimes \uqe.
\end{align*}
This completes the proof.
\end{proof}

\begin{example}\label{ex}

The following is the universal $sl_2$ invariant of the Borromean bottom tangle calculated by using the diagram Figure \ref{fig:borromean} (a) (cf. \cite{H2}).

\begin{align*}
J_{T_B}&=\sum_{m_1,m_2,m_3,n_1,n_2,n_3\geq 0} q^{m_3+n_3}(-1)^{n_1+n_2+n_3}q^{\sum _{i=1}^3\big(-\frac{1}{2}m_i(m_i+1)
-n_i +m_im_{i+1}-2m_in_{i-1}\big)}
\\
&\tilde F^{(n_3)}e^{m_1}\tilde F^{(m_3)}e^{n_1}K^{-2m_2}\otimes  \tilde F^{(n_1)}e^{m_2}\tilde F^{(m_1)}e^{n_2}K^{-2m_3}\otimes \tilde  F^{(n_2)}e^{m_3}\tilde F^{(m_2)}e^{n_3}K^{-2m_1}
\\
&\in (\tmuq^{\ev}) ^{\tilde \otimes 3},
\end{align*}
where the index $i$ should be considered modulo $3$.

By using the diagram $P$ in Figure \ref{fig:borromeanb},  after the two self-crossings of the leftmost strand cancel each other out, 
we also have the following expression of $J_{T_B}$.

\begin{align*}
J_{T_B}&=\sum_{g,h,k,l,m,n \geq 0}\sum_{i=0}^m\sum_{j=0}^nt_{g,h,i,j,k,l,m,n}(q)
\\
&K^{-2(h+k)}\f{g}\e{h}\e{j}\f{i}\e{k}\f{l}\f{m-i}\e{n-j}\otimes K^{2(k-l-m)}f^{n}e^{m}\otimes K^{-2(h-i+j+k)}e^{l}f^{h+k}e^g
\\
&\in   \Big\{ \Big(\uqzqe \otimes  (\uqe) ^{\otimes 2}\Big) \cap (\muqe)^{ \otimes 3}\Big\}\hat {},
\end{align*}
where
\begin{align*}
t_{g,h,i,j,k,l,m,n}(q)=(-1)^{g+h+m+n+i+j}&q^{-2g(3h+k)+\frac{1}{2}h(h-1)+h(2l-1)+k(2l+2n-i-j-1)-\frac{1}{2}l(l-1)}
\\
&\cdot q^{-l(2n+i-3j)-m(6n-i-j)+\frac{1}{2}n(n-1)-n(j+1)-\frac{1}{2}i(i-1)+\frac{1}{2}j(j-1)}.
\end{align*}
\end{example}
\subsection{Proof of Theorem \ref{4}}\label{pr2}
In this section, we  prove  Theorem \ref{4}.

First of all, we recall generators of $\uqe$ and $\uqzqe$ as $\mathbb{Z}[q,q^{-1}]$-modules.
The following Lemma is a  variant of a well-known fact about the integral form of
De Concini and Procesi (cf. \cite{De, H2}).
\begin{lemma}\label{F2}
 $ \bar {U}_q^{\ev}$ is freely $\mathbb{Z}[q,q^{-1}]$-spanned by the elements $f^iK^{2j} e^k$ with $i,k\geq 0$ and $j\in \mathbb{Z}$. 
\end{lemma}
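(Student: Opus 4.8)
The plan is to establish that the set $\{f^i K^{2j} e^k \mid i,k\geq 0,\ j\in\mathbb{Z}\}$ both spans $\uqe$ over $\Z$ and is linearly independent, giving a free basis. First I would show spanning. Since $\uqe$ is the $\Z$-subalgebra generated by $e$, $f$, $K^2$, and $K^{-2}$, it suffices to prove that any word in these generators can be rewritten as a $\Z$-linear combination of ordered monomials $f^i K^{2j} e^k$. For this I would invoke the commutation relations coming from the defining relations of $U_h$. The essential fact is that moving a $K^{\pm 2}$ past $e$ or $f$ only introduces a power of $q$: from $HE-EH=2E$ and $HF-FH=-2F$ one gets $K^2 e = q^{2} e K^2$ and $K^2 f = q^{-2} f K^2$ (with the corresponding relations for $K^{-2}$), so all the diagonal factors can be collected in the middle without leaving $\Z$. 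The only genuine content is the relation allowing one to move an $e$ to the right past an $f$, i.e. an $ef$-to-$fe$ rewriting rule, which I expect to take the form $ef = q^{2} f e + (\text{a }\Z\text{-combination of terms } K^{\pm 2}\text{ or }1)$, derived from $EF-FE=(K-K^{-1})/(q^{1/2}-q^{-1/2})$ after substituting $e=(q^{1/2}-q^{-1/2})E$ and $f=(q-1)FK$. I would verify that the resulting correction term lies in the $\Z$-span of $\{K^2, K^{-2}, 1\}$, so that the straightening process terminates and stays inside the claimed span.

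Next I would address linear independence. The cleanest route is to use that $U_h$ itself has a triangular decomposition and that the elements $F^a H^b E^c$ (or equivalently the PBW-type monomials $\tilde F^{(a)} K^{b} \tilde E^{(c)}$) are $\mathbb{Q}[[h]]$-linearly independent in $U_h$. Since each $f^i K^{2j} e^k$ is, up to an invertible scalar in $\mathbb{Z}[q^{\pm 1/2}]$ times a power of $q$, equal to $F^i K^{i} K^{2j} E^k$ and hence has a distinct weight decomposition $(i,2j+i,k)$ determined by its $F$-degree, $K$-degree, and $E$-degree, distinct triples $(i,j,k)$ produce monomials with distinct PBW leading terms. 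A nontrivial $\Z$-linear relation among the $f^i K^{2j} e^k$ would therefore descend to a nontrivial $\mathbb{Q}[[h]]$-relation among the corresponding PBW monomials of $U_h$, contradicting their independence. I would phrase this as: the map sending $f^i K^{2j} e^k$ to its image under the standard $\mathbb{Q}[[h]]$-basis of $U_h$ is injective on monomials with distinct index triples, so freeness over $\Z$ follows.

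The main obstacle I anticipate is bookkeeping in the straightening step, specifically confirming that the $ef\to fe$ correction term stays within the integral subalgebra $\uqe$ rather than merely within $U_h$. The subtlety is that the commutator $EF-FE$ produces $(K-K^{-1})/(q^{1/2}-q^{-1/2})$, which a priori involves the half-integer element $K=q^{H/2}$ and a denominator $q^{1/2}-q^{-1/2}$; one must check that after the substitutions $e=(q^{1/2}-q^{-1/2})E$, $f=(q-1)FK$, the half-powers and the denominator cancel so that $ef-q^{?}fe$ genuinely lands in $\Z[K^2,K^{-2}]$. I would carry out this single commutator computation carefully and record it as the one nonroutine identity; everything else reduces to repeated application of it together with the $q$-commutation of $K^{\pm 2}$ past $e,f$. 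Once that identity is pinned down, both spanning and the termination of the normal-ordering algorithm follow formally, and independence is handled by the PBW comparison above.
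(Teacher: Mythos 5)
The paper offers no proof of this lemma at all: it is stated as ``a variant of a well-known fact about the integral form of De Concini and Procesi,'' with a citation to \cite{De, H2}. Your argument is the standard proof lying behind that citation, and it is correct. The one identity you deferred comes out as follows: from $EF-FE=(K-K^{-1})/(q^{1/2}-q^{-1/2})$, $KE=qEK$, $KF=q^{-1}FK$, one computes
\begin{align*}
ef \;=\; (q^{1/2}-q^{-1/2})(q-1)\,EFK \;=\; q^{-1}fe+(q-1)(K^{2}-1),
\end{align*}
so the correction term lies in the $\Z$-span of $\{K^{2},1\}$ and, in particular, involves only \emph{even} powers of $K$, so the straightening never leaves $\uqe$; your guessed coefficient $q^{2}$ is wrong (it is $q^{-1}$), but you explicitly flagged the exponent as to-be-computed, and what matters for termination is only that the correction has strictly smaller $(e,f)$-degree. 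Combined with $K^{2}e=q^{2}eK^{2}$ and $K^{2}f=q^{-2}fK^{2}$, which you stated correctly, this gives spanning. For independence, $f^{i}K^{2j}e^{k}=(q-1)^{i}(q^{1/2}-q^{-1/2})^{k}q^{-i(i-1)/2}\,F^{i}K^{i+2j}E^{k}$, and distinct triples $(i,j,k)$ yield distinct triples $(i,\,i+2j,\,k)$, so your PBW comparison goes through; note only that the scalar $(q-1)^{i}(q^{1/2}-q^{-1/2})^{k}$ is not \emph{invertible} in $\mathbb{Z}[q^{\pm 1/2}]$ as you assert, but it is nonzero in $\mathbb{Q}[[h]]$, and since $\Z\hookrightarrow\mathbb{Q}[[h]]$ (via $q=e^{h}$) is injective, nonzero is all the argument needs.
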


Set
\begin{align*}
\begin{bmatrix} H \\s\end{bmatrix}_q=&\frac{(K^2-1)(q^{-1}K^2-1)\cdots (q^{-s+1}K^2-1)}{\{s\}_q!}
\\
=&\frac{1}{\{s\}_q!}\sum_{p=0}^{s}(-1)^{s-p}q^{\frac{1}{2}p(p+1)-sp}\begin{bmatrix} s \\p\end{bmatrix}_qK^{2p}
\end{align*}
for  $s\geq 0$.

The following Lemma is a  variant of a well-known fact about 
Lusztig's integral form (cf. \cite{L}).

\begin{lemma}\label{F1}
 $\uqzqe$ is  $\mathbb{Z}[q,q^{-1}]$-spanned by the elements $\f{i}K^{2j}\begin{bmatrix} H\\s\end{bmatrix}_q \e{k}$ with $i,k,s\geq 0$ and $j\in \mathbb{Z}$. 
\end{lemma}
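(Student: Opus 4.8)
The plan is to prove that $\uqzqe$ is $\mathbb{Z}[q,q^{-1}]$-spanned by the claimed elements $\f{i}K^{2j}\begin{bmatrix} H\\s\end{bmatrix}_q \e{k}$ by combining a triangular-decomposition (PBW-type) argument with a careful analysis of the Cartan part. Recall that $\uqzqe$ is generated as a $\mathbb{Z}[q,q^{-1}]$-algebra by $K^{\pm 2}$, the divided powers $\f{i}=\tilde F^{(i)}$, and $\e{k}=\tilde E^{(k)}$. The standard strategy is to show that an arbitrary product of generators can be rewritten, modulo the $\mathbb{Z}[q,q^{-1}]$-span of the proposed spanning set, into the claimed ``sorted'' form with all $\tilde F$-powers on the left, the Cartan part in the middle, and all $\tilde E$-powers on the right.

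First I would establish the key commutation relations in $U_h$ needed to reorder the generators. The essential ones are: the commutation of $\e{k}$ past $\f{i}$, which by the quantum Serre/divided-power relations produces a sum $\sum_t \f{i-t}\,(\text{Cartan factor})\,\e{k-t}$ where the Cartan factor is exactly a $\mathbb{Z}[q,q^{-1}]$-combination of terms $K^{2j}\begin{bmatrix} H\\s\end{bmatrix}_q$ (this is the even-version of Lusztig's well-known identity $\tilde E^{(k)}\tilde F^{(i)}=\sum_t \tilde F^{(i-t)}\binom{H;2t-i-k}{t}_q\tilde E^{(k-t)}$, adapted to $K^2$); and the commutation of $K^{\pm 2}$ past $\e{k}$ and $\f{i}$, which only introduces powers of $q$. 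I would verify that the binomial coefficients $\begin{bmatrix} H\\s\end{bmatrix}_q$ as defined, together with the shifted variants $\begin{bmatrix} H;c\\s\end{bmatrix}_q$ that arise, all lie in the $\mathbb{Z}[q,q^{-1}]$-span of $\{K^{2j}\begin{bmatrix} H\\s'\end{bmatrix}_q\}$; this is the purely Cartan-subalgebra bookkeeping, and it follows from the explicit expansion of $\begin{bmatrix} H\\s\end{bmatrix}_q$ in powers of $K^2$ given in the statement, together with the fact that the products $\begin{bmatrix} H\\s\end{bmatrix}_q\begin{bmatrix} H\\s'\end{bmatrix}_q$ expand back into $\mathbb{Z}[q,q^{-1}]$-combinations of single $\begin{bmatrix} H\\s''\end{bmatrix}_q$ times powers of $K^2$.

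With these relations in hand, the span claim follows by an induction. Any word in the generators can, using the reordering relations above, be pushed toward the normal form: repeatedly move every $\e{k}$ to the right of every $\f{i}$, at the cost of producing lower-order terms (in total $E$- and $F$-degree) carrying Cartan factors, and absorb all the $K^{2j}$ factors and the resulting Cartan binomials into the middle using the closure of $\{K^{2j}\begin{bmatrix} H\\s\end{bmatrix}_q\}$ under multiplication. Since each application of the $\e{k}\f{i}$-swap strictly decreases the combined degree of the ``unsorted'' part, the induction terminates, and every element of $\uqzqe$ is exhibited as a $\mathbb{Z}[q,q^{-1}]$-linear combination of the stated basis-type elements. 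I would note that unlike Lemma \ref{F2} for $\uqe$, I only claim a spanning set and not freeness, matching the statement.

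The main obstacle I anticipate is the middle step: verifying that the Cartan factors produced by the $\tilde E^{(k)}\tilde F^{(i)}$ reordering genuinely stay inside the $\mathbb{Z}[q,q^{-1}]$-span of $\{K^{2j}\begin{bmatrix} H\\s\end{bmatrix}_q\}$ rather than forcing odd powers $K^{2j+1}$ or denominators. The subtlety is that Lusztig's identity naturally involves $\begin{bmatrix} H;c\\t\end{bmatrix}_q$ with $K$ (odd powers) in the non-even integral form, so I must check that working throughout with $K^2$ (the even form) keeps everything even and integral — in particular that the shift identities relating $\begin{bmatrix} H;c\\s\end{bmatrix}_q$ to $\begin{bmatrix} H\\s\end{bmatrix}_q$ introduce only $K^{2j}$ and coefficients in $\mathbb{Z}[q,q^{-1}]$. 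This is precisely the ``variant'' content alluded to in the statement, and it is where the even restriction must be used carefully; once it is confirmed, the rest is the routine reordering induction.
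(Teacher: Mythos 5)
Your plan is sound, but note that the paper itself contains no proof of this lemma: it is stated as ``a variant of a well-known fact about Lusztig's integral form (cf.\ \cite{L})'', so the only thing to compare against is that citation. What you sketch is precisely the standard argument underlying the cited fact --- sorting words into triangular form via the divided-power commutation formula, plus closure of the Cartan part under shifts and products --- so in effect you have supplied the proof that the paper outsources to the literature. Your diagnosis of the one genuine subtlety (evenness and integrality of the Cartan factors) is also the right one, and it does resolve, for a reason worth making explicit in a full write-up: the paper's divided powers are not Lusztig's, but the twisted versions $\f{n}=F^nK^n/[n]_q!$ and $\e{n}=(q^{-1/2}E)^n/[n]_q!$, normalized by $[n]_q=\{n\}_q/\{1\}_q$ rather than by balanced $v$-integers. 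The $K^n$ twist in $\f{n}$ is exactly what absorbs the odd powers of $K$ occurring in Lusztig's Cartan factors. The rank-one check already confirms this:
\begin{align*}
\e{1}\f{1}=q^{-1/2}EFK=q^{-1}\f{1}\e{1}+\frac{K^2-1}{q-1}
=q^{-1}\f{1}\e{1}+\begin{bmatrix} H\\1\end{bmatrix}_q,
\end{align*}
with no odd power of $K$ or of $q^{1/2}$ surviving; the general identity $\e{a}\f{b}=\sum_t \f{b-t}\,C_t\,\e{a-t}$ with $C_t$ in the $\Z$-span of the elements $K^{2j}\begin{bmatrix} H\\s\end{bmatrix}_q$ follows by the usual induction and keeps this shape. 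Together with $\f{i}\f{i'}\in\Z\,\f{i+i'}$, $\e{k}\e{k'}\in\Z\,\e{k+k'}$, the $q$-power commutation of $K^{\pm2}$ past $\e{k}$ and $\f{i}$, and the Pascal-type recursion
\begin{align*}
\begin{bmatrix} H;c+1\\s\end{bmatrix}_q=\begin{bmatrix} H;c\\s\end{bmatrix}_q+q^{c-s+1}K^2\begin{bmatrix} H;c\\s-1\end{bmatrix}_q
\end{align*}
for the shifted binomials, your reordering induction closes the argument. So there is no gap in the plan; the only caveat is that a complete proof must actually establish the even integral commutation formula for these twisted generators rather than quote Lusztig's identity verbatim, since his applies to different elements.
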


For the elements $P_l, \tilde P'_l\in \mathcal{R}$ defined in (\ref{ppr1}),  (\ref{ppr2}) in Section \ref{sect41}, 
 we have the following results.

 \begin{lemma}[{Habiro \cite[Lemma 8.8]{H2}}]\label{ls0}
 \begin{itemize}
\item[\rm{(1)}]
If $l,i,i'\geq 0$, $i\not = i'$, and $j\in \mathbb{Z}$, then we have $\tr_q^{P_l}(\f{i}K^{2j}e^{i'})=0$.

\item[\rm{(2)}] 
If $0 \leq l < i$ and $j\in \mathbb{Z}$, then we have $\tr_q^{P_l}(\f{i}K^{2j}e^{i})=0$.

\item[\rm{(3)}]
If $0\leq i\leq l$ and $j\subset \mathbb{Z}$, then we have
\begin{align*}
\tr_q^{P_l}(\f{i}K^{2j}e^{i})=q^{\frac{1}{2}l-lj+2ij+i^2-il}\{l\}_q!\{l-i\}_q!\begin{bmatrix}
j+l-1 \\
l-i
\end{bmatrix}_q\begin{bmatrix}
j-1 \\
l-i
\end{bmatrix}_q.
\end{align*}
\end{itemize}
 \end{lemma}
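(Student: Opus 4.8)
The statement is a quantum-trace computation, so the plan is to reduce it to the irreducible summands of $P_l$ and then diagonalize on each $V_m$ in a weight basis. First I would use linearity: write $P_l=\sum_{m\geq 1}c_{l,m}V_m$ as a finite $\mathbb{Q}(q^{1/2})$-combination, so that $\tr_q^{P_l}(\f{i}K^{2j}e^{i'})=\sum_m c_{l,m}\tr^{V_m}(K^{-1}\f{i}K^{2j}e^{i'})$ and it suffices to understand each $V_m$-term. Part (1) then falls out of the $\mathbb{Z}$-grading on $U_h$: the element $\f{i}K^{2j}e^{i'}$ is homogeneous of degree $i'-i$, so when $i\neq i'$ it carries each weight space of $V_m$ into a \emph{different} weight space and hence has vanishing trace on every $V_m$; summing against the $c_{l,m}$ gives $\tr_q^{P_l}=0$.

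For the case $i=i'$ the operator $\f{i}K^{2j}e^{i}$ is diagonal in the weight basis $v_0,\dots,v_{m-1}$ of $V_m$, and I would compute its eigenvalue on $v_k$ directly: $e^{i}$ sends $v_k$ to a multiple of $v_{k-i}$ (nonzero only for $k\geq i$), the factors $K^{2j}$ and the $K^{-1}$ coming from $\tr_q$ contribute weight powers of $q$, and $\f{i}=F^iK^i/[i]_q!$ returns $v_{k-i}$ to $v_k$. Collecting the $q$-integers produced by the $E$- and $F$-actions into two $q$-binomial coefficients yields
\[
\tr_q^{V_m}(\f{i}K^{2j}e^{i})
= q^{\,i^2+2ij-\frac{i}{2}}\{i\}_q!\sum_{k=i}^{m-1}
q^{(m-1-2k)\left(j+\frac{i-1}{2}\right)}
\begin{bmatrix} k\\ i\end{bmatrix}_q
\begin{bmatrix} m-1-k+i\\ i\end{bmatrix}_q,
\]
an explicit $q$-hypergeometric sum in $k$ (note that its prefactor already reproduces the $q^{i^2+2ij}$ appearing in the answer).

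It then remains to sum these contributions against the coefficients $c_{l,m}$ of $P_l$. I would expand $P_l=\prod_{r=0}^{l-1}(V_2-q^{r+1/2}-q^{-r-1/2})$ in the basis $\{V_m\}$ via the Clebsch--Gordan rule $V_2\cdot V_m=V_{m-1}+V_{m+1}$; introducing a formal variable with $V_2=x+x^{-1}$ identifies $V_m$ with a Chebyshev-type polynomial and exhibits the roots of $P_l$ at $x=q^{\pm(r+1/2)}$, which is what makes the threshold $i\leq l$ versus $l<i$ natural. Substituting the formula above and interchanging the two finite sums turns $\tr_q^{P_l}(\f{i}K^{2j}e^i)$ into a single alternating sum of $q$-binomials; the two regimes of the statement are exactly its two outcomes, total cancellation when $l<i$ (part (2)) and the closed form $q^{\frac{1}{2}l-lj+2ij+i^2-il}\{l\}_q!\{l-i\}_q!\begin{bmatrix} j+l-1\\ l-i\end{bmatrix}_q\begin{bmatrix} j-1\\ l-i\end{bmatrix}_q$ when $i\leq l$ (part (3)).

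The reduction and the single-$V_m$ diagonalization are routine; the real work, and the step I expect to be the main obstacle, is the final $q$-binomial summation identity that must simultaneously force the exact vanishing of part (2) and produce the precise two-factor product of part (3). I would attack it with a $q$-Vandermonde (that is, $q$-Chu) evaluation, very likely after re-indexing so that the $P_l$-coefficients telescope and isolate the top component $V_{l+1}$; the delicate bookkeeping is to check that all of the $j$-dependence collapses exactly into the two binomials $\begin{bmatrix} j+l-1\\ l-i\end{bmatrix}_q$ and $\begin{bmatrix} j-1\\ l-i\end{bmatrix}_q$, whose vanishing for $l<i$ then accounts uniformly for both cases.
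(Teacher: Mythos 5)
First, a point of comparison: the paper does not prove this lemma at all. It is quoted, with attribution, from Habiro \cite[Lemma 8.8]{H2} and used as an external input, so there is no internal proof for your attempt to match or diverge from --- it has to stand on its own as a from-scratch argument. Judged that way, your reduction is sound in outline and part (1) is genuinely complete: $\f{i}K^{2j}e^{i'}$ is homogeneous of nonzero degree $i'-i$, so it shifts every one-dimensional weight space of each $V_m$, all diagonal entries of $K^{-1}\f{i}K^{2j}e^{i'}$ vanish, and the trace is zero on every constituent of $P_l$. You also make part (2) harder than it is in your own framework: no alternating-sum cancellation is involved. Since $P_l$ is a degree-$l$ polynomial in $V_2$, Clebsch--Gordan gives $P_l\in\Span_{\mathbb{Q}(q^{1/2})}\{V_1,\dots,V_{l+1}\}$, and $e^i$ kills $V_m$ whenever $m\le i$; as $l<i$ forces $m\le l+1\le i$ for every constituent, the operator $\f{i}K^{2j}e^i$ is already zero on each $V_m$ --- in your own trace formula the sum over $k$ runs from $i$ to $m-1$ and is simply empty. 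This termwise vanishing is the same mechanism the paper itself invokes to obtain (\ref{itt}) in the proof of Proposition \ref{iin}.

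The genuine gap is part (3), which is the real content of the lemma, and your proposal stops exactly there. Three things are missing. (a) The coefficients $c_{l,m}$ in $P_l=\sum_{m\le l+1}c_{l,m}V_m$ are never computed: observing that $P_l$, viewed through $V_2=x+x^{-1}$, has roots at $x=q^{\pm(r+\frac{1}{2})}$ locates its zeros but is not an expansion in the basis $\{V_m\}$, and extracting $c_{l,m}$ is itself a nontrivial $q$-binomial computation. (b) The displayed formula for $\tr_q^{V_m}(\f{i}K^{2j}e^i)$ is asserted rather than derived; with the paper's asymmetric conventions ($\{i\}_q=q^i-1$, $e=(q^{1/2}-q^{-1/2})E$, $\f{i}=F^iK^i/[i]_q!$) the exponent bookkeeping is exactly where such a computation goes wrong silently, and nothing in the proposal lets a reader check it. (c) Above all, the final step --- interchanging the two sums and evaluating the resulting alternating double sum to $q^{\frac{1}{2}l-lj+2ij+i^2-il}\{l\}_q!\{l-i\}_q!\begin{bmatrix}j+l-1\\l-i\end{bmatrix}_q\begin{bmatrix}j-1\\l-i\end{bmatrix}_q$ --- is only conjectured, with ``$q$-Vandermonde, very likely after re-indexing'' named as the intended tool. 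That identity, including the collapse of all $j$-dependence into precisely those two binomials, is the statement to be proved; announcing that $q$-Chu--Vandermonde should yield it is a plan, not a proof. As written, your argument establishes (1), is trivially repaired to establish (2), and leaves (3) open; the honest options are to carry out (a)--(c) in full or to do what the paper does and cite Habiro.
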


For $l\geq0,$ recall the ideal $I_{l}$  in $\mathbb{Z}[q,q^{-1}]$, which is  generated by  $\{l-i\}_q!\{i\}_q!$ for 
$i=0,\ldots, l$.
 \begin{corollary}[Habiro \cite{H2}]\label{ls2}
For $l\geq 0$, we have $\tr_q^{\tilde P'_l}(\uqe)\subset I_{l}$.
\end{corollary}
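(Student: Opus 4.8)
The plan is to reduce the statement to a computation on a spanning set of $\uqe$ and then invoke Habiro's trace formula (Lemma \ref{ls0}). By Lemma \ref{F2}, $\uqe$ is $\mathbb{Z}[q,q^{-1}]$-spanned by the monomials $f^iK^{2j}e^k$ with $i,k\geq 0$ and $j\in\mathbb{Z}$, and since $\tr_q^{\tilde P'_l}$ is $\mathbb{Z}[q,q^{-1}]$-linear it suffices to show $\tr_q^{\tilde P'_l}(f^iK^{2j}e^k)\in I_l$ for each such monomial. First I would rewrite $f^i$ in terms of $\f{i}$: using $f=\{1\}_qFK$ together with the commutation relation $KF=q^{-1}FK$ one gets $(FK)^i=q^{-\binom i2}F^iK^i$, and since $F^iK^i=[i]_q!\,\f{i}$ and $\{1\}_q^i[i]_q!=\{i\}_q!$ this gives
\[
f^i=q^{-\binom i2}\{i\}_q!\,\f{i},\qquad\text{so}\qquad f^iK^{2j}e^k=q^{-\binom i2}\{i\}_q!\,\f{i}K^{2j}e^k .
\]
This already exposes the factorial $\{i\}_q!$ that will contribute to $I_l$.

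Next I would feed this into Lemma \ref{ls0}, recalling that $\tr_q^{\tilde P'_l}=\frac{q^{l/2}}{\{l\}_q!}\tr_q^{P_l}$. If $i\neq k$, part (1) gives $\tr_q^{P_l}(\f{i}K^{2j}e^k)=0$, and if $i=k>l$ part (2) gives $0$ as well; in both cases the trace lies in $I_l$ trivially. The only surviving case is $i=k$ with $0\leq i\leq l$, where part (3) produces a factor $\{l\}_q!\{l-i\}_q!$ times a power of $q$ and two Gaussian binomials. The key observation is that the prefactor $\frac{1}{\{l\}_q!}$ of $\tilde P'_l$ cancels exactly against the $\{l\}_q!$ coming from part (3), leaving
\[
\tr_q^{\tilde P'_l}(f^iK^{2j}e^i)=q^{N}\{i\}_q!\{l-i\}_q!\begin{bmatrix} j+l-1\\ l-i\end{bmatrix}_q\begin{bmatrix} j-1\\ l-i\end{bmatrix}_q
\]
for a suitable \emph{integer} exponent $N$ (the half-integer contributions $l/2$ from $\tilde P'_l$ and $\tfrac12 l$ from part (3) combine to $q^l$, the rest being manifestly integral).

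I would finish by observing that $q^N$ is a unit in $\mathbb{Z}[q,q^{-1}]$ and that the two Gaussian binomials are Laurent polynomials in $q$, since their upper indices are integers and their lower indices $l-i$ are nonnegative. Hence the whole expression lies in $\{i\}_q!\{l-i\}_q!\cdot\mathbb{Z}[q,q^{-1}]$, which is contained in $I_l$ because $\{i\}_q!\{l-i\}_q!$ is one of the generators of $I_l$ for $i=0,\ldots,l$. As every basis monomial maps into $I_l$, linearity yields $\tr_q^{\tilde P'_l}(\uqe)\subset I_l$. The argument is short once Lemma \ref{ls0} is in hand; the points needing care are the conversion $f^i\mapsto\f{i}$ (which is precisely where the extra factor $\{i\}_q!$ is created) and checking that the leftover $q$-power is an honest integer power while the binomials are integral, so that neither disturbs membership in $I_l$. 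The main conceptual obstacle is simply to recognize that the division by $\{l\}_q!$ built into $\tilde P'_l$ is exactly calibrated to cancel the $\{l\}_q!$ in Habiro's formula, leaving precisely a generator of $I_l$.
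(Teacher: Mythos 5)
Your proposal is correct and follows essentially the same route as the paper's own proof: spanning $\uqe$ by the monomials $f^iK^{2j}e^k$ via Lemma \ref{F2}, rewriting $f^i=q^{-\frac{1}{2}i(i-1)}\{i\}_q!\,\f{i}$, killing the off-diagonal and $i>l$ cases by Lemma \ref{ls0} (1), (2), and using Lemma \ref{ls0} (3) so that the $\{l\}_q!$ in Habiro's formula cancels the denominator of $\tilde P'_l$, leaving $\{i\}_q!\{l-i\}_q!$ times a unit and integral Gaussian binomials. No gaps; this matches the paper's argument step for step.
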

\begin{proof}
The assertion follows from Lemma \ref{F2}, Lemma \ref{ls0} (1), (2),  and 
\begin{align*}
\tr_q^{\tilde P'_l}(f^iK^{2j}e^{i})=&q^{-\frac{1}{2}i(i-1)}\{i\}_q!\tr_q^{\tilde P'_l}(\f{i}K^{2j}e^{i})
\\
=&q^{-\frac{1}{2}i(i-1)}\{i\}_q!\frac{q^{\frac{1}{2}l}}{\{l\}_q!}\tr_q^{ P_l}(\f{i}K^{2j}e^{i})
\\
=&q^{-\frac{1}{2}i(i-1)+l-lj+2ij+i^2-il}\{i\}_q!\{l-i\}_q!\begin{bmatrix}
j+l-1 \\
l-i
\end{bmatrix}_q\begin{bmatrix}
j-1 \\
l-i
\end{bmatrix}_q \in I_l
\end{align*}
for $0\leq i\leq l$ and $j\in \mathbb{Z}$.
\end{proof}

\begin{proposition}\label{ls}
For $l\geq 0$, we have
$\{l\}_q! \tr_q^{\tilde P'_{l}}(\uqzq^{\ev})\in \Z $.
\end{proposition}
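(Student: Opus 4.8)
The plan is to reduce the claim about $\uqzqe$ to the analogous computation for $\uqe$, which has already been carried out in Corollary \ref{ls2}. By Lemma \ref{F1}, $\uqzqe$ is $\mathbb{Z}[q,q^{-1}]$-spanned by the elements $\f{i}K^{2j}\begin{bmatrix} H\\s\end{bmatrix}_q \e{k}$ for $i,k,s\geq 0$ and $j\in\mathbb{Z}$. Since $\tr_q^{\tilde P'_l}$ is $\mathbb{Z}[q,q^{-1}]$-linear, it suffices to show that $\{l\}_q!\,\tr_q^{\tilde P'_l}\big(\f{i}K^{2j}\begin{bmatrix} H\\s\end{bmatrix}_q \e{k}\big)\in\Z$ for every such generator. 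The displayed formula for $\begin{bmatrix} H\\s\end{bmatrix}_q$ in the paper rewrites this element as a $\frac{1}{\{s\}_q!}$ times a $\mathbb{Z}[q,q^{-1}]$-linear combination of terms $\f{i}K^{2j}K^{2p}\e{k}=\f{i}K^{2(j+p)}\e{k}$, each of which lies in $\uqe$.

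First I would dispose of the off-diagonal generators. By Lemma \ref{ls0}(1), $\tr_q^{P_l}$ (and hence $\tr_q^{\tilde P'_l}$) kills $\f{i}K^{2m}\e{k}$ whenever $i\neq k$, so only the diagonal case $i=k$ contributes, and by Lemma \ref{ls0}(2) within that case only $i\leq l$ survives. Thus the only surviving generators are $\f{i}K^{2j}\begin{bmatrix} H\\s\end{bmatrix}_q\e{i}$ with $0\leq i\leq l$, and for these the expansion of $\begin{bmatrix} H\\s\end{bmatrix}_q$ produces precisely the quantity
\begin{align*}
\tr_q^{\tilde P'_l}\big(\f{i}K^{2j}\begin{bmatrix} H\\s\end{bmatrix}_q\e{i}\big)=\frac{1}{\{s\}_q!}\sum_{p=0}^{s}(-1)^{s-p}q^{\frac{1}{2}p(p+1)-sp}\begin{bmatrix} s \\p\end{bmatrix}_q\tr_q^{\tilde P'_l}\big(\f{i}K^{2(j+p)}\e{i}\big).
\end{align*}
From Corollary \ref{ls2} we already know each $\tr_q^{\tilde P'_l}(\f{i}K^{2(j+p)}\e{i})\in I_l$; more precisely, inspecting its proof, this trace equals $q^{(\ast)}\{l-i\}_q!\begin{bmatrix}j+p+l-1\\l-i\end{bmatrix}_q\begin{bmatrix}j+p-1\\l-i\end{bmatrix}_q$ up to a unit, and multiplying by $\{l\}_q!$ clears the remaining denominators since $\{l\}_q!=\{l\}_{q,l}=\{l\}_q\cdots\{1\}_q$ absorbs the factor $\{s\}_q!$ (one has $s\leq l$ because $\begin{bmatrix}H\\s\end{bmatrix}_q$ acts as zero on $V_m$ for $m\leq s$, so only $s\leq l$ survives against $\tilde P'_l$, a point I would verify via the weight structure).

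The main obstacle, and the step requiring genuine care, is controlling the denominator $\{s\}_q!$ introduced by $\begin{bmatrix} H\\s\end{bmatrix}_q$. Unlike Corollary \ref{ls2}, where multiplication by $\{i\}_q!$ alone clears the single denominator $\{l\}_q!$ coming from $\tilde P'_l$, here I must show that the combined factor $\{l\}_q!$ simultaneously absorbs both the $\frac{1}{\{l\}_q!}$ from $\tilde P'_l$ and the $\frac{1}{\{s\}_q!}$ from $\begin{bmatrix}H\\s\end{bmatrix}_q$. I expect the cleanest route is to argue that $s$ is effectively bounded by $l$ (so $\{s\}_q!\mid\{l\}_q!$) and that the $q$-binomial coefficients appearing in Lemma \ref{ls0}(3) already belong to $\mathbb{Z}[q,q^{-1}]$, so that after the cancellation everything is integral; assembling these observations, together with $\mathbb{Z}[q,q^{-1}]$-linearity of the trace, yields $\{l\}_q!\,\tr_q^{\tilde P'_l}(\uqzq^{\ev})\subset\Z$, which is the assertion.
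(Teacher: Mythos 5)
Your reduction is exactly the paper's: Lemma \ref{F1} plus Lemma \ref{ls0}(1),(2) cut the problem down to the generators $\f{i}K^{2j}\begin{bmatrix} H\\s\end{bmatrix}_q\e{i}$ with $0\leq i\leq l$, and expanding $\begin{bmatrix} H\\s\end{bmatrix}_q$ into the terms $K^{2p}$ reduces everything to the $s=0$ traces, which is also how the paper proceeds (its equation (\ref{ari})). You also correctly identify the denominator $\{s\}_q!$ as the crux. But both halves of your proposed resolution of that crux fail. First, the claim that $\begin{bmatrix}H\\s\end{bmatrix}_q$ acts as zero on $V_m$ for $m\leq s$ is false: on a weight space where $K^2$ acts by $q^w$, the numerator $(K^2-1)(q^{-1}K^2-1)\cdots(q^{-s+1}K^2-1)$ vanishes only when $0\leq w\leq s-1$, so it never vanishes on negative-weight spaces. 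Consequently there is no bound $s\leq l$; the paper's closed formula shows the answer is a sum of terms carrying $\begin{bmatrix}i+t+u\\s\end{bmatrix}_q$ with $0\leq t,u\leq l-i$, so contributions survive up to $s\leq 2l-i$. Concretely, for $l=1$, $i=0$, $s=2$ one gets $\{1\}_q!\,\tr_q^{\tilde P'_1}\bigl(K^{2j}\begin{bmatrix}H\\2\end{bmatrix}_q\bigr)=q^{j}\neq 0$. Second, even in the range $s\leq l$, ``$\{l\}_q!$ absorbs $\{s\}_q!$'' is not an argument: the factor $\{l\}_q!$ is already spent making the $s=0$ traces integral (it clears the $1/\{i\}_q!$ hidden in the divided power $\e{i}$), and the individual summands $\{l\}_q!\,\tr_q^{\tilde P'_l}(\f{i}K^{2(j+p)}\e{i})$ are integral but not, term by term, divisible by $\{s\}_q!$. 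The case $i=l$ makes this stark: there each summand is $\pm q^{\ast}\begin{bmatrix}s\\p\end{bmatrix}_q$, a unit times a binomial, yet the full expression must be divisible by $\{s\}_q!$.

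The integrality is a property of the \emph{alternating sum} over $p$, not of its terms, and supplying that finite-difference mechanism is the actual content of the paper's proof, which is missing from yours. The paper expands $\{(j+p)+l-1\}_{q,l-i}$ and $\{(j+p)-1\}_{q,l-i}$ via the identity
\begin{align*}
\{k\}_{q,n}=\sum_{r=0}^n(-1)^{n-r}q^{\frac{1}{2}r(r+1)+r(k-n)}\begin{bmatrix}n \\r \end{bmatrix}_q ,
\end{align*}
interchanges the order of summation so that all $p$-dependence sits in a single power $q^{(i+t+u)p}$ up to the fixed prefactor, and then the $p$-sum collapses, by the same identity read backwards, to
\begin{align*}
\frac{1}{\{s\}_q!}\sum_{p=0}^{s}(-1)^{s-p}q^{\frac{1}{2}p(p+1)+(i+t+u-s)p}\begin{bmatrix} s \\p\end{bmatrix}_q=\begin{bmatrix} i+t+u \\s\end{bmatrix}_q\in\Z .
\end{align*}
This is the step where $\{s\}_q!$ genuinely disappears, and it is also what produces the correct vanishing range $s>2l-i$ (the binomial is then zero), in place of your incorrect $s>l$. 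Without this identity, or some equivalent $q$-divisibility statement for alternating sums, your proof does not go through; with it, the rest of your text coincides with the paper's argument.
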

\begin{proof}
By Lemma \ref{F1} and Lemma \ref{ls0} (1), (2), it is enough to check
\begin{align*}
\{l\}_q! \tr_q^{\tilde P'_{l}}(\f{i}K^{2j}\begin{bmatrix} H\\s\end{bmatrix}_q\e{i})\in \Z
\end{align*}
for $0\leq i\leq l$, $s\geq 0$ and $j\in \mathbb{Z}$.

For $s=0$, by Lemma \ref{ls2} (3), we have
\begin{align}
\begin{split}\label{ari}
\{l\}_q! \tr_q^{\tilde P'_l}(\f{i}K^{2j}\e{i})=&\frac{\{l\}_q! }{\{i\}_q!}\tr_q^{\tilde P'_l}(\f{i}K^{2j}e^{i})
\\
=&\frac{q^{\frac{1}{2}l}}{\{i\}_q!}\tr_q^{ P_l}(\f{i}K^{2j}e^{i})
\\
=&q^{l-lj+2ij+i^2-il}\{l\}_{q,l-i}\{l-i\}_q! \begin{bmatrix}
j+l-1 \\
l-i
\end{bmatrix}_q\begin{bmatrix}
j-1 \\
l-i
\end{bmatrix}_q.
\end{split}
\end{align}
For $s\geq 0$, by using the case $s=0$, we have
\begin{align*}
\{l\}_q!& \tr_q^{\tilde P'_l}(\f{i}K^{2j}\begin{bmatrix} H \\s\end{bmatrix}_q\e{i})
\\
=&\{l\}_q! \frac{1}{\{s\}_q!}\sum_{p=0}^{s}(-1)^{s-p}q^{\frac{1}{2}p(p+1)-sp}\begin{bmatrix} s \\p\end{bmatrix}_q\tr_q^{\tilde P'_l}(\f{i}K^{2j+2p}\e{i})
\\
=& \frac{1}{\{s\}_q!}\sum_{p=0}^{s}(-1)^{s-p}q^{\frac{1}{2}p(p+1)-sp}\begin{bmatrix} s \\p\end{bmatrix}_qq^{l-lj+2i(j+p)+i^2-il}\{l\}_{q,l-i}\{l-i\}_q! \begin{bmatrix}
(j+p)+l-1 \\
l-i
\end{bmatrix}_q\begin{bmatrix}
(j+p)-1 \\
l-i
\end{bmatrix}_q
\\
=& \frac{1}{\{s\}_q!}\sum_{p=0}^{s}(-1)^{s-p}q^{\frac{1}{2}p(p+1)-sp}\begin{bmatrix} s \\p\end{bmatrix}_qq^{l-lj+2i(j+p)+i^2-il}\begin{bmatrix}
l \\
l-i
\end{bmatrix}_q \{(j+p)+l-1\}_{q,l-i}
\{(j+p)-1\}_{q,l-i}
\\
=& \frac{1}{\{s\}_q!}\sum_{p=0}^{s}(-1)^{s-p}q^{\frac{1}{2}p(p+1)-sp}\begin{bmatrix} s \\p\end{bmatrix}_qq^{l-lj+2i(j+p)+i^2-il}\begin{bmatrix}
l \\
l-i
\end{bmatrix}_q 
\\
&\Big(\sum_{t=0}^{l-i}(-1)^{l-i-t}q^{\frac{1}{2}t(t+1)+(j+p+i-1)t}\begin{bmatrix} l-i \\t\end{bmatrix}_q\Big)
\Big(\sum_{u=0}^{l-i}(-1)^{l-i-u}q^{\frac{1}{2}u(u+1)+(j+p-l+i-1)u}\begin{bmatrix} l-i \\u\end{bmatrix}_q\Big)
\\
=& \sum_{t=0}^{l-i}\sum_{u=0}^{l-i}(-1)^{t+u}q^{l-lj+2ij+i^2-il+\frac{1}{2}t(t+1)+(j+i-1)t+\frac{1}{2}u(u+1)+(j-l+i-1)u}\\
&\Big(\frac{1}{\{s\}_q!}\sum_{p=0}^{s}(-1)^{s-p}q^{\frac{1}{2}p(p+1)+(i+t+u-s)p}\begin{bmatrix} s \\p\end{bmatrix}_q\Big)
\begin{bmatrix}
l \\
l-i
\end{bmatrix}_q 
\begin{bmatrix} l-i \\t\end{bmatrix}_q
\begin{bmatrix} l-i \\u\end{bmatrix}_q
\\
=& \sum_{t=0}^{l-i}\sum_{u=0}^{l-i}(-1)^{t+u}q^{l-lj+2ij+i^2-il+\frac{1}{2}t(t+1)+(j+i-1)t+\frac{1}{2}u(u+1)+(j-l+i-1)u}\\
&\begin{bmatrix} i+t+u \\s\end{bmatrix}_q\begin{bmatrix}
l \\
l-i
\end{bmatrix}_q 
\begin{bmatrix} l-i \\t\end{bmatrix}_q
\begin{bmatrix} l-i \\u\end{bmatrix}_q\quad \in \Z.
\end{align*}
Here, the first equality follows from the definition of $\begin{bmatrix} H \\s\end{bmatrix}_q$, 
the second equality follows from (\ref{ari}), and the other equalities follow from straightforward calculations, where we use
\begin{align*}
\{k\}_{q,n}=\sum_{r=0}^n(-1)^{n-r}q^{\frac{1}{2}r(r+1)+r(k-n)}
\begin{bmatrix}n \\r \end{bmatrix}_q
\end{align*}
for $k\in \mathbb{Z}$ and $n\geq0$.
Hence we have the assertion.
\end{proof}
We use the following proposition.
 \begin{proposition}\label{iin}
 Let $T$ be an  $n$-component Brunnian bottom tangle with $n\geq 3$. For $1\leq i\leq n$ and $l_i\geq 0,$ we have 
 \begin{itemize}
 \item[\rm{(i)}] $(\id^{\otimes i-1} \otimes \tr_q^{\tilde P'_{l_i}}\otimes\id^{\otimes n-i})(J_T)\in (\muqe) ^{\otimes n-1},$
   
 \item[\rm{(ii)}] $\{l_i\}_q!(\id^{\otimes i-1} \otimes \tr_q^{\tilde P'_{l_i}}\otimes\id^{\otimes n-i})(J_T)\in (\uqe) ^{\otimes n-1}.$
 \end{itemize}
 
 \end{proposition}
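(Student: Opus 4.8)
The plan is to prove both parts of Proposition \ref{iin} by combining the structural result of Proposition \ref{p1} with the trace computations assembled in Corollary \ref{ls2} and Proposition \ref{ls}. The key observation is that Proposition \ref{p1} furnishes, for each index $i$, a diagram $\tilde T^{(i)}$ for which every state contribution $J_{\tilde T^{(i)},s}$ lies in $(\uqe)^{\otimes i-1} \otimes \uqzqe \otimes (\uqe)^{\otimes n-i}$. The point of singling out the $i$th tensorand is precisely that the partial trace $\tr_q^{\tilde P'_{l_i}}$ is to be applied there, and the quantum trace of an element of $\uqzqe$ is controlled by Proposition \ref{ls}, while the remaining $n-1$ tensorands already sit in $\uqe$ and are untouched by the partial trace. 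So the strategy is: fix $i$, use the diagram $\tilde T^{(i)}$, take the state sum, and apply $\tr_q^{\tilde P'_{l_i}}$ to the $i$th slot tensorand by tensorand.

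First I would establish part (ii), since it is the cleaner statement. For the diagram $\tilde T^{(i)}$, each $J_{\tilde T^{(i)},s}$ can be written as a finite sum of simple tensors $x_1 \otimes \cdots \otimes x_n$ with $x_i \in \uqzqe$ and $x_k \in \uqe$ for $k \neq i$; applying $\id^{\otimes i-1}\otimes \tr_q^{\tilde P'_{l_i}}\otimes \id^{\otimes n-i}$ factors as $x_1 \otimes \cdots \otimes \tr_q^{\tilde P'_{l_i}}(x_i) \otimes \cdots \otimes x_n$ on each simple tensor. By Proposition \ref{ls}, $\{l_i\}_q!\,\tr_q^{\tilde P'_{l_i}}(x_i) \in \Z$, so after multiplying by $\{l_i\}_q!$ the $i$th tensorand becomes a scalar in $\Z$, and absorbing it into a neighbouring $\uqe$ tensorand (using that $\uqe$ is a $\Z$-algebra) yields an element of $(\uqe)^{\otimes n-1}$. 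The delicate point is that $J_T$ is an infinite state sum living in a completed tensor power, so I must pass this inclusion through the completion: here Proposition \ref{p}, which gives $J_{\tilde T^{(i)},s} \in \tilde D^{\mathrm{Lk}(T)}\mathcal{F}_{|s|}((\muqe)^{\otimes n})$ with $\mathrm{Lk}(T)=0$, ensures the partial sums converge $\mathcal{F}_p$-adically and that the bound $\tr_q^{\tilde P'_{l_i}}$ is continuous with respect to the filtration, so the membership survives the limit.

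For part (i), the same factorization applies but now I only know $x_i \in \uqzqe$ without the $\{l_i\}_q!$ denominator cleared. The relevant input is that $\tr_q^{\tilde P'_{l_i}}(\uqzqe) \subset \{l_i\}_q!^{-1}\Z$ by Proposition \ref{ls}, so $\tr_q^{\tilde P'_{l_i}}(x_i)$ is a scalar in $\mathbb{Q}(q^{1/2})$; multiplying it into an adjacent $\uqe$ tensorand gives an element of $\uqe \otimes_{\Z} \mathbb{Q}(q^{1/2})$. But I claim the genuinely integral statement $\in (\muqe)^{\otimes n-1}$ holds, and the mechanism is that by Theorem \ref{h0} the full invariant $J_T \in \tmuqen{n}$, whence each partial trace of $J_T$ already lands in the completed tensor power over $\muqe$; the scalar appearing in the $i$th slot, though a priori in $\{l_i\}_q!^{-1}\Z$, combines with the module structure of $\muqe$ to give an element of $(\muqe)^{\otimes n-1}$. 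Thus part (i) follows from Proposition \ref{p1} together with the integrality built into $\tmuqen{n}$ from Theorem \ref{h0}.

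The main obstacle I anticipate is the interchange of the infinite state sum with the partial quantum trace across the completion. Because $J_T$ is defined as a state sum that only converges in the $\mathcal{F}_p$-adic topology of the completed tensor power, and $\tr_q^{\tilde P'_{l_i}}$ is a map out of that completion into a smaller completed tensor power, I must verify that $\tr_q^{\tilde P'_{l_i}}$ is continuous for the filtrations involved and that the image of the filtration $\mathcal{F}_p$ is suitably nested; this is what legitimizes applying Propositions \ref{ls} and Corollary \ref{ls2}, which are stated for individual elements, to the limiting object $J_T$. Once that continuity is in hand, both parts reduce to the termwise trace estimates already established, so I expect the topological bookkeeping — not the algebra — to be the crux of the argument.
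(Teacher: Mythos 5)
There is a genuine gap, and it sits exactly where you predicted: the passage from the termwise estimates to the full state sum. Both parts of Proposition \ref{iin} assert membership in the \emph{uncompleted} tensor powers $(\muqe)^{\otimes n-1}$ and $(\uqe)^{\otimes n-1}$, which are plain $\Z$-modules. A continuity argument for $\id^{\otimes i-1}\otimes\tr_q^{\tilde P'_{l_i}}\otimes\id^{\otimes n-i}$ with respect to the filtrations $\mathcal{F}_p$ can only yield membership of the limit in the corresponding \emph{completions}, such as $\uqen{n-1}$ or $(\tmuqe)^{\tilde\otimes (n-1)}$: an $\mathcal{F}_p$-adically convergent infinite sum of elements of $(\uqe)^{\otimes n-1}$ has no reason to lie in $(\uqe)^{\otimes n-1}$ itself. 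So ``the membership survives the limit'' is precisely the step that fails; what your argument proves is a strictly weaker, completed statement, and the distinction between a subalgebra and its closure is the central theme of this paper (moreover the later applications, e.g.\ the zero-divisor argument in the proof of Theorem \ref{4}, genuinely need the uncompleted statement).

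The missing idea, which is how the paper argues, is a vanishing result that makes the sum \emph{finite}, so that no limit is taken at all. For the diagram $\tilde T^{(i)}$ produced by the proof of Proposition \ref{p1}, \emph{every} crossing involves the strand $\tilde T_i$: crossings are either between $\tilde T_i$ and the other strands or self-crossings of $\tilde T_i$. Hence for any state $s$ there is a crossing $c$ on $\tilde T_i$ with $s(c)=|s|$, so the $i$th tensorand of $J_{\tilde T^{(i)},s}$ contains a factor $E^{|s|}$ or $F^{|s|}$ (up to units and powers of $K$). Since $E^p$ and $F^p$ act as $0$ on $V_m$ for $0\leq m<p$, and $\tilde P'_{l_i}\in\Span_{\mathbb{Q}(q^{1/2})}\{V_0,\ldots,V_{l_i}\}$, the partial trace of $J_{\tilde T^{(i)},s}$ vanishes whenever $|s|>l_i$. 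Thus the partial trace of $J_T$ equals the finite sum over states with $|s|\leq l_i$, and your termwise arguments then suffice: (ii) follows from Proposition \ref{p1} together with Proposition \ref{ls}, and (i) follows from Theorem \ref{WW} (with $\mathrm{Lk}(T)=0$) together with $\tr_q^{\tilde P'_{l_i}}(\muqe)\subset\Z$. Note that your mechanism for (i) is also unsound as stated: a scalar in $\{l_i\}_q!^{-1}\Z$ multiplied into $(\muqe)^{\otimes n-1}$ does not land in $(\muqe)^{\otimes n-1}$, and Theorem \ref{h0} only supplies completed information; the correct termwise input for (i) is Theorem \ref{WW}, not Theorem \ref{h0}. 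Finally, observe that the vanishing step really uses the special diagram: for an arbitrary diagram of $T$ a state can take its maximal value at a crossing not involving strand $i$, and then the partial trace need not kill that state's contribution.
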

  \begin{proof} We prove the assertion with $i=1$. The other cases are similar.
 Let $\tilde T=\tilde T^{(1)}$ be a diagram of $T$ as in Proposition \ref{p1}.
 By the proof of Proposition \ref{p1}, we can assume that $\tilde T=\tilde T_1\cup \cdots \cup \tilde T_1$ has only  two types of crossings as follows.
\begin{itemize}
\item[] Type A: Crossings between $\tilde T_1$ and $\tilde T_2\cup \cdots \cup \tilde T_n$ 
\item[] Type B: Self crossings of $\tilde T_1$ 
\end{itemize}

Let $s\in \mathcal{S}(\tilde T)$.  Set $|s|=\max \{s(c) \ | \ c\in C(\tilde T)\}.$
Note that, for $0 \leq m<p$, the elements $E^p$ and $F^p$ act as $0$ on the  $m$-dimensional irreducible representation $V_m$ of $U_h$. 
Since each crossing of either type involves the strand $\tilde{T}_1$, there is a crossing $c$ on $\tilde T_1$ such that  $s(c)=|s|$.
Since $\tilde P'_{l_1}\in \Span _{\mathbb{Q}(q^{1/2})}\{V_0,\ldots, V_{l_1}\}$,
if  $|s|> l_1$,  we have 
\begin{align}
 (\tr_q^{\tilde P'_{l_1}}\otimes\id^{\otimes n-1})(J_{\tilde T,s})=0. \label{itt}
  \end{align}
By (\ref{itt}), Theorem \ref{WW} with $\mathrm{Lk}(T)=0$ implies (i), and  Propositions \ref{p1}, \ref{ls} imply (ii).
  \end{proof}
  
 For a subalgebra $X$ of $U_h$, let $Z(X)$ denote the center of $X$.
Habiro \cite[Proposition 8.6]{H2} proved that for an $n$-component algebraically-split bottom tangle with $0$-framing, we have
\begin{align*}
 (\id \otimes \tr_q^{\tilde P'_{l_2}}\otimes \tr_q^{\tilde P'_{l_3}}\otimes \cdots \otimes \tr_q^{\tilde P'_{l_n}})(J_T)
 \in Z(\tmuqe).
 \end{align*}
 We can improve this result for Brunnian bottom tangles as follows.

 \begin{proposition}\label{hc}
 Let $T$ be an  $n$-component Brunnian bottom tangle with $n\geq 3$. For $l_2,\ldots, l_n\geq 0,$ we have 
 \begin{align*}
 (\id \otimes \tr_q^{\tilde P'_{l_2}}\otimes \tr_q^{\tilde P'_{l_3}}\otimes \cdots \otimes \tr_q^{\tilde P'_{l_n}})(J_T)
 \in Z(\muqe).
  \end{align*}
  
 \end{proposition}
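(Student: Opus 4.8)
The plan is to combine two facts already assembled in the excerpt: the membership result for the universal invariant proved via Proposition \ref{p1}, and Habiro's centrality statement quoted just above. Habiro \cite[Proposition 8.6]{H2} gives that $(\id \otimes \tr_q^{\tilde P'_{l_2}}\otimes \cdots \otimes \tr_q^{\tilde P'_{l_n}})(J_T)$ lies in $Z(\tmuqe)$ for any algebraically-split $0$-framed bottom tangle, and a Brunnian bottom tangle with $n\geq 3$ is such a tangle. So the element is automatically central in $\tmuqe$; the only new content is to show it actually lies in the smaller algebra $\muqe$ (equivalently, in $Z(\muqe)=Z(\tmuqe)\cap \muqe$, once one knows the center of the subalgebra is the intersection of the center of the ambient algebra with the subalgebra). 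Thus the whole proposition reduces to an improved membership statement, and centrality comes for free from Habiro.

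First I would reduce the tensor degree using Proposition \ref{iin}. Proposition \ref{iin}(i) says that applying $\tr_q^{\tilde P'_{l_i}}$ in a single tensor slot of $J_T$ lands in $(\muqe)^{\otimes n-1}$. The natural approach is to apply the partial quantum traces in slots $2,\ldots,n$ one at a time and track membership. The subtlety is that Proposition \ref{iin}(i) is stated for a single trace applied to $J_T$ itself, so I would need to check that applying further traces to an element of a tensor power of $\muqe$ preserves membership in the correspondingly smaller tensor power of $\muqe$. Concretely, I would argue that for $u\in (\muqe)^{\otimes m}$ the map $\id^{\otimes k-1}\otimes \tr_q^{\tilde P'_l}\otimes \id^{\otimes m-k}$ sends $u$ into $(\muqe)^{\otimes m-1}$; this follows from the generator description of $\muqe$ together with Lemma \ref{ls0} and Corollary \ref{ls2}, exactly as in the proof of Proposition \ref{iin}, since the trace of a $\uqe$-element lands in $I_l\subset \Z$ and scalars are harmless. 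Iterating over slots $2,\ldots,n$ then places the result in $\muqe$ (the single remaining slot).

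The main obstacle I expect is precisely this iteration step, because Proposition \ref{iin}(i) is phrased for $J_T$ and not for an arbitrary element of a tensor power, and the completions $\tmuqe$ intervene: one must make sure the partial traces are well-defined and continuous on the completed tensor powers and that they carry $(\muqe)^{\otimes m}$ into $(\muqe)^{\otimes m-1}$ after completion. I would handle this by working at the level of $J_{\tilde T,s}$ for a fixed diagram $\tilde T$ as in Proposition \ref{p1}, applying the traces state-by-state, and using the vanishing $(\tr_q^{\tilde P'_{l_i}}\otimes \id^{\otimes n-1})(J_{\tilde T,s})=0$ whenever $|s|>l_i$ (equation (\ref{itt})) to control the sum and reduce to finitely many states, so that no completion issue obstructs the membership. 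Once membership in $\muqe$ is established this way, centrality in $\tmuqe$ from Habiro gives centrality in the intersection, i.e. membership in $Z(\muqe)$, completing the proof.
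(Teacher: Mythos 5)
Your proof is correct, and its first half is the paper's own argument. The paper also applies Proposition \ref{iin}(i) exactly once, to land in the \emph{uncompleted} tensor power $(\muqe)^{\otimes n-1}$, and then evaluates the remaining slots using the fact that $\tr_q^{\tilde P'_{l}}(\muqe)\subset \Z$ for $l\geq 0$. Two remarks on your version of this step. First, your worry about completions is moot: precisely because Proposition \ref{iin}(i) gives membership in the genuine tensor power $(\muqe)^{\otimes n-1}$, the subsequent partial traces act on finite sums of elementary tensors, so no continuity or state-by-state argument is needed; your fallback via equation (\ref{itt}) is harmless but superfluous. Second, your aside that ``the trace of a $\uqe$-element lands in $I_l\subset\Z$'' (Corollary \ref{ls2}) is not quite the statement you need: elements of $(\muqe)^{\otimes n-1}$ involve the divided powers $\f{k}$, which lie in $\muqe$ but not in $\uqe$. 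The required fact is $\tr_q^{\tilde P'_{l}}(\muqe)\subset \Z$, which does follow---as you also indicate---from the spanning set $\f{i}K^{2j}e^{k}$ of $\muqe$ together with Lemma \ref{ls0}, and this is exactly what the paper invokes.

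Where you genuinely diverge is the centrality step. You quote Habiro's result that the partially traced invariant of an algebraically-split $0$-framed bottom tangle lies in $Z(\tmuqe)$, and then intersect with $\muqe$; note that only the trivial inclusion $\muqe\cap Z(\tmuqe)\subset Z(\muqe)$ is needed here, not the equality $Z(\muqe)=Z(\tmuqe)\cap\muqe$ you allude to, so there is no gap. The paper instead uses the ad-invariance of $J_T$ (it lies in the invariant part of $U_h^{\hat\otimes n}$, \cite[Proposition 4.2]{H2}), which forces the partially traced element into $Z(U_h)$, and then uses $\muqe\cap Z(U_h)\subset Z(\muqe)$. Both routes are a one-line citation plus a trivial intersection; the paper's is marginally cleaner in that ad-invariance holds for arbitrary bottom tangles (no algebraically-split hypothesis to check) and never mentions the completion $\tmuqe$, whereas your route leans on Habiro's Proposition 8.6, which is itself a consequence of that same invariance. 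In both treatments the actual improvement over Habiro---replacing $Z(\tmuqe)$ by $Z(\muqe)$---comes entirely from the membership step, as you correctly identified at the outset.
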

 \begin{proof}
By Proposition  \ref{iin} (i)  and $\tr_q^{\tilde P'_{l}}(\muqe)\subset \Z$ for $l\geq 0$, we have
 \begin{align*}
 (\id \otimes \tr_q^{\tilde P'_{l_2}}\otimes \tr_q^{\tilde P'_{l_3}}\otimes \cdots \otimes \tr_q^{\tilde P'_{l_n}})(J_T)
 &\in (\id \otimes \tr_q^{\tilde P'_{l_3}}\otimes \cdots \otimes \tr_q^{\tilde P'_{l_n}})((\muqe)^{\otimes n-1})
 \\
 &\subset \muqe.
\end{align*}
 It is well-known that $J_T$ is contained in the invariant part of $U_h^{\hat \otimes n}$ (cf. \cite[Proposition 4.2]{H2}). This fact implies
\begin{align*}
(\id \otimes \tr_q^{\tilde P'_{l_2}}\otimes \tr_q^{\tilde P'_{l_3}}\otimes \cdots \otimes \tr_q^{\tilde P'_{l_n}})(J_T)
 &\in  Z(U_h).
\end{align*}
Since $\muqe \cap Z(U_h)\subset Z(\muqe)$, we have the assertion.
 \end{proof}
 
Let $C=(q^{1/2}-q^{-1/2})^2FE+q^{1/2}K+q^{-1/2}K^{-1}$ denote the Casimir element.
 Recall from \cite{H2} that $Z(\muq^{\ev})$ is freely generated by $C^2$ as a $\Z$-algebra, and thus, freely spanned by the 
 following monic polynomials in $C^2$  as a $\Z$-module.
\begin{align*}
\sigma_p= \prod _{i=1}^p\big(C^2-(q^i+2+q^{-i})\big), \quad p\geq0.
\end{align*}

Habiro proved the following.
 
\begin{proposition}[Habiro {\cite[Proposition 6.3]{H2}}]\label{h3}
For $l,m \geq 0,$ we have
\begin{align*}
\tr_q^{P''_l}(\sigma_m)=\delta_{l,m},
\end{align*}
where $P_l''=q^{l(l+1)}\frac{\{1\}_q}{\{2l+1\}_{q,l+1}}\tilde P'_l$.
\end{proposition}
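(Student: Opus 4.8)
The plan is to verify the identity $\tr_q^{P''_l}(\sigma_m) = \delta_{l,m}$ by reducing it to a computation of quantum traces of powers of the Casimir against the Habiro basis elements $\tilde P'_l$ of the representation ring. The key structural fact to exploit is that $\sigma_m$ is a monic degree-$m$ polynomial in $C^2$ whose roots are exactly the eigenvalues $q^i + 2 + q^{-i}$ for $i = 1, \ldots, m$. Since the Casimir $C$ acts on the irreducible representation $V_j$ as the scalar $q^{j/2} + q^{-j/2}$ (up to the normalization conventions fixed in the paper), $C^2$ acts on $V_j$ as $q^j + 2 + q^{-j}$, which is precisely the value appearing as a root of $\sigma_m$ when $j = i$. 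The first step is therefore to record the eigenvalue of $\sigma_m$ on each $V_j$: it vanishes on $V_1, \ldots, V_m$ and is a concrete nonzero product on $V_j$ for $j > m$.

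Next I would pass to the colored Jones / quantum trace setup. Writing $\tr_q^{V_j}(\sigma_m)$ for the trace of $\sigma_m$ colored by $V_j$, the central element $\sigma_m$ acts as a scalar $\lambda_{m,j}$ on $V_j$, so $\tr_q^{V_j}(\sigma_m) = \lambda_{m,j} \cdot \dim_q(V_j)$, where $\dim_q(V_j) = \tr_q^{V_j}(1)$ is the quantum dimension. The essential point is that $P''_l$, expanded in the basis $\{V_j\}$, is designed so that pairing against it extracts a single coefficient. Concretely, I would use Habiro's change of basis from $\{V_j\}$ to $\{\tilde P'_j\}$ (equivalently, the twisted basis of $\mathcal R$ used throughout Section \ref{sect41}) together with the normalization $P''_l = q^{l(l+1)}\frac{\{1\}_q}{\{2l+1\}_{q,l+1}}\tilde P'_l$, and reduce $\tr_q^{P''_l}(\sigma_m)$ to a sum $\sum_j c_j \lambda_{m,j} \dim_q(V_j)$ over the finitely many $j$ that occur.

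The main obstacle, and the heart of the argument, is the orthogonality: one must show this weighted sum collapses to $\delta_{l,m}$. The cleanest route is to observe that $\{\sigma_m\}_{m\geq 0}$ and $\{\tilde P'_l\}_{l \geq 0}$ (hence $\{P''_l\}$) are dual bases with respect to the quantum trace pairing. Since $\sigma_m$ kills $V_1, \ldots, V_m$, the pairing $\tr_q^{P''_l}(\sigma_m)$ automatically vanishes whenever $l < m$ (because $P''_l$ lies in the span of $V_1, \ldots, V_{l+1}$, and only the top term survives against a polynomial of higher degree in $C^2$); a symmetric degree count handles $l > m$. For the diagonal case $l = m$, I would compute the single surviving term explicitly: the value $\lambda_{m, m+1}$ of $\sigma_m$ on $V_{m+1}$, namely $\prod_{i=1}^m\big((q^{m+1}+2+q^{-(m+1)}) - (q^i+2+q^{-i})\big)$, multiplied by the quantum dimension $\dim_q(V_{m+1})$ and the leading coefficient of $P''_l$ in the $V_{m+1}$ direction. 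The normalizing factor $q^{l(l+1)}\frac{\{1\}_q}{\{2l+1\}_{q,l+1}}$ is precisely engineered to cancel this product and the quantum dimension, yielding $1$. The delicate part is bookkeeping the $q$-power prefactors and the $q$-integer products $\{2l+1\}_{q,l+1}$ so that everything telescopes; this is the step where a sign or a shift in the exponent is easiest to lose, so I would cross-check the $l = m = 0$ and $l = m = 1$ cases by hand against the explicit traces in Lemma \ref{ls0} before trusting the general telescoping.
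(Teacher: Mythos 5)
First, a point of comparison: the paper itself gives no proof of this proposition; it is quoted from Habiro \cite[Proposition 6.3]{H2}. So your proposal is being judged on its own merits. Its setup is correct: $C^2$ acts on $V_j$ by the scalar $q^j+2+q^{-j}$, so $\sigma_m$ kills $V_1,\dots,V_m$ and acts on $V_j$, $j>m$, by the nonzero scalar $\lambda_{m,j}=\prod_{i=1}^m(q^j+q^{-j}-q^i-q^{-i})$; and since $P_l$ is a monic degree-$l$ polynomial in $V_2$ and $V_2\otimes V_j\cong V_{j+1}\oplus V_{j-1}$, one has $P''_l\in\Span_{\mathbb{Q}(q^{1/2})}\{V_1,\dots,V_{l+1}\}$ with coefficient $q^{l(l+1)+\frac{1}{2}l}\{1\}_q/\big(\{2l+1\}_{q,l+1}\{l\}_q!\big)$ on $V_{l+1}$. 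This settles the case $l<m$ exactly as you say, and your plan for the diagonal $l=m$ is also sound: the single surviving term is $c_{l+1}\,\lambda_{l,l+1}\dim_q V_{l+1}$, and with $\lambda_{l,l+1}=q^{-l(l+1)}\{2l+1\}_{q,l}\{l\}_q!$ and $\dim_q V_{l+1}=\tr^{V_{l+1}}(K^{-1})=q^{-\frac{1}{2}l}\{l+1\}_q/\{1\}_q$ this product is exactly $1$, confirming the stated normalization of $P''_l$.

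The genuine gap is the case $l>m$, which you dismiss with ``a symmetric degree count handles $l>m$.'' There is no such count: for $l>m$ the pairing equals $\sum_{j=m+1}^{l+1}c^{(l)}_j\lambda_{m,j}\dim_q V_j$, a sum of $l+1-m$ terms each of which is individually nonzero, and its vanishing is precisely the nontrivial orthogonality to be proved (your alternative phrasing, that $\{\sigma_m\}$ and $\{P''_l\}$ ``are dual bases with respect to the quantum trace pairing,'' is the statement itself, hence circular). Equivalently, one must show that the functional $p\mapsto\tr_q^{P_l}\big(p(C^2)\big)$ annihilates every polynomial $p$ of degree less than $l$, which is a condition on the weights $c^{(l)}_j\dim_q V_j$ that does not follow from knowing the span of $P''_l$. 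The missing ingredient is the second family of roots, dual to the one you used: the case $l<m$ exploits that $\sigma_m$ vanishes at the ``integer'' points $q^j+2+q^{-j}$ with $j\le m$, while the case $l>m$ must exploit that $P_l$ vanishes at the ``half-integer'' points $q^{i+\frac{1}{2}}+q^{-i-\frac{1}{2}}$ with $0\le i\le l-1$. Concretely, $\tr_q^{V_j}(\sigma_m)$ is, up to a factor independent of $j$, the product of the $2m+1$ quantities $q^{k/2}-q^{-k/2}$ for $j-m\le k\le j+m$; expanding this product in powers of $q^{j/2}$ shows that $Y\mapsto\tr_q^{Y}(\sigma_m)$ is a linear combination, over odd $s$ with $s\le 2m+1$, of the ring homomorphisms $h_s\co\mathcal{R}\to\mathbb{Q}(q^{1/2})$ determined by $h_s(V_2)=q^{s/2}+q^{-s/2}$, and each such $h_s$ kills $P_l$ because $h_s\big(V_2-q^{i+\frac{1}{2}}-q^{-i-\frac{1}{2}}\big)=0$ for $s=2i+1\le 2l-1$. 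With that argument (or with an explicit formula for the coefficients $c^{(l)}_j$ plus a $q$-binomial identity) the proposition follows; without it, your proposal only proves the half of the statement with $l\le m$.
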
 
 Proposition \ref{h3}  implies the following.
 \begin{corollary}[Habiro \cite{H2}]\label{ls3}
 For $l \geq 0,$ we have
\begin{align*}
\tr_q^{\tilde P'_{l}}\big(Z(\muqe)\big)\subset \frac{\{2l+1\}_{q,l+1}}{\{1\}_q}\Z.
\end{align*}
 \end{corollary}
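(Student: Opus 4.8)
The plan is to exploit the fact, recalled immediately before the corollary, that $Z(\muqe)$ is freely spanned over $\Z$ by the monic polynomials $\sigma_p$, $p\geq 0$, in $C^2$, together with the orthogonality relation of Proposition \ref{h3}. First I would rewrite the defining relation $P''_l=q^{l(l+1)}\frac{\{1\}_q}{\{2l+1\}_{q,l+1}}\tilde P'_l$ in the form $\tilde P'_l=\frac{\{2l+1\}_{q,l+1}}{q^{l(l+1)}\{1\}_q}P''_l$, so that the functionals $\tr_q^{\tilde P'_l}$ and $\tr_q^{P''_l}$ differ by exactly this one scalar factor.

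Next I would evaluate the trace on each basis element. By linearity of $\tr_q^{(-)}$ and Proposition \ref{h3}, for every $m\geq 0$ we get $\tr_q^{\tilde P'_l}(\sigma_m)=\frac{\{2l+1\}_{q,l+1}}{q^{l(l+1)}\{1\}_q}\tr_q^{P''_l}(\sigma_m)=\frac{\{2l+1\}_{q,l+1}}{q^{l(l+1)}\{1\}_q}\delta_{l,m}$. The crucial observation is that $q^{l(l+1)}$ is a unit in $\Z$, so this value already lies in $\frac{\{2l+1\}_{q,l+1}}{\{1\}_q}\Z$, and the whole family of traces against $\sigma_m$ is dual to the basis up to this unit.

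Finally, for an arbitrary central element $z\in Z(\muqe)$ I would use freeness to write $z=\sum_{p\geq 0}c_p\sigma_p$ with $c_p\in\Z$, only finitely many nonzero. Applying $\tr_q^{\tilde P'_l}$ and invoking the orthogonality just established, every term with $p\neq l$ vanishes, leaving $\tr_q^{\tilde P'_l}(z)=c_l\,\frac{\{2l+1\}_{q,l+1}}{q^{l(l+1)}\{1\}_q}$. Since $c_l\in\Z$ and $q^{-l(l+1)}$ is a unit, this is a $\Z$-multiple of $\frac{\{2l+1\}_{q,l+1}}{\{1\}_q}$, which is the claim.

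As for difficulty, there is essentially no obstacle here: Proposition \ref{h3} already supplies the full dual-basis structure, and the corollary is a formal consequence. The two points to handle carefully are that $\{\sigma_p\}_{p\geq 0}$ is genuinely a $\Z$-\emph{basis} (not merely a spanning set), so that the coefficients $c_p$ are well-defined integers — this is exactly the freeness of $Z(\muqe)$ over $\Z$ recalled above — and that the factor $q^{l(l+1)}$ relating $P''_l$ to $\tilde P'_l$ is invertible in $\Z$, so that replacing $P''_l$ by $\tilde P'_l$ introduces no new denominator beyond $\{1\}_q$.
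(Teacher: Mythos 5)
Your proposal is correct and is exactly the argument the paper intends: the paper states the corollary as an immediate consequence of Proposition \ref{h3}, using the fact (recalled just before) that $Z(\muqe)$ is freely $\Z$-spanned by the $\sigma_p$, so expanding a central element in this basis, applying the orthogonality $\tr_q^{P''_l}(\sigma_m)=\delta_{l,m}$, and absorbing the unit $q^{l(l+1)}$ gives the claim. Your two points of care (genuine freeness of the basis, and invertibility of $q^{l(l+1)}$ in $\Z$) are precisely the ones that make the deduction work.
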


 Now, we prove Theorem \ref{4}.
\begin{proof}[Proof of Theorem \ref{4}]
For $n\geq 3$, let $L$ be an $n$-component  Brunnian link and $T$ a Brunnian bottom tangle whose closure is $L$.
Let $l_1,\ldots ,l_n\geq 0$.
Without loss of generality, we assume $l_1=\max (l_1,\ldots,l_n)$ and $l_2=\min\{l_i \ | \ 1\leq i\leq n\}$.
By   Proposition \ref{iin} (ii) and  Corollary \ref{ls2}, we have
\begin{align}
\begin{split}\label{la3}
&\{l_2\}_q!(\id\otimes \tr_q^{\tilde P'_{l_2}}\otimes \tr_q^{\tilde P'_{l_3}}\otimes \cdots \otimes \tr_q^{\tilde P'_{l_n}})(J_ T)
\\
&\in(\id\otimes \tr_q^{\tilde P'_{l_3}}\otimes \tr_q^{\tilde P'_{l_3}}\otimes \cdots \otimes \tr_q^{\tilde P'_{l_n}})
((\uqe)^{\otimes n-1})
\\
&\subset \big(\prod_{3\leq i\leq n} I_{i}\big)\cdot \uqe
\\
&\subset \big(\prod_{3\leq i\leq n} I_{i}\big)\cdot \muqe
\\
&=g_{l_3}\cdots g_{l_n} \muqe,
\end{split}
\end{align}
where the last equation is follows from  Proposition \ref{cy}.

Since $\muqe$ has no non-trivial zero divisor, we have
\begin{align}\label{la4}
\big(g_{l_3}\cdots g_{l_n}\muqe\big)\cap Z(\muqe)&\subset  g_{l_3}\cdots g_{l_n}Z(\muqe).
\end{align}
By (\ref{la3}), (\ref{la4}) and Proposition \ref{hc}, we have
\begin{align}\label{la5}
\{l_2\}_q!(\id\otimes \tr_q^{\tilde P'_{l_2}}\otimes \tr_q^{\tilde P'_{l_3}}\otimes \cdots \otimes \tr_q^{\tilde P'_{l_n}})(J_{T})
\subset g_{l_3}\cdots g_{l_n}Z(\muqe).
\end{align}
By (\ref{la5}) and Corollary \ref{ls3}, we have 
\begin{align*}
\{l_2\}_q!J_{L; \tilde P'_{l_1},\ldots , \tilde P'_{l_n}}&=\{l_2\}_q!(\tr_q^{\tilde P'_{l_1}}\otimes \tr_q^{\tilde P'_{l_2}}\otimes \tr_q^{\tilde P'_{l_3}}\otimes \cdots \otimes \tr_q^{\tilde P'_{l_n}})(J_{T})
\\
&\in \tr_q^{\tilde P'_{l_1}}\big(g_{l_3}\cdots g_{l_n}Z(\muqe)\big)
\\
&\subset \frac{\{2l_1+1\}_{q,l_1+1}}{\{1\}_q}g_{l_3}\cdots g_{l_n}\Z
\\
\\&= \frac{\{2l_1+1\}_{q,l_1+1}}{\{1\}_q}\prod_{3\leq i\leq n}I_{l_i}.
\end{align*}
Hence we have 
\begin{align*}
J_{L; \tilde P'_{l_1},\ldots , \tilde P'_{l_n}}&= \frac{\{2l_1+1\}_{q,l_1+1}}{\{1\}_q\{l_2\}_q!}\prod_{3\leq i\leq n}I_{l_i}.
\end{align*}
This completes the proof.
\end{proof}

\begin{acknowledgments}
This work was partially supported by JSPS Research Fellowships for Young Scientists.
The author is deeply grateful to Professor Kazuo Habiro and Professor Tomotada Ohtsuki
for helpful advice and encouragement.
\end{acknowledgments}

\end{document}